\DeclarePairedDelimiter\floor{\lfloor}{\rfloor}
\def\R {\mathbb{R}}
\def\cC {\mathcal{C}}
\def\cI {\mathcal{I}}
\def \rpsi_i {|\psi_i \rangle}
\def \lpsi_i {\langle \psi_i|}
\def \lrpsi_i{\langle \psi_i | \psi_i \rangle}
\def \rpsi_k {|\psi_k \rangle}
\def \lpsi_k {\langle \psi_k|}
\def \lrpsi_k{\langle \psi_k | \psi_k \rangle}
\newcommand{\Id}{\operatorname{Id}}
\renewcommand{\R}{\mathbb R}
\newcommand{\N}{\mathbb N}
\newcommand{\aLip}{\|a\|_{\text{Lip}}}
\newcommand{\phiLip}{\|\phi\|_{\text{Lip}}}
\newcommand{\ba}{\begin{aligned}}
\newcommand{\ea}{\end{aligned}}
\newcommand{\be}{\begin{equation}}
\newcommand{\ee}{\end{equation}}
\declaretheorem[numberwithin = section]{theorem}
\declaretheorem[sibling = theorem]{lemma}
\declaretheorem[sibling = theorem]{corollary}
\declaretheorem[sibling = theorem]{definition}
\declaretheorem[sibling = theorem]{proposition}
\declaretheorem[sibling = theorem]{assumption}
\declaretheorem[sibling = theorem]{remark}
\numberwithin{equation}{section}
\numberwithin{theorem}{section}
\begin{document}

\title[Collective dynamics of the discrete Motsch-Tadmor model]{Mono-cluster flocking and uniform-in-time stability of the discrete Motsch-Tadmor model}

\author[Ha]{Seung-Yeal Ha}
\address[Seung-Yeal Ha]{\newline Department of Mathematical Sciences and Research Institute of Mathematics, \newline
	Seoul National University, Seoul, 08826, Republic of Korea}
\email{syha@snu.ac.kr}

\author[Hoffmann]{Franca Hoffmann}
\address[Franca Hoffmann]{\newline Computing and Mathematical Sciences \newline California Institute of Technology, CA 91125 Pasadena, USA}
\email{franca.hoffmann@caltech.edu}

\author[Kim]{Dohyeon Kim}
\address[Dohyeon Kim]{\newline Computing and Mathematical Sciences \newline California Institute of Technology, CA 91125 Pasadena, USA}
\email{dohyeon@caltech.edu}

\author[Yoon]{Wook Yoon}
\address[Wook Yoon]{\newline Department of Mathematical Sciences \newline Seoul National University, Seoul 08826, Republic of Korea}
\email{ynwk178@snu.ac.kr}

\thanks{\textbf{Acknowledgment.} The work of S.-Y. Ha is supported by the National Research Foundation of Korea (NRF-2020R1A2C3A01003881), and the work of FH and DK is supported by start-up funds at the California Institute of Technology and by NSF CAREER Award 2340762.}

\begin{abstract}
The Motsch-Tadmor (MT) model is a variant of the Cucker-Smale model with a normalized communication weight function. The normalization poses technical challenges in analyzing the collective behavior due to the absence of conservation of momentum. We study three quantitative estimates for the discrete-time MT model considering the first-order Euler discretization. First, we provide a sufficient framework leading to the asymptotic mono-cluster flocking. The proposed framework is given in terms of coupling strength, communication weight function, and initial data. Second, we show that the continuous transition from the discrete MT model to the continuous MT model can be made uniformly in time using the finite-time convergence result and asymptotic flocking estimate. Third, we present uniform-in-time stability estimates for the discrete MT model. We also provide several numerical examples and compare them with analytical results. 
\end{abstract}

\keywords{Collective dynamics, Cucker-Smale model, discrete Motsch-Tadmor Model, mono-cluster flocking}

\subjclass[2020]{34D06, 34D20}

% \date{\today}

\maketitle

\begin{center}
   \textit{To Eitan Tadmor on his 70th birthday, with friendship and admiration.} 
\end{center}

% \tableofcontents

%%%%%%%%%%%%%%%%%%%%%%%%%%%
%
%
%	Section 1
%
%
%%%%%%%%%%%%%%%%%%%%%%%%%%%
\section{Introduction} \label{sec:1}
\setcounter{equation}{0}
Emergent collective behaviors are ubiquitous in many biological complex systems such as aggregation of bacteria, flocking of birds, swarming of fish, synchronization of fireflies, and herding of sheep, or others. Among such systems, we are mainly interested in the flocking in which self-organized particles (agents) evolve into an ordered motion with the same velocity using limited environmental information and communication between particles. To model such flocking behaviors, Cucker and Smale \cite{CS-1,CS} introduced an analytically treatable particle model that resembles the Newton-type $N$-body system with the weighted average of relative velocities as an internal force. Let us first introduce the key components of the Cucker-Smale (CS) model, which will then lead us to its variant, the Motsch-Tadmor (MT) model.

Let $x_i$ and $v_i$ be the spatial position and velocity of the $i$-th particle. Then, the dynamics of the CS particle are governed by the following first-order system of ordinary differential equations for $(x_i, v_i)$:
\begin{equation}
\begin{cases} \label{eq: CS}
 \displaystyle {\dot x}_i  = v_i\,, \quad t >0,~~ i \in [N]:= \{1, \cdots, N \},  \\
 \displaystyle {\dot v}_i  = \frac{\kappa}{N} \sum_{j = 1}^N a(\|x_j - x_i\|) (v_j - v_i)\,.
\end{cases}
\end{equation}
Here, $\kappa > 0$ and $N$ denote the uniform coupling strength and system size, respectively, and the scalar quantity $a_{ij} = a(\|x_j - x_i\|)$ represents the communication weight between the $i$-th and $j$-th particles. 
For instance in \cite{CS}, the following explicit form of communication weight function was used:
\[
    a(r) = \frac{1}{(1 + r^2)^{\beta}}\quad \text{ for some } \beta \geq 0\,.
\]
The CS model \eqref{eq: CS} has been extensively studied from various perspectives, to name a few, stochastic effects \cite{10.1063/1.3496895,Ha2009EmergenceOT}, collective dynamics of an infinite CS ensemble \cite{B, BM, WX22, WX23, WX23-1,WX24},
asymptotic flocking dynamics of the kinetic CS model 
\cite{CFRT}, time-delay \cite{CHOI201849}, general and specific network topologies \cite{LX, DQ,S}, rigorous mean-field limit \cite{H-Liu} and external forcing \cite{ST}, etc. Right after Cucker-Smale's seminal work \cite{CS}, Motsch and Tadmor \cite{Motsch_2011} introduced a generalized CS model by replacing the factor $N$ appearing in the momentum equation $\eqref{eq: CS}_2$ by the sum of weights exerted on the $i$-th particle:  

\begin{equation}
\begin{cases} \label{eq: conti_MT}
\displaystyle {\dot x}_i = v_i, \quad t >0,~~i \in [N], \\
 \displaystyle {\dot v}_i =  \kappa \sum_{j = 1}^N \phi_{ij}(v_j - v_i),
\end{cases}
\end{equation}
where the normalized communication weight function $\phi_{ij}$ is given by 
\[ 
\displaystyle {\phi}_{ij} := \frac{a(\|x_i - x_j \|)}{\sum_{k=1}^{N} a(\|x_i - x_k \|)}.
 \] 
 Then, it is easy to see that $\phi_{ij}$ is non-negative, unit sum with respect to the second index, and non-symmetric in indices $i$ and $j$: there may exist $i_0, j_0$ such that $\phi_{i_0j_0} \neq \phi_{j_0i_0}$ and
 \[ \phi_{ij} \geq 0, \quad \sum_{j=1}^{N} \phi_{ij} = 1, \quad \forall~i, j \in [N]. \]
From now on, we call the system \eqref{eq: conti_MT} as the Motsch-Tadmor model (see also \cref{sec:2.1}). Compared to the vast aforementioned literature for the CS model, the emergent dynamics of \eqref{eq: conti_MT} have been investigated only in recent literature, e.g., mono-cluster and multi-cluster flocking \cite{jin2018flocking,liu2015motsch, Motsch_2011, MT14, HJZ} and collective dynamics of an infinite MT ensemble \cite{HWX}.
 
In general, to derive a numerical scheme from the continuous model, we often implement a suitable time-discretization. Given that, we employ the first-order forward Euler scheme with a time-step $h = \Delta t$, and replace derivatives ${\dot x}_i$ and ${\dot v}_i$ by the corresponding Newton quotients in this work: 
 \begin{equation}\label{eq: discrete_MT}
	\begin{cases}
		\displaystyle x_i(n+1) = x_i(n)+hv_i(n)\,, \quad n = 0, 1, \ldots, \\
		\displaystyle v_i(n+1) = v_i(n)+h \kappa\sum_{j=1}^{N} {\phi}_{ij}(n) \big(v_j(n)-v_i(n)\big),
	\end{cases}
\end{equation}
where the normalized communication weight $\phi_{ij}(n)$ is given by
\begin{equation} \label{New-1-1}
{\phi}_{ij}(n) := \frac{a(\|x_i(n) - x_j(n) \|)}{\sum_{k=1}^{N} a( \|x_i(n) - x_k(n) \|)}.
\end{equation}
Then, it is easy to see that 
\[ \sum_{j=1}^{N} {\phi}_{ij}(n)=1 \quad \mbox{and} \quad  \sum_{j \neq i} \phi_{ij}(n) \leq 1. \] 
In this setting, the following are three questions that we address in this paper: 
\begin{itemize}
\item
(Flocking dynamics):~Does the proposed discrete model \eqref{eq: discrete_MT} exhibit emergent behaviors?  More precisely, what conditions for system parameters and initial data are needed to show the emergent flocking dynamics?
\vspace{0.2cm}
\item
(Uniform-in-time transition):  Formally, the discrete model \eqref{eq: discrete_MT} converges to the continuous model \eqref{eq: conti_MT} as the time-step tends to zero. Can we verify this smooth transition uniformly in time?
\vspace{0.2cm}
\item
(Uniform-in-time stability):~Can we bound the distance between two solutions of the discrete model \eqref{eq: discrete_MT} with that of two initial data vectors uniformly in time?
\end{itemize}
The main purpose of this paper is to provide affirmative answers to the aforementioned questions. Before we move on to the descriptions of the main results, we introduce several handy notations for the analysis. These will be used throughout the paper. For $\forall~i, j \in [N]$, we set 
\begin{equation}
\begin{cases} \label{New-1}
 \displaystyle x_i = (x_i^1, \ldots, x_i^d), \quad v_i = (v_i^1, \ldots, v_i^d), \quad X:= \{ x_i \}_{i=1}^N, \quad V:= \{v_i \}_{i=1}^N, \\
 \displaystyle \|x_i \| := \Big( \sum_{k = 1}^d |x_i^k|^2 \Big)^{\frac{1}{2}}, \quad \|v_i \| := \Big( \sum_{k = 1}^d |v_i^k|^2 \Big)^{\frac{1}{2}}, \\
 \displaystyle  \Delta^x_{ij} := x_i - x_j,  \quad  \Delta^v_{ij} := v_i - v_j, \quad  \Delta^x := [\Delta^x_{ij}], \quad  \Delta^v := [\Delta^v_{ij}], \\
 \displaystyle  \|\Delta^x\|_F :=\sqrt{ \sum_{i,j=1}^{N} \|\Delta^x_{ij} \|^2}\,,\quad \|\Delta^v\|_F:=\sqrt{ \sum_{i,j=1}^{N} \|\Delta^v_{ij} \|^2}.
 \end{cases}
\end{equation}

The main results of this paper are three-fold. First, we show that the discrete model \eqref{eq: discrete_MT} exhibits a mono-cluster flocking under a suitable set of conditions using the nonlinear functional approach. More precisely, let $(X, V)=(X(n),V(n))_{n\in\N}$ be a global solution to \eqref{eq: discrete_MT}. Then,  the pair of scalar-valued functionals $( \|\Delta^x(n)\|_F, \|\Delta^v(n)\|^2_F)$ satisfies the following set of recursive inequalities (see \cref{prop: delta_xv_recurrence}):
 \[
    \begin{cases} 
    \displaystyle \|\Delta^x(n+1)\|_F \leq \|\Delta^x(n)\|_F + h\|\Delta^v(n)\|_F\,, \\
    \displaystyle \|\Delta^v(n+1)\|_F \leq \left[  1- h\kappa \left( 1 - \| \phi \|_{\text{Lip}}N \|\Delta^x(n)\|_F \right)   \right ] \|\Delta^v(n)\|_F\,,
    \end{cases}
 \]
Based on these relations, the mono-cluster flocking estimate follows (see \cref{thm: flocking}): If  system parameters and initial data satisfy 
\[
\|\Delta^x(0)\|_F <  \frac{1}{4N \|\phi \|_{\text{Lip}}} =: M \quad \mbox{and} \quad  \|\Delta^v(0)\|_F < \kappa \int_{\|\Delta^x(0)\|_F}^M (1 - \| \phi \|_{\text{Lip}} N s) ds,
\]
then we have asymptotic mono-cluster flocking: there exists a positive constant $C$ such that 
\[ \sup_{0 \leq n <\infty}\|\Delta^x(n)\|_F \leq M \quad \mbox{and} \quad \|\Delta^v(n)\|_F\leq\|\Delta^v(0)\|_F  e^{-C \kappa (1 - \| \phi \|_{\text{Lip}} N M)nh}, \quad \forall~n \geq 0.
\]
\noindent Second, we are concerned with the uniform-in-time transition from the discrete model to the continuous model as $h \to 0$ using a finite-time convergence result and asymptotic flocking estimate. Let $(X, V)$ and $(X^h, V^h)$ be global solutions to the continuous and discrete models, respectively, with the same initial data. Then we have  for some constant $C>0$,
   \[ \limsup_{h \rightarrow 0} \sup_{0 \leq n < +\infty}  \|\Delta^{x,h}(n) - \Delta^x(nh)\|_F \le C\,,\quad
  \limsup_{h \rightarrow 0} \sup_{0 \leq n < +\infty}   \|V^h(n) - V(nh)\|  = 0\,,
 \]
 where $\Delta^{x,h}$ and $\Delta^{x}$ refer to the shape discrepancies in positions for the discrete and continuum evolutions, respectively.
We refer to \cref{T3.10} for details. \newline

\noindent Third, we present a version of uniform-in-time stability of the discrete solution under the following sufficient conditions on initial data, coupling strength, and communication weight function:
\[
\begin{cases}
\displaystyle \max\left\{\|\Delta^x(0)\|_F,\|\Delta^{\overline{x}}(0)\|_F\right\} < M,\\
\displaystyle \|\Delta^v(0)\|_F < \kappa \int_{\|\Delta^x(0)\|_F}^M \psi(s) ds, \quad  \|\Delta^{\overline{v}}(0)\|_F < \kappa \int_{\|\Delta^{\overline{x}}(0)\|_F}^M  \psi(s) ds.
\end{cases}
\]
Under the above conditions, we show that there exists a constant $C_0>0$ only depending on initial data such that for small $h>0$,
\begin{align}\label{eq:stability}
    &\left\|\Delta^x(n)-\Delta^{\overline{x}}(n)\right\|_F + \left\|\Delta^v(n)-\Delta^{\overline{v}}(n)\right\|_F\notag
    \\    &\hspace{2cm}
    \leq \left\|\Delta^x(0)-\Delta^{\overline{x}}(0)\right\|_F + \left\|\Delta^v(0)-\Delta^{\overline{v}}(0)\right\|_F + C(n,h)\,,
    \end{align}
where $\lim_{h\to0}C(n,h) = C_0$ for all $n\ge 0$. 
Throughout this paper, we refer to $\|\Delta^x(n)-\Delta^{\overline{x}}(n)\|_F$ and $\|\Delta^v(n)-\Delta^{\overline{v}}(n)\|_F$ as a \emph{shape discrepancy functional} (in position and velocity, respectively) of $(X(n),V(n))$ and $(\overline{X}(n), \overline{V}(n))$ since both measure how far the particles are from one another as an ensemble (in terms of position or velocity) at the $n$th step. If the shape discrepancy is zero, then two configurations coincide exactly, and shape discrepancy is invariant under rigid body motion. 
Note that \eqref{eq:stability} is different from what is usually referred to as a stability estimate as we are considering shape discrepancy rather than the difference between two solutions. This is analogous to orbital stability in hyperbolic conservation laws; see \cref{sec:4} for details. \\

% \vspace{0.5cm}

The rest of this paper is organized as follows. In \cref{sec:2}, we briefly discuss motivations for the MT model and recall previous results on the emergent dynamics of the continuous MT model. In \cref{sec:3}, we study asymptotic mono-cluster flocking of the discrete MT model, and using this flocking estimate and finite-time results on the transition from the discrete model to the continuous model, we verify that the continuous transition can be made uniformly in time as the time-step tends to zero. In \cref{sec:4}, we present the rigorous statement and proof of the uniform-in-time stability result \eqref{eq:stability}.
In \cref{sec:5}, we provide several numerical examples for the discrete MT model to illustrate the analytical results obtained in previous sections. \cref{sec:6} concludes with a brief summary of our main results. 

%%%%%%%%%%%%%%%%%%%%%%%%%%%
%
%
%	Section 2
%
%
%%%%%%%%%%%%%%%%%%%%%%%%%%%
\section{Preliminaries} \label{sec:2}
\setcounter{equation}{0}
In this section, we briefly review how the MT model \eqref{eq: conti_MT} can be motivated from the Cucker-Smale model \eqref{eq: CS}, and summarize previous results on the emergent behaviors of the model \eqref{eq: conti_MT}. 

\subsection{From the CS model to the MT model} \label{sec:2.1}
In this subsection, we discuss the motivation for the normalized communication weights in the MT model. For this, we begin with the CS model:
\begin{equation} 
\begin{cases} \label{B-0}
 \displaystyle {\dot x}_i = v_i, \quad i \in [N], \\
 \vspace{0.2cm}
 \displaystyle {\dot v}_i =  \kappa \sum_{j = 1}^N \frac{a_{ij}}{N} (v_j - v_i),
\end{cases}
\end{equation}
where $a_{ij}$ is the pairwise communication weight depending only on the distance between particles $x_i$ and $x_j$:
\[ a_{ij} := a( \|x_i - x_j \|), \quad \forall~i, j \in [N]. \]
In what follows, we consider a situation in which two groups of agents are separated far apart from each other, and the number of agents occupying the region of each flocking group varies widely in size, one being significantly larger than the other. In this case, the communication weight between two agents, one in the smaller group and another in the larger group will be close to zero. Moreover, for agent $i$ in the smaller group, we can observe
 \[
{\dot v}_i = \frac{\kappa}{N} \sum_{j = 1}^N a_{ij} (v_j - v_i) \approx \frac{\kappa}{N} \sum_{j \in \text{ smaller group}} a_{ij} (v_j - v_i)\,,
\]
where $N$ is the total system size which is the sum of the sizes of the smaller group and the larger group. In the case of $N \gg 1$, the dynamics for such particles (agents) in a smaller group will be described as being almost static $\frac{dv_i}{dt}  \approx 0$. Thus, the model  \eqref{eq: CS} does not capture the actual phenomenon accurately. To overcome this limitation in  \eqref{eq: CS}, Motsch and Tadmor proposed a slightly modified variant \eqref{eq: conti_MT}, which we will refer to as the Motsch-Tadmor (MT) model throughout this paper. By normalizing the communication weight $a_{ij}$, the new resulting model can now describe the above setting more accurately compared to the CS model. 
As a trade-off, the communication weight matrix $(\phi_{ij})$ loses the symmetry. In the MT model, the symmetric communication weight $a_{ij}$ is now replaced by the normalized one
\[ \phi_{ij} := \frac{a_{ij}}{\sum_{k = 1}^N a_{ik}}, \quad \forall~ i, j \in [N]\,, \]
leading to the continuous MT model:
\begin{equation} 
\begin{cases} \label{B-0-0-0}
 \displaystyle {\dot x}_i = v_i, \quad i \in [N]\,,\\
 \vspace{0.2cm}
 \displaystyle {\dot v}_i =  \kappa \sum_{j = 1}^N \phi_{ij}(v_j - v_i)\,.
\end{cases}
\end{equation}
Now, we apply the first-order forward Euler scheme to \eqref{B-0-0-0} with $h = \Delta t$: for $t = n h$
\[ 
( {\dot x}_i(t), {\dot v}_i(t)) \quad \Longrightarrow \quad \Big( \frac{x_i(n+1) - x_i(n)}{h}, \frac{v_i(n+1) - v_i(n)}{h} \Big)
\]
to derive the discrete counterpart. Note that the continuous and discrete models  \eqref{eq: conti_MT} and \eqref{eq: discrete_MT} do not have obvious conserved quantities, except the total number of particles. This lack of conservation laws causes lots of technical difficulties in the analysis. \newline

Next, we explain why the rigorous flocking analysis for the Cucker-Smale model based on energy estimates is difficult to translate to the discrete and continuuum Motsch-Tadmor models.  To see this, we first begin with the flocking analysis for the CS model. We sum up $\eqref{B-0}_2$ over all $i \in [N]$, and use the index exchange transformation $i~\leftrightarrow~j$ to find 
\begin{equation} \label{B-0-0-1}
\frac{d}{dt} \sum_{i=1}^{N} v_i  =\frac{\kappa}{2N} \sum_{i, j = 1}^N (a_{ij} - a_{ji}) (v_j - v_i).
\end{equation} 
Then, we use the symmetry $a_{ij} = a_{ji}$ to derive the conservation of total momentum:
\begin{equation} \label{B-0-0-2}
\frac{d}{dt} \sum_{i=1}^{N} v_i = 0, \quad t >0.
\end{equation}
Secondly, one can take the inner product $\eqref{B-0}_2$ with $2v_i$ and sum up the resulting relation over $i$ using the symmetry $a_{ij} = a_{ji}$ to derive an energy dissipation estimate:
\begin{equation} \label{B-0-1}
\frac{d}{dt} \Big( \sum_{i=1}^{N} \|v_i \|^2 \Big) = -\frac{\kappa}{N} \sum_{i, j = 1}^N a_{ij} \|v_j - v_i \|^2 \leq 0. 
\end{equation}
These estimates imply exponential velocity alignment for the fluctuation around the average initial velocity under suitable conditions on $a$ and the initial configuration: there exists a positive constant $\Lambda$ such that 
\begin{equation} \label{B-0-2}
\Big \|  v_i(t) - \frac{1}{N} \sum_{j=1}^{N} v_j(0) \Big\| \lesssim e^{-\Lambda t} \Big \|  v_i(0) - \frac{1}{N} \sum_{j=1}^{N} v_j(0) \Big\|, \quad \mbox{as $t \to \infty$}, \quad \forall~i \in [N].
\end{equation}
For details on how to derive this exponential decay from \eqref{B-0-1} see
\cite{Seung-Yeal-Ha-2008-Kinetic-and-Related-Models}.
Obviously, this exponential velocity alignment also implies spatial cohesiveness:
\begin{equation} \label{B-0-3}
\sup_{0 \leq t < \infty} \Big \|x_i(t) - \frac{1}{N} \sum_{j=1}^{N} x_j(0) \Big\| < \infty, \quad \forall~i \in [N].
\end{equation}
Next, we return to the MT model \eqref{B-0-0-0}.  Unlike the CS model, the MT model does not preserve the total momentum due to the non-symmetric property of $\phi$: in general,
\[ \frac{d}{dt} \sum_{i=1}^{N} v_i(t) \neq 0, \quad t > 0.   \]
Hence, these energy estimates cannot be used in the MT model. This is one of the main technical difficulties for flocking analysis. Before we close this subsection, we list several assumptions on $a$ and the time-step $h$ used throughout the paper: 
\begin{assumption}\label{as: a}  
Suppose that the unnormalized communication weight $a(r)$ and the time-step $h$ satisfy the following structural conditions:
\begin{enumerate} 
    \item There exists positive constants $c_1, c_2$ such that
    \begin{equation}\label{eq: phi_bound}
        c_1 \leq a(r) \leq c_2, \quad \forall~r \geq 0. 
    \end{equation}
    \item There exists a positive Lipschitz constant $L_{a}$ such that
    \begin{equation}\label{eq: phi_lip}
        |a(r_1) -a(r_2) | \leq L_{a}|r_1 - r_2|, \quad r_1, r_2 \geq 0.
    \end{equation}
     \item The time-step $h$ is sufficiently small so that 
     \begin{equation*}
        0 < h < \min \Big \{1, \frac{1}{\kappa} \Big \}
        ,
    \end{equation*}
    where $\kappa$ is the coupling strength in \eqref{eq: discrete_MT}.
    %\\
    %\item 
\end{enumerate}
\end{assumption}
\begin{remark}\label{rmk: as_a}
Below, we comment on the conditions in Assumption~\ref{as: a} and put them in context to the current state-of-the-art. Conditions (1) and (2) can be compared to those in \cite{Ha_Zhang_CS_continuous_transition} for the original Cucker-Smale model. More precisely, in \cite{Ha_Zhang_CS_continuous_transition} the authors assumed $0< a(r) \leq 1$ in order to show a uniform-in-time transition from the discrete to continuum Cucker-Smale model. In \cite{Ha2009EmergenceOT}, which is concerned with a stochastic version of the Cucker-Smale model, $a(r)$ is just assumed to be non-negative, non-increasing, and radially symmetric. 
Further, monotonicity (or being non-increasing) is a common assumption on $a(r)$ \cite{Motsch_2011, MT14, Ha2009EmergenceOT}. We can think of monotonicity as a special case of almost everywhere Lipschitz continuity. This is because a monotonic function is differentiable almost everywhere with a derivative bounded almost everywhere over a bounded domain; hence in the setting where the spatial diameter is bounded (as we will show in our flocking result), a monotone $a(r)$ can be considered Lipschitz almost everywhere. Condition (2) is also required to guarantee the well-posedness of the model and can be considered a standard assumption \cite{Amann+1990, doi:10.1137/1009057, doi:10.1137/1.9780898719222}.
In some sense, Condition (1) can be viewed as corresponding to the variable all-to-all communication weight case which is always long-ranged. Whether we can relax Condition (1) as is the case for the original CS model remains a challenging open problem.
Condition (3) is not very restrictive, as it is always possible to take small enough time steps $h$.
\end{remark}

%%%%%%%%%%%%%%%%%%%%
%
%
%
%%%%%%%%%%%%%%%%%%%%%%
\subsection{Previous results}\label{sec:2.2}
In this subsection, we review the emergent dynamics of the continuous MT model \eqref{B-0-0-0} for comparison with the new flocking estimate for the discrete MT model in the next section. Although the energy estimates for the MT model cannot be employed due to a lack of conservation laws as explained in the previous subsection, we can still use the nonlinear functional approach as applied in \cite{H-Liu}.

For a given configuration $\{(x_i, v_i) \}$, we define spatial and velocity diameters:
\[ {\mathcal D}(X):= \max_{1 \leq i, j \leq N} \|x_i - x_j\|, \quad {\mathcal D}(V):=  \max_{1 \leq i, j \leq N} \|v_i - v_j \|. \]
Next, we recall the concept of the mono-cluster flocking as follows.
\begin{definition}
Let ${\mathcal Z} := \{(x_i(t), v_i(t) \}_{i = 1}^N$ be a global solution to \eqref{B-0-0-0}. Then, configuration ${\mathcal Z}$ exhibits an asymptotic mono-cluster flocking if and only if the following two conditions hold.
\begin{equation} \label{B-0-4}
 \sup_{0 \leq t < \infty} {\mathcal D}(X(t)) < \infty \quad \mbox{and} \quad   \lim_{t \rightarrow \infty} {\mathcal D}(V(t)) =0.
 \end{equation}
\end{definition}
By following calculations in \cite{Motsch_2011, MT14},
we can show these diameters satisfy a system of dissipative differential inequalities (SDDI):
\begin{equation}
\begin{cases} \label{B-0-5}
\displaystyle \Big| \frac{d}{dt} {\mathcal D}(X(t)) \Big| \leq  {\mathcal D}(V(t)),  \quad t > 0, \\
\vspace{0.2cm}
\displaystyle \frac{d}{dt} {\mathcal D}(V(t)) \leq - \kappa a( {\mathcal D}(X(t))) {\mathcal D}(V(t)).
 \end{cases}
 \end{equation}
 Note that once we can show the uniform boundedness of ${\mathcal D}(X)$, the differential inequality $\eqref{B-0-5}_2$ yields the exponential decay of  ${\mathcal D}(V)$. To derive the estimates \eqref{B-0-3} for the continuous MT model, we introduce nonlinear functionals ${\mathcal H}^{\pm}$:
 \begin{equation} \label{eq: lyapunov_H_MT}  
 {\mathcal H}^{\pm}(t) := {\mathcal D}(V(t)) \pm \kappa \int_{ 0}^{{\mathcal D}(X(t))} a(\eta) d\eta, \quad t \geq 0,
  \end{equation}
  where we used the following notation for initial configurations:
  \[ X^0 := X(0), \quad  V^0 := V(0). \]
 Since diameter functionals are Lipschitz continuous, $\mathcal{H}^{\pm}$ is Lipschitz continuous as well. Hence, they are almost differentiable and satisfy the stability estimate:
 \[
 \mathcal{H}^{\pm}(t) \leq \mathcal{H}^{\pm}(0), \quad t \geq 0,
 \]
which implies (following a similar argument as in \cite[Lemma 3.1]{H-Liu}):
 \begin{equation*} \label{B-0-6}
 {\mathcal D}(V(t)) +  \kappa \int_0^{{\mathcal D}(X(t))} a(\eta) d\eta   \leq {\mathcal D}(V^0), \quad t \geq 0.
 \end{equation*}

 Now we recall the results in \cite{LX, Motsch_2011, MT14} on the emergence of mono-cluster flocking. 
\begin{theorem}
\emph{ \cite{LX, Motsch_2011,MT14}} \label{thm: conti_flocking}
Suppose that initial data and system parameters satisfy either one of the following conditions: 
\begin{enumerate}
\item
\mbox{either} 
\begin{equation} \label{B-1}
 {\mathcal D}(V^0) \leq \kappa \int_{{\mathcal D}(X^0) }^{\infty} a(s)ds, 
 \end{equation}
\item
\mbox{or}
\begin{equation} \label{B-2}
a(s) = (1+s)^{-\beta}, ~ \beta \geq 0 \quad \mbox{and} \quad {\mathcal D}(V^0) \leq \frac{\kappa}{N} \int_{{\mathcal D}(X^0)}^{\infty} \phi(s)ds,
\end{equation}
\end{enumerate}
and let $\{ (x_{i}, v_{i}) \}$ be a global solution of \eqref{B-0-0-0}. Then, mono-cluster flocking emerges asymptotically, i.e., there exists ${\mathcal D}^{\infty}$ such that 
\begin{equation} \label{B-3}
\sup_{0 \leq t < \infty}  {\mathcal D}(X(t)) \leq {\mathcal D}^{\infty}, \quad {\mathcal D}(V(t)) \leq {\mathcal D}(V^0) \exp \Big( -\kappa a({\mathcal D}^{\infty}) t \Big), \quad \forall~t \geq 0. 
\end{equation}
\end{theorem}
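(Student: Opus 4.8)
The plan is to combine the two ingredients already assembled before the statement: the system of dissipative differential inequalities \eqref{B-0-5}, and the Lyapunov monotonicity $\mathcal{H}^+(t) \le \mathcal{H}^+(0)$ for the functional \eqref{eq: lyapunov_H_MT}. Expanding $\mathcal{H}^+$ and moving the initial primitive $\kappa\int_0^{\mathcal{D}(X^0)} a$ to the right-hand side gives the a priori bound
\[ \mathcal{D}(V(t)) + \kappa \int_{\mathcal{D}(X^0)}^{\mathcal{D}(X(t))} a(\eta)\,d\eta \le \mathcal{D}(V^0), \quad t \ge 0, \]
which couples the growth of the spatial diameter to the dissipating velocity diameter. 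Since $\mathcal{D}(V(t)) \ge 0$, this in particular yields $\kappa \int_{\mathcal{D}(X^0)}^{\mathcal{D}(X(t))} a(\eta)\,d\eta \le \mathcal{D}(V^0)$ for all $t$. The argument then reduces to (i) turning this into a uniform confinement $\sup_{t\ge 0}\mathcal{D}(X(t)) \le \mathcal{D}^\infty$, and (ii) feeding the confinement back into $\eqref{B-0-5}_2$ to obtain exponential velocity alignment.

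For step (i) under hypothesis \eqref{B-1}, I would introduce the strictly increasing primitive $G(r) := \kappa \int_{\mathcal{D}(X^0)}^r a(\eta)\,d\eta$ (strict monotonicity since $a \ge c_1 > 0$) and define $\mathcal{D}^\infty$ as the unique root of $G(\mathcal{D}^\infty) = \mathcal{D}(V^0)$. Such a root exists and is finite because \eqref{B-1} guarantees $\mathcal{D}(V^0) \le G(\infty) = \kappa\int_{\mathcal{D}(X^0)}^\infty a$; in the present paper's setting $a \ge c_1$ forces $G(\infty)=+\infty$, so finiteness is automatic. The a priori bound then reads $G(\mathcal{D}(X(t))) \le \mathcal{D}(V^0) = G(\mathcal{D}^\infty)$, and strict monotonicity of $G$ gives $\mathcal{D}(X(t)) \le \mathcal{D}^\infty$ for every $t$ (the case $\mathcal{D}(X(t)) < \mathcal{D}(X^0) \le \mathcal{D}^\infty$ being trivial). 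Note that no bootstrap or continuation argument is needed, since the confinement is a pointwise consequence of the Lyapunov inequality.

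For step (ii), using that $a$ is non-increasing (the standard monotonicity hypothesis under which this theorem is stated, cf. \cref{rmk: as_a}) together with $\mathcal{D}(X(t)) \le \mathcal{D}^\infty$ gives $a(\mathcal{D}(X(t))) \ge a(\mathcal{D}^\infty)$, so $\eqref{B-0-5}_2$ becomes the linear differential inequality $\tfrac{d}{dt}\mathcal{D}(V(t)) \le -\kappa a(\mathcal{D}^\infty)\mathcal{D}(V(t))$; Grönwall's lemma then delivers $\mathcal{D}(V(t)) \le \mathcal{D}(V^0)e^{-\kappa a(\mathcal{D}^\infty)t}$, which is exactly \eqref{B-3}. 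The alternative hypothesis \eqref{B-2} is handled identically after replacing the plain weight $a$ in the dissipation term of the SDDI by its sharper normalized lower bound, which for the explicit kernel $a(s)=(1+s)^{-\beta}$ produces the factor $\tfrac{\kappa}{N}$ and the integral of $\phi$; the Lyapunov functional and the confinement-then-decay scheme are otherwise unchanged.

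The main obstacle lies not in these algebraic manipulations but in the two facts imported as ``given'': first, the derivation of the SDDI \eqref{B-0-5} from the particle system, where $\mathcal{D}(X)$ and $\mathcal{D}(V)$ are only Lipschitz, being maxima of finitely many smooth functions, so the inequalities must be read in the sense of the upper Dini derivative and one must track which index pair realizes the maximum; and second, the justification of $\mathcal{H}^{\pm}(t) \le \mathcal{H}^{\pm}(0)$, which requires the chain rule for the composition of $a$ with a Lipschitz function together with an a.e.-differentiability argument. A secondary subtlety is the borderline case of \eqref{B-1} with $\int^\infty a < \infty$ and equality, where $\mathcal{D}^\infty$ may degenerate; this does not arise here, since \cref{as: a} forces $a \ge c_1 > 0$ and hence $G(\infty) = +\infty$.
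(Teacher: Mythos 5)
Your proposal is correct and takes essentially the same route as the paper: the paper does not prove \cref{thm: conti_flocking} itself (it recalls it from \cite{LX, Motsch_2011, MT14}), but its surrounding discussion in \cref{sec:2.2} — the SDDI \eqref{B-0-5} combined with the monotonicity $\mathcal{H}^{\pm}(t) \leq \mathcal{H}^{\pm}(0)$ of the functionals \eqref{eq: lyapunov_H_MT}, yielding uniform confinement of $\mathcal{D}(X)$ and then Gr\"onwall decay of $\mathcal{D}(V)$ — is exactly the argument you flesh out. Your definition of $\mathcal{D}^{\infty}$ as the root of the primitive $G$, your handling of the borderline equality case, and your remark that the SDDI and the Lyapunov monotonicity must be read via Dini derivatives are all consistent with the cited proofs.
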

%%%
\begin{corollary}\label{cor: continuous_v_limit}
    Under the same setting as in \cref{thm: conti_flocking}, there exists a set of asymptotic velocities $\{v_i^{\infty} \}$ such that 
  \[  v_i^{\infty} := \lim_{t \rightarrow \infty} v_i(t) \quad \text{ with } \quad \|v_i^\infty\|< \infty, \quad \forall~i \in [N]\,,\]
  where \begin{align}\label{eq: conti_velocity}
      \| v_i(t)-v_i^{\infty}  \| \leq \frac{N \mathcal{D}(V(0))}{ a(\mathcal{D}^{\infty})}\, e^{- \kappa a(\mathcal{D}^{\infty}) t}\,.
  \end{align}
\end{corollary}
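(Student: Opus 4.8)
The plan is to deduce the convergence of each individual velocity $v_i(t)$ directly from the exponential decay of the velocity diameter $\mathcal{D}(V(t))$ supplied by \cref{thm: conti_flocking}, via a Cauchy-criterion argument applied to the trajectory $t \mapsto v_i(t)$. The key observation is that the asymptotic flocking estimate already controls $\mathcal{D}(V(t))$, and since $\|\dot v_i\|$ is controlled by $\mathcal{D}(V(t))$, the derivative is integrable in time, which forces convergence.

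First I would bound the velocity derivative pointwise in time. Taking norms in $\eqref{B-0-0-0}_2$ and using the triangle inequality together with the nonnegativity of the weights $\phi_{ij}$, I obtain
\[ \|\dot{v}_i(t)\| \leq \kappa \sum_{j=1}^N \phi_{ij}(t) \|v_j(t) - v_i(t)\| \leq \kappa\, \mathcal{D}(V(t)), \]
where in the last step I used $\|v_j - v_i\| \leq \mathcal{D}(V(t))$ and the row-stochasticity $\sum_{j} \phi_{ij} = 1$. (If one prefers to reproduce the stated constant exactly, one may instead bound $\phi_{ij} \leq 1$ and sum the $N$ terms to pick up the factor $N$; the row-stochastic bound is in fact sharper.) Combining this with the flocking estimate $\eqref{B-3}$, namely $\mathcal{D}(V(t)) \leq \mathcal{D}(V^0)\, e^{-\kappa a(\mathcal{D}^\infty)t}$, gives the integrable decay
\[ \|\dot{v}_i(t)\| \leq \kappa\, \mathcal{D}(V^0)\, e^{-\kappa a(\mathcal{D}^\infty) t}, \quad \forall~t \geq 0. \]

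Next I would establish existence of the limit. For $t_2 > t_1 \geq 0$, writing $v_i(t_2) - v_i(t_1) = \int_{t_1}^{t_2} \dot{v}_i(s)\,ds$ and inserting the bound above yields
\[ \|v_i(t_2) - v_i(t_1)\| \leq \kappa\, \mathcal{D}(V^0) \int_{t_1}^{t_2} e^{-\kappa a(\mathcal{D}^\infty)s}\,ds \longrightarrow 0 \quad \text{as } t_1, t_2 \to \infty, \]
so $\{v_i(t)\}$ satisfies the Cauchy criterion and converges to some $v_i^\infty$; its finiteness follows at once from $\|v_i^\infty\| \leq \|v_i(0)\| + \int_0^\infty \|\dot{v}_i(s)\|\,ds < \infty$. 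Finally, to obtain the quantitative rate I would let $t_2 \to \infty$ with $t_1 = t$ fixed and evaluate the elementary integral $\int_t^\infty e^{-\kappa a(\mathcal{D}^\infty)s}\,ds = e^{-\kappa a(\mathcal{D}^\infty)t}/(\kappa a(\mathcal{D}^\infty))$, producing $\|v_i(t) - v_i^\infty\| \leq \frac{\mathcal{D}(V^0)}{a(\mathcal{D}^\infty)}\, e^{-\kappa a(\mathcal{D}^\infty)t}$, and hence a fortiori the stated bound (since $N \geq 1$).

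As for the main obstacle: there is essentially no deep difficulty, since all the hard analysis is already encapsulated in \cref{thm: conti_flocking}. The only points requiring care are the use of the row-stochasticity $\sum_j \phi_{ij} = 1$ to avoid an unnecessary $N$-dependence (or, alternatively, the cruder $\phi_{ij}\le 1$ bound to match the stated constant), and the justification that the Cauchy criterion applies, which hinges precisely on the integrability of the exponential bound on $\|\dot{v}_i\|$. Strictly speaking one needs $a(\mathcal{D}^\infty) > 0$ for the integrals to converge; this is guaranteed by the lower bound $a \geq c_1 > 0$ in \cref{as: a}.
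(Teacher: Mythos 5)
Your proposal is correct and follows essentially the same route as the paper: both arguments integrate the velocity equation, bound the integrand by $\kappa\,\mathcal{D}(V(t))$, and use the exponential decay from \cref{thm: conti_flocking} to deduce convergence of the tail integral and the quantitative rate (the paper defines $v_i^\infty$ directly via an absolutely convergent improper integral, while you phrase it as a Cauchy criterion, which is the same mechanism). Your use of the row-stochasticity $\sum_j \phi_{ij}=1$, rather than the paper's cruder bound $\phi_{ij}\le 1$ summed over $N$ terms, even yields a constant sharper by a factor of $N$, from which the stated estimate follows a fortiori.
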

\begin{proof}
Note that for all $i \in [N]$, $v_i(t)$ satisfies 
\begin{align*}
    v_i(t) &= v_i(0) + \kappa \sum_{j = 1}^N \int_0^t \phi_{ij}(s) (v_j(s) - v_i(s)) ds \,,
\end{align*}
where $\phi_{ij}(s):= \frac{a_{ij}(\|x_i(s) - x_j(s) \|)}{\sum_{k = 1}^N a_{ik}(\|x_i(s) - x_j(s) \|)}.$ 
Since $\phi_{ij} \leq 1$, we have 
\begin{align}\label{eq: vlimit_conti}
    \|v_i(t)\| & \leq  \|v_i(0)\| + \kappa \sum_{j = 1}^N \int_0^{t}|\phi_{ij}(s)| \left\| v_j(s) - v_i(s) \right\| ds \notag\\
    & \leq \|v_i(0)\| + \kappa \sum_{j = 1}^N \int_0^{t}  \mathcal{D}(V(s)) ds\,.
\end{align}
Now, we set
\begin{align*}
    v_i^\infty &:= v_i(0) + \kappa \sum_{j = 1}^N \int_0^\infty \phi_{ij}(s) (v_j(s) - v_i(s)) ds\,.
\end{align*}
 By \cref{thm: conti_flocking}, $\mathcal{D}(V(s))$ decays exponentially to zero, and so $v_i^\infty$ above is well-defined. Indeed, note that
    \begin{align*}
    \begin{aligned}
      \|v_i^{\infty}\| &\leq \|v_i(0)\| + \kappa \sum_{j = 1}^N \int_0^{\infty} \mathcal{D}(V(s)) ds \\
      &\leq \|v_i(0)\| + \kappa \sum_{j = 1}^N \int_0^{\infty} \mathcal{D}(V(0)) \exp\left( -\kappa a(\mathcal{D}^{\infty}) s\right) ds \\
      & \leq \|v_i(0)\| + \kappa \sum_{j = 1}^N  \frac{\mathcal{D}(V(0))}{\kappa a (\mathcal{D}^{\infty})}  = \|v_i(0)\| + \frac{N \mathcal{D}(V(0))}{a (\mathcal{D}^{\infty})}  \,.
     \end{aligned}
    \end{align*}
Furthermore, \begin{align*}
    \left\|v_i^{\infty} - v_i(t) \right\| &= \left| \kappa \sum_{j = 1}^N \int_t^{\infty} \phi_{ij} (v_j(s) - v_i(s)) ds\right| 
    \leq \kappa \sum_{j = 1}^{N} \int_t^{\infty}\mathcal{D}(V(s)) ds \\
    &\leq \kappa \sum_{j = 1}^{N} \int_t^{\infty}\mathcal{D}(V(0)) \exp(-\kappa a(\mathcal{D}^{\infty}s)) ds  \leq  \frac{N \mathcal{D}(V(0)) \exp(- \kappa a(\mathcal{D}^{\infty}) t)}{ a(\mathcal{D}^{\infty})}\,,
\end{align*}
and so the result follows.
\end{proof}

\begin{remark}\label{rmk: enhanced_conti_flocking}
    Estimations in \cref{thm: conti_flocking}, and \cref{cor: continuous_v_limit} follow \cite{MT14}, which are improved versions from the original estimates with $a^2$ instead of $a$ from \cite{Motsch_2011}.
\end{remark}
\begin{remark}
    Thanks to the exponential decay of ${\mathcal D}(V(t))$ derived in Theorem~\ref{thm: conti_flocking}, the asymptotic velocities are well-defined, and they are the same:
\[ v_i^{\infty} = v_j^{\infty}, \quad i \neq j. \]
Note that asymptotic velocities depend on the whole particle trajectory of the initial configuration, unlike the CS model in which the asymptotic flocking velocity is completely determined by the average initial velocity. 
\end{remark}
% %
% In the following two sections, we present our main results on the mono-cluster flocking estimate and uniform-in-time stability of the discrete MT model. 
%%%%%%%%%%%%%%%%%%%%%%%%%%%
%
%
%	Section 3
%
%
%%%%%%%%%%%%%%%%%%%%%%%%%%%
\section{Mono-cluster flocking and uniform-in-time transition} \label{sec:3}
\setcounter{equation}{0}
In this section, we study the emergence of mono-cluster flocking for \eqref{eq: discrete_MT} and use this flocking estimate to derive the uniform-in-time transition from the discrete model to the continuous one as the time-step $h$ tends to zero.
\subsection{Preparatory lemmas} \label{sec:3.1} 
In this subsection, we derive several lemmas to be used in the proof of mono-cluster flocking.
\begin{lemma} \label{lem:a_lip}
Let ${\phi}_{ij}$ be the communication weight defined in \eqref{New-1-1}. Then
\begin{equation}\label{eq: a_lip}
    |{\phi}_{il} - {\phi}_{jl}| \leq \|{\phi}\|_{\text{Lip}} \|x_i - x_j \|, \quad \forall~i, j, l \in [N],
\end{equation}
where $\| {\phi} \|_{\text{Lip}}:= \frac{L_{a}}{N c_1}\left( 1 + \frac{c_2}{c_1}\right)$ depends on the constants from \cref{as: a}.
\end{lemma}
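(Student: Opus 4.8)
The plan is to estimate the difference of quotients directly from the definition \eqref{New-1-1}. Writing $a_{il} := a(\|x_i - x_l\|)$ and $A_i := \sum_{k=1}^N a(\|x_i - x_k\|)$, so that $\phi_{il} = a_{il}/A_i$, I would first put the difference over a common denominator, $\phi_{il} - \phi_{jl} = (a_{il} A_j - a_{jl} A_i)/(A_i A_j)$, and then apply the standard add-and-subtract trick in the numerator, $a_{il} A_j - a_{jl} A_i = (a_{il} - a_{jl}) A_j + a_{jl}(A_j - A_i)$. This splits the difference into two pieces, namely $\frac{a_{il} - a_{jl}}{A_i}$ and $\frac{a_{jl}(A_j - A_i)}{A_i A_j}$, each of which can be controlled separately.

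For the first piece, the Lipschitz bound \eqref{eq: phi_lip} together with the reverse triangle inequality gives $|a_{il} - a_{jl}| = |a(\|x_i - x_l\|) - a(\|x_j - x_l\|)| \le L_a \big| \|x_i - x_l\| - \|x_j - x_l\| \big| \le L_a \|x_i - x_j\|$, while the lower bound in \eqref{eq: phi_bound} yields $A_i \ge N c_1$; hence this piece is at most $\frac{L_a}{N c_1}\|x_i - x_j\|$. For the second piece, I would use $a_{jl} \le c_2$ from \eqref{eq: phi_bound}, the lower bound $A_i A_j \ge (N c_1)^2$, and the termwise estimate $|A_j - A_i| \le \sum_{k=1}^N |a_{jk} - a_{ik}| \le N L_a \|x_i - x_j\|$ (again by the Lipschitz property and the reverse triangle inequality applied to each summand); together these give the bound $\frac{c_2 \cdot N L_a}{(N c_1)^2}\|x_i - x_j\| = \frac{c_2 L_a}{N c_1^2}\|x_i - x_j\|$.

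Adding the two contributions produces exactly $\left(\frac{L_a}{N c_1} + \frac{c_2 L_a}{N c_1^2}\right)\|x_i - x_j\| = \frac{L_a}{N c_1}\left(1 + \frac{c_2}{c_1}\right)\|x_i - x_j\|$, which is precisely the claimed bound with the stated constant $\|\phi\|_{\text{Lip}}$.

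I do not expect a genuine obstacle here: the estimate is an elementary quotient bound, and the only things to get right are the bookkeeping in the add-and-subtract step and the termwise control of the denominator difference $A_j - A_i$. The role of the uniform bounds $c_1 \le a \le c_2$ is exactly to keep the normalizing denominators bounded away from zero (and from above), which is what ensures both pieces scale linearly in $\|x_i - x_j\|$; this is the point at which the all-to-all lower bound in \cref{as: a} is essential.
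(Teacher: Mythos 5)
Your proposal is correct and follows essentially the same route as the paper: the identical common-denominator decomposition $a_{il}A_j - a_{jl}A_i = (a_{il}-a_{jl})A_j + a_{jl}(A_j - A_i)$, the Lipschitz-plus-reverse-triangle-inequality bounds $|a_{il}-a_{jl}| \le L_a\|x_i-x_j\|$ and $|A_j - A_i| \le NL_a\|x_i-x_j\|$, and the denominator bounds $A_i, A_j \ge Nc_1$ with $a_{jl} \le c_2$, yielding the same constant. If anything, your write-up is slightly cleaner than the paper's, which carries an intermediate factor $|\phi_{jl}|$ where your direct use of $a_{jl} \le c_2$ and $A_iA_j \ge (Nc_1)^2$ is more transparent.
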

\begin{proof} We use \eqref{New-1-1} and \eqref{eq: phi_lip} to find 
\begin{align*} 
\begin{aligned}
 |{\phi}_{il} - {\phi}_{jl}| &= \left| \frac{a_{il} \sum_{k=1}^N a_{jk} - a_{jl} \sum_{k=1}^N a_{ik}}{\sum_{k=1}^{N} a_{ik} \sum_{k=1}^N a_{jk}} \right|  \\ 
 &= \left| 
 \frac{a_{il} \sum_{k=1}^N a_{jk} - a_{jl}\sum_{k=1}^N a_{jk} + a_{jl}(\sum_{k=1}^N a_{jk} - \sum_{k=1}^N a_{ik})}{\sum_{k=1}^{N} a_{ik} \sum_{k=1}^N a_{jk}}\right| \\ 
 &\leq \frac{L_{a} \|x_i - x_j \|}{N c_1} + \frac{N|\phi_{jl}| \|x_i - x_j \|L_{a}}{N^2 c_1^2} \\ &\leq \left( \frac{L_{a}}{N c_1} + \frac{L_{a} c_2}{N c_1^2} \right) \|x_i - x_j \| 
 % \\ &
 := \| \phi \|_{\text{Lip}} \|x_i - x_j \|.
\end{aligned}
\end{align*}
\end{proof}
\begin{lemma}\label{lem: triple_delta} 
For $X = \{ x_i \} $ and $ V = \{ v_i \}$, we have 
\[
 \sum_{1 \leq i, j, l  \leq N} \|\Delta_{li}^v \| \cdot \|\Delta_{ij}^v \| \cdot \|\Delta_{ij}^x \| \leq N  \|\Delta^x\|_F \|\Delta^v\|_F^2\,.
\]
\end{lemma}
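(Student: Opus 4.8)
The plan is to exploit the fact that the three-index sum is only weakly coupled: among the three factors, $\|\Delta^v_{li}\|$ is the only one depending on the summation index $l$. So the first move is to perform the $l$-summation first, rewriting the left-hand side as
\[
\sum_{1\le i,j,l\le N}\|\Delta^v_{li}\|\,\|\Delta^v_{ij}\|\,\|\Delta^x_{ij}\|
=\sum_{1\le i,j\le N}\|\Delta^v_{ij}\|\,\|\Delta^x_{ij}\|\,\Big(\sum_{l=1}^N\|\Delta^v_{li}\|\Big),
\]
which isolates an inner sum $\sum_l\|\Delta^v_{li}\|$ that depends on $i$ only.

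The key estimate is a crude bound on this inner sum. Since each scalar $\|\Delta^v_{li}\|$ is (the square root of) one of the summands defining the Frobenius norm, we have $\|\Delta^v_{li}\|\le\|\Delta^v\|_F$ for every pair $(l,i)$, and therefore $\sum_{l=1}^N\|\Delta^v_{li}\|\le N\|\Delta^v\|_F$ uniformly in $i$. Substituting this removes the $l$-dependence entirely and pulls the factor $N\|\Delta^v\|_F$ out in front of the remaining double sum.

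It then remains to control $\sum_{i,j}\|\Delta^v_{ij}\|\,\|\Delta^x_{ij}\|$, which I would handle with a single application of Cauchy--Schwarz over the index pair $(i,j)$, giving $\big(\sum_{i,j}\|\Delta^v_{ij}\|^2\big)^{1/2}\big(\sum_{i,j}\|\Delta^x_{ij}\|^2\big)^{1/2}=\|\Delta^v\|_F\,\|\Delta^x\|_F$ directly from the definitions in \eqref{New-1}. Multiplying the two bounds yields $N\|\Delta^v\|_F\cdot\|\Delta^v\|_F\,\|\Delta^x\|_F=N\|\Delta^x\|_F\|\Delta^v\|_F^2$, which is the claim.

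There is no genuine obstacle here; the only real design decision is how to decouple the three-index structure, and the sole step that fixes the constant is the crude bound $\sum_l\|\Delta^v_{li}\|\le N\|\Delta^v\|_F$. I note this step is lossy: using Cauchy--Schwarz on the inner sum, $\sum_l\|\Delta^v_{li}\|\le\sqrt N\,(\sum_l\|\Delta^v_{li}\|^2)^{1/2}$, together with the symmetry identity $\sum_j\|\Delta^v_{ij}\|^2=\sum_l\|\Delta^v_{li}\|^2$ and the elementary inequality $\sum_i r_i^4\le(\sum_i r_i^2)^2$, would sharpen the prefactor from $N$ to $\sqrt N$. Since the downstream flocking estimate only needs the factor $N$ (it ultimately enters through $\|\phi\|_{\text{Lip}}$, which already carries a compensating $1/N$ by \cref{lem:a_lip}), I would present the simpler crude bound rather than the sharper one.
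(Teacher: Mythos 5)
Your proof is correct, and it reaches the paper's constant $N$ by a genuinely different decomposition. The paper keeps the three indices coupled: it applies H\"older (Cauchy--Schwarz) over the shared index $i$ to $\sum_i A_iB_i$ with $A_i=\sum_l\|\Delta^v_{li}\|$ and $B_i=\sum_j\|\Delta^v_{ij}\|\,\|\Delta^x_{ij}\|$, then Jensen's inequality $\bigl(\sum_l c_l\bigr)^2\le N\sum_l c_l^2$ inside each factor, and finally the product bound $\sum_{i,j}a_{ij}b_{ij}\le\bigl(\sum_{i,j}a_{ij}\bigr)\bigl(\sum_{r,s}b_{rs}\bigr)$. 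You instead kill the $l$-sum at the outset with the entrywise bound $\|\Delta^v_{li}\|\le\|\Delta^v\|_F$, so that a single Cauchy--Schwarz over the pair $(i,j)$ finishes the proof; this is shorter and sidesteps the nested bookkeeping in the paper's first display (which, as printed, merges the two Cauchy--Schwarz factors into the single bracket $\bigl[\sum_i A_i^2B_i^2\bigr]^{1/2}$, an expression that for nonnegative summands actually sits \emph{below} $\sum_i A_iB_i$ --- evidently a typographical slip that the intended two-factor bound repairs, and that your route avoids entirely). Your sharpening remark is also sound: setting $s_i:=\bigl(\sum_l\|\Delta^v_{li}\|^2\bigr)^{1/2}$, one has $\sum_l\|\Delta^v_{li}\|\le\sqrt{N}\,s_i$ and, by Cauchy--Schwarz in $j$ together with $\|\Delta^v_{ij}\|=\|\Delta^v_{ji}\|$, $\sum_j\|\Delta^v_{ij}\|\,\|\Delta^x_{ij}\|\le s_i\,\|\Delta^x\|_F$, whence the triple sum is at most $\sqrt{N}\,\|\Delta^x\|_F\sum_i s_i^2=\sqrt{N}\,\|\Delta^x\|_F\|\Delta^v\|_F^2$; the paper's chain would produce the same $\sqrt{N}$ if its Jensen step on the factor $B_i$ were replaced by Cauchy--Schwarz, so both arguments are lossy at the analogous point, and, as you observe, the cruder constant $N$ costs nothing downstream because it is offset by the factor $1/N$ carried by $\|\phi\|_{\text{Lip}}$ in \cref{lem:a_lip}.
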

\begin{proof} We use \eqref{New-1} to get 
  \begin{align*}
  \begin{aligned}
  & \sum_{1\leq  i, j, l \leq N} \|\Delta_{li}^v \| \|\Delta_{ij}^v\| \| \Delta_{ij}^x \| \\
  & \hspace{1cm} \leq \left[ \sum_{i=1}^{N} \left( \sum_{l=1}^{N} |\Delta_{li}^v|\right)^2 \left( \sum_{j =1}^{N} |\Delta_{ij}^v||\Delta_{ij}^x| \right)^2 \right]^{\frac{1}{2}} \leq \left[ \sum_{i=1}^{N} \left( N\sum_{l=1}^{N} |\Delta_{li}^v|^2 \right) \left( N\sum_{j=1}^{N} |\Delta_{ij}^v|^2 |\Delta_{ij}^x|^2 \right)  \right]^{\frac{1}{2}} \\
  & \hspace{1cm}  \leq N \left[ \left( \sum_{1 \leq i, l \leq N}  |\Delta_{li}^v|^2 \right) \left( \sum_{1 \leq i, j \leq N}  |\Delta_{ij}^v|^2 \sum_{1 \leq r, s \leq N} |\Delta_{rs}^x|^2 \right)  \right]^{\frac{1}{2}} = N  \|\Delta^x\|_F \|\Delta^v\|_F^2\,.
    \end{aligned}
    \end{align*} 
    Here, we used the H\"older inequality and Jensen's inequality for the first and second inequalities. In the third inequality, we use the following inequality:
     \begin{equation*}
        \sum_{1 \leq i, j \leq N} a_{ij}b_{ij} \leq \sum_{1 \leq i, j \leq N} a_{ij} \sum_{1 \leq r, s \leq N} b_{rs}.
    \end{equation*}
\end{proof}
\begin{remark} By the same argument as in the proof of \cref{lem: triple_delta},  we also have
    \begin{equation*}
        \sum_{1 \leq i, j \leq N} \|\Delta^v_{ij}\|^2 \|\Delta^x_{ij}\|^2\leq \|\Delta^v\|_F^2 \|\Delta^x\|_\infty^2\leq \|\Delta^v\|_F^2 \|\Delta^x\|_F^2\,.
    \end{equation*}
\end{remark}
Next, we provide recursive relations for $\| \Delta^x(n) \|_F$ and $\| \Delta^v(n) \|_F$.
\begin{proposition}\label{prop: delta_xv_recurrence}
    Let $(X, V)$ be a global solution to \eqref{eq: discrete_MT}. Then, we have 
    \begin{align}
    \begin{aligned} \label{New-2}
        &\|\Delta^x(n+1)\|_F \leq \|\Delta^x(n)\|_F + h\|\Delta^v(n)\|_F\,, \quad n \geq 0, \\
        &\|\Delta^v(n+1)\|_F \leq \left[  1- h\kappa \left( 1 - \| \phi \|_{\text{Lip}}N \|\Delta^x(n)\|_F \right)   \right ] \|\Delta^v(n)\|_F.
        \end{aligned}
    \end{align}
\end{proposition}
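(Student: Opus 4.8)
The plan is to treat the two estimates separately: for each I would derive an exact one-step identity for the shape-difference array and then reduce the nonlinear remainder to the two combinatorial inequalities already established in \cref{lem:a_lip} and \cref{lem: triple_delta}.

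For the position bound the forward-Euler update gives immediately $\Delta^x_{ij}(n+1) = \Delta^x_{ij}(n) + h\Delta^v_{ij}(n)$ for every pair $(i,j)$, hence $\Delta^x(n+1) = \Delta^x(n) + h\Delta^v(n)$ as arrays, and the first inequality in \eqref{New-2} is just the triangle inequality for the Frobenius norm. The velocity bound is the substantive part. First I would rewrite the momentum update: using the unit-sum property $\sum_{l}\phi_{il}(n)=1$, the increment $v_i(n+1)-v_i(n)$ equals $h\kappa\bigl(\sum_{l}\phi_{il}v_l - v_i\bigr)$; subtracting the analogous expression for $j$ and exploiting $\sum_{l}(\phi_{il}-\phi_{jl})=0$ to recenter the resulting sum yields the exact identity
\begin{equation*}
\Delta^v_{ij}(n+1) = (1-h\kappa)\Delta^v_{ij}(n) + h\kappa\sum_{l=1}^N\bigl(\phi_{il}(n)-\phi_{jl}(n)\bigr)\Delta^v_{li}(n).
\end{equation*}
The key point of the recentering is that the mean-zero weights let me replace $v_l$ by the velocity difference $\Delta^v_{li}$, which is what makes the remainder controllable by the Lipschitz estimate.

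Next I would square this identity and sum over $i,j$ to expand $\|\Delta^v(n+1)\|_F^2$ into a diagonal term $(1-h\kappa)^2\|\Delta^v(n)\|_F^2$, a cross term $2(1-h\kappa)h\kappa\sum_{i,j}\langle\Delta^v_{ij},\sum_{l}(\phi_{il}-\phi_{jl})\Delta^v_{li}\rangle$, and a quadratic remainder. Since $h\kappa<1$ by \cref{as: a}(3), the factor $1-h\kappa$ is nonnegative, so the cross term may be bounded by its absolute value; Cauchy–Schwarz together with $|\phi_{il}-\phi_{jl}|\le\|\phi\|_{\text{Lip}}\|\Delta^x_{ij}\|$ from \cref{lem:a_lip} reduces it to the triple sum $\sum_{i,j,l}\|\Delta^v_{li}\|\,\|\Delta^v_{ij}\|\,\|\Delta^x_{ij}\|$, which \cref{lem: triple_delta} controls by $N\|\Delta^x(n)\|_F\|\Delta^v(n)\|_F^2$. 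For the remainder, the same Lipschitz bound plus the Cauchy–Schwarz and product-of-sums argument used in the proof of \cref{lem: triple_delta} give $\sum_{i,j}\bigl\|\sum_{l}(\phi_{il}-\phi_{jl})\Delta^v_{li}\bigr\|^2 \le N\|\phi\|_{\text{Lip}}^2\|\Delta^x(n)\|_F^2\|\Delta^v(n)\|_F^2$. Writing $\alpha := \|\phi\|_{\text{Lip}}N\|\Delta^x(n)\|_F$ and collecting the three estimates (using $N\ge1$ to absorb the factor $1/N$ in the remainder into $\alpha^2$) gives
\begin{equation*}
\|\Delta^v(n+1)\|_F^2 \le \Bigl[(1-h\kappa)^2 + 2(1-h\kappa)h\kappa\,\alpha + h^2\kappa^2\alpha^2\Bigr]\|\Delta^v(n)\|_F^2 = \bigl[1-h\kappa(1-\alpha)\bigr]^2\|\Delta^v(n)\|_F^2.
\end{equation*}
Taking square roots yields the second inequality in \eqref{New-2}; the sign condition $1-h\kappa(1-\alpha)\ge0$ needed here holds because $\alpha\ge0$ and $h\kappa<1$ force $h\kappa(1-\alpha)\le h\kappa<1$.

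I expect the main obstacle to be the bookkeeping in the cross and remainder terms: the whole point is to calibrate the estimates so that completing the square produces exactly the clean coefficient $1-h\kappa(1-\|\phi\|_{\text{Lip}}N\|\Delta^x(n)\|_F)$ with no stray constants and no uncontrolled $h^2$ error. This is precisely why the recentering step (using $\sum_{l}(\phi_{il}-\phi_{jl})=0$) and the sharp triple-product bound of \cref{lem: triple_delta} are essential: the cross term must supply exactly the linear-in-$\alpha$ correction while the remainder must close the square, and any looser estimate on either piece would spoil the perfect-square structure.
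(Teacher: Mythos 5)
Your proposal is correct and follows essentially the same route as the paper: the same recentered identity $\Delta^v_{ij}(n+1) = (1-h\kappa)\Delta^v_{ij}(n) + h\kappa\sum_{l}(\phi_{il}-\phi_{jl})\Delta^v_{li}(n)$, the same squaring-and-summing with \cref{lem:a_lip} and \cref{lem: triple_delta} for the cross term, and the same perfect-square recombination (your remainder bound with the single factor $N$ is in fact the sharp form of the paper's Jensen step, and absorbing it into $\alpha^2$ via $N\le N^2$ is exactly what the paper does). Your only departures are harmless refinements: obtaining $\eqref{New-2}_1$ directly from the Frobenius triangle inequality rather than expanding the square, and explicitly verifying the sign condition $1-h\kappa(1-\alpha)\ge 0$ before taking square roots, which the paper leaves implicit.
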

\begin{proof}
\noindent (i)~It follows from \eqref{eq: discrete_MT} that 
\[  x_i(n+1) = x_i(n)+hv_i(n)\,, \quad  x_j(n+1) = x_j(n)+hv_j(n)\,. \]
These imply
\[
\Delta_{ij}^x(n+1) = \Delta_{ij}^x(n) + h \Delta_{ij}^v(n)\,,
\]
and so
\begin{align}
\begin{aligned} \label{New-3}
\|\Delta_{ij}^x(n+1) \|^2 &= \|\Delta_{ij}^x(n) \|^2 + h^2 \|\Delta_{ij}^v(n) \|^2 + 2 h \Delta_{ij}^x(n) \cdot  \Delta_{ij}^v(n) \\
&\leq  \|\Delta_{ij}^x(n) \|^2 + h^2 \|\Delta_{ij}^v(n) \|^2 + 2 h  \|\Delta_{ij}^x(n) \| \cdot \|\Delta_{ij}^v(n) \|.
\end{aligned}
\end{align}
We sum up \eqref{New-3} over all $i, j \in [N]$ to derive 
 \begin{align*}
 \begin{aligned}
        \|\Delta^x(n+1)\|^2_F &= \sum_{i, j =1}^{N}  \Big( \|\Delta^x_{ij}(n) \|^2 + h^2 \|\Delta^v_{ij}(n) \|^2 + 2h \|\Delta^x_{ij}(n) \| \cdot \|\Delta^v_{ij}(n) \|  \Big) \\ 
        & \leq \sum_{i, j =1}^{N} \left( \|\Delta^x_{ij}(n) \|^2 + h^2 \|\Delta^v_{ij}(n) \|^2 + 2h  \sqrt{\sum_{i, j = 1}^{N} \|\Delta^x_{ij}(n) \|^2 \sum_{r, s = 1}^{N} \|\Delta^v_{rs}(n) \|^2 }  \right) \\ 
        & = \left( \sqrt{\sum_{i, j =1}^{N} \| \Delta_{ij}^x(n) \|^2} + h  \sqrt{\sum_{r, s = 1}^{N} \| \Delta_{rs}^v(n) \|^2} \right)^2 
        % \\ &
        = \Big(\|\Delta^x(n)\|_F + h \|\Delta^v(n)\|_F \Big)^2.
   \end{aligned}
    \end{align*}
 This yields the desired estimate for $\eqref{New-2}_1$. \newline
     
\noindent (ii)~It follows from \eqref{eq: discrete_MT} that 
\begin{align*}
\begin{aligned}
v_i(n+1) &= v_i(n)+h\kappa \sum_{l=1}^{N} {\phi}_{il} (n) \big(v_l(n)-v_i(n)\big), \\
v_j(n+1) &= v_j(n)+h\kappa\sum_{l=1}^{N} {\phi}_{jl}(n)  \big(v_l(n)-v_j(n)\big).
\end{aligned}
\end{align*}
These yields
\begin{align*}
\begin{aligned}
       & \Delta_{ij}^v(n+1) 
       % = v_i (n+1) - v_j (n+1) \\        &  \hspace{0.7cm} 
       = v_i(n) - v_j(n) + h\kappa\sum_{l =1}^N \Big( \phi_{il}(n)(v_l(n) - v_i(n)) - \phi_{jl}(n)(v_l(n) - v_j(n))\Big) \\ 
        & \hspace{0.7cm} = v_i(n) - v_j(n) + h\kappa\sum_{l =1}^N \Big[ 
         \phi_{il}(n)(v_l(n) - v_i(n)) - \phi_{jl}(n) \Big(v_l(n) - v_i(n) + v_i(n) - v_j(n) \Big) \Big] \\ 
        & \hspace{0.7cm} = (1 - h\kappa) (v_i(n) - v_j(n)) + h\kappa\sum_{l = 1}^N (\phi_{il}(n) - \phi_{jl}(n))(v_l(n) - v_i(n)) \\
        & \hspace{0.7cm} =   (1 - h\kappa) \Delta_{ij}^v(n) + h\kappa\sum_{l = 1}^N  (\phi_{il}(n) - \phi_{jl}(n) ) \Delta^v_{li}(n).
    \end{aligned}
    \end{align*}
    Here in the last equality, we used the unit sum relation $\sum_{l=1}^N \phi_{jl} = 1$. 
  Then we have 
  \begin{align} 
  % \begin{aligned} 
  &\|\Delta_{ij}^v(n+1) \|^2 \notag\\
  &= (1 - h\kappa)^2 \|\Delta_{ij}^v(n) \|^2 + 2h\kappa(1 - h\kappa)\sum_{l = 1}^N(\phi_{il}(n) - \phi_{jl}(n))\cdot \Delta_{li}^v(n) \cdot \Delta_{ij}^v(n) \notag\\
  &\qquad + h^2\kappa^2\left \| \sum_{l = 1}^N ( \phi_{il}(n) - \phi_{jl}(n)) \Delta_{li}^v(n) \right \|^2 \notag\\ 
  &\leq(1 - h\kappa)^2 \|\Delta_{ij}^v(n) \|^2 + 2h\kappa(1 - h\kappa)\sum_{l = 1}^N \|\phi \|_{\text{Lip}}\cdot \|\Delta_{ij}^x(n)\| \cdot \|\Delta_{li}^v(n) \| \cdot  \|\Delta_{ij}^v(n) \|  \notag\\
  &\qquad+ h^2\kappa^2\left[ \sum_{l = 1}^N \|\phi\|_{\text{Lip}} \cdot\|\Delta_{ij}^x(n) \| \cdot \|\Delta_{li}^v(n) \| \right]^2\,,\label{New-4}
    \end{align}
    where we used the Lipschitz property of $\phi$ from \cref{lem:a_lip}. \newline
   
  \noindent  Next, we sum up \eqref{New-4}  over $i, j$, and use \cref{lem: triple_delta} for the second term to find 
  \begin{align*}
        &\|\Delta^v(n+1)\|_F^2\\
        &\leq ( 1- h\kappa)^2\|\Delta^v(n)\|_F^2 + 2h\kappa N(1 - h\kappa) \| \phi \|_{\text{Lip}} \|\Delta^x(n)\|_F \|\Delta^v(n)\|_F^2 
        \\ & \qquad 
        + h^2 \kappa^2 \sum_{i,  j = 1}^n \|\phi \|_{\text{Lip}}^2 \| \Delta^x_{i, j}(n) \|^2 \Big \| \sum_{l = 1}^N  \Delta^v_{li}(n)\Big \|^2  \\ & \leq \|\Delta^v(n)\|_F^2 \Big( ( 1- h\kappa)^2 +   2h\kappa N(1 - h\kappa)  \|\phi \|_{\text{Lip}} \|\Delta^x(n)\|_F 
        % \\ & 
        + h^2 \kappa^2 N^2 \|\phi \|_{\text{Lip}}^2 \|\Delta^x(n)\|_F^2 \Big) \\ &= \Big[ 1- h\kappa \left( 1 - \|\phi\|_{\text{Lip}}N \|\Delta^x(n)\|_F \right) \Big]^2 \|\Delta^v(n)\|_F^2 \,,
    \end{align*}
    where we used the following relation by the Jensen inequality: 
    \begin{equation*}
         \sum_{i,  j = 1}^N \| \Delta^x_{ij}(n) \|^2 \Big \| \sum_{l = 1}^N  \Delta^v_{li}(n) \Big \|^2  \leq N \|\Delta^x(n)\|_F^2 \|\Delta^v(n)\|_F^2\,.
    \end{equation*}
\end{proof}

\subsection{Asymptotic flocking dynamics} \label{sec:3.2}
In this subsection, we develop a suitable framework leading to mono-cluster flocking. First, we set 
\[ M:= \frac{1}{4N \|\phi \|_{\text{Lip}}} \quad \mbox{and} \quad   \psi(s):= 1 - \| \phi \|_{\text{Lip}} N s.  \]

\begin{theorem} \label{thm: flocking}
Suppose that initial data and coupling strength satisfy 
\begin{equation} \label{eq: initial}
\|\Delta^x(0)\|_F < M \quad \mbox{and} \quad  \|\Delta^v(0)\|_F < \kappa \int_{\|\Delta^x(0)\|_F}^M \psi(s) ds,
\end{equation}
and let $(X, V) \in \R^{2dN}$ be a global solution to \eqref{eq: discrete_MT}. Then
\begin{enumerate}
    \item[(i)] it holds
    \begin{equation} \label{eq: flocking_x}
\sup_{0 \leq n < \infty} \|\Delta^x(n)\|_F \leq M\,,
\end{equation}
    \item[(ii)] and for any constant $0<C<1$ there exists a sufficiently small $h>0$ such that 
    \begin{equation} \label{eq: discrete_v_flocking} 
\|\Delta^v(n)\|_F\leq\|\Delta^v(0)\|_F  e^{-C \kappa \psi(M)nh}, \quad \forall~n \geq 0.
\end{equation}
\end{enumerate}
\end{theorem}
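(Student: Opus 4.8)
The plan is to mimic the continuous Lyapunov-functional argument behind \cref{thm: conti_flocking}, with the communication weight $a$ replaced by the effective decay profile $\psi$. Concretely, I would introduce the discrete analog of $\mathcal{H}^+$ from \eqref{eq: lyapunov_H_MT}, namely
\[
\mathcal{E}(n) := \|\Delta^v(n)\|_F + \kappa \int_0^{\|\Delta^x(n)\|_F} \psi(s)\,ds\,.
\]
Two elementary observations drive everything. First, $\psi$ is affine and strictly decreasing with $\psi(M) = \tfrac34 > 0$, so $\psi > 0$ on $[0, 4M)$ and $\Phi(r) := \int_0^r \psi(s)\,ds$ is strictly increasing there. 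Second, the hypothesis \eqref{eq: initial} is exactly the statement that $\mathcal{E}(0) < \kappa \int_0^M \psi(s)\,ds = \kappa\,\Phi(M)$, since $\int_0^{\|\Delta^x(0)\|_F}\psi + \int_{\|\Delta^x(0)\|_F}^M \psi = \int_0^M\psi$.

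For part (i) I would prove by induction on $n$ the pair of statements $\|\Delta^x(n)\|_F < M$ and $\mathcal{E}(n) \le \mathcal{E}(0)$, the base case being \eqref{eq: initial}. For the inductive step, the hypothesis $\mathcal{E}(n) \le \mathcal{E}(0) < \kappa\Phi(M)$ together with $\Phi(\|\Delta^x(n)\|_F) \ge 0$ yields $\|\Delta^v(n)\|_F < \kappa\Phi(M) \le \kappa M$, and since $h < 1/\kappa$ by \cref{as: a}, we get $h\|\Delta^v(n)\|_F < M$; hence by $\eqref{New-2}_1$ we have $\|\Delta^x(n+1)\|_F \le \|\Delta^x(n)\|_F + h\|\Delta^v(n)\|_F < 2M < 4M$, keeping the trajectory in the region where $\psi > 0$. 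I would then estimate the increment of $\mathcal{E}$ using $\eqref{New-2}_2$ in the form $\|\Delta^v(n+1)\|_F - \|\Delta^v(n)\|_F \le -h\kappa\psi(\|\Delta^x(n)\|_F)\|\Delta^v(n)\|_F$, together with the monotonicity of $\Phi$ and $\eqref{New-2}_1$ to write $\Phi(\|\Delta^x(n+1)\|_F) - \Phi(\|\Delta^x(n)\|_F) \le \int_{\|\Delta^x(n)\|_F}^{\|\Delta^x(n)\|_F + h\|\Delta^v(n)\|_F}\psi(s)\,ds \le \psi(\|\Delta^x(n)\|_F)\,h\|\Delta^v(n)\|_F$, the last step because $\psi$ is decreasing so its maximum on the interval sits at the left endpoint. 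Adding the two bounds, the $O(h)$ terms cancel exactly and $\mathcal{E}(n+1) \le \mathcal{E}(n) \le \mathcal{E}(0)$. Finally $\kappa\Phi(\|\Delta^x(n+1)\|_F) \le \mathcal{E}(n+1) < \kappa\Phi(M)$, and strict monotonicity of $\Phi$ on $[0,2M]$ forces $\|\Delta^x(n+1)\|_F < M$, closing the induction and proving \eqref{eq: flocking_x}.

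For part (ii), the uniform bound $\|\Delta^x(n)\|_F \le M$ and the monotonicity of $\psi$ give $\psi(\|\Delta^x(n)\|_F) \ge \psi(M)$ for every $n$, while $h\kappa < 1$ and $\psi(M) < 1$ ensure the contraction factor $1 - h\kappa\psi(\|\Delta^x(n)\|_F)$ lies in $(0,1)$. Iterating $\eqref{New-2}_2$ then yields $\|\Delta^v(n)\|_F \le (1 - h\kappa\psi(M))^n\|\Delta^v(0)\|_F$, and the stated estimate \eqref{eq: discrete_v_flocking} follows from the elementary bound $1 - x \le e^{-Cx}$, valid for any fixed $C \in (0,1)$ and all sufficiently small $x>0$ (here $x = h\kappa\psi(M)$, so small $h$ suffices), after raising to the $n$th power.

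The main obstacle is the bootstrapping in part (i): the monotonicity of $\mathcal{E}$ is only available while the configuration remains in the region $\{\psi \ge 0\}$, where $\Phi$ is increasing, yet that very monotonicity is what confines the configuration. The resolution is the simultaneous induction above, in which the smallness condition $h < 1/\kappa$ guarantees that a single Euler step cannot push $\|\Delta^x\|_F$ from below $M$ out past $2M$, so the trajectory never escapes the good region between consecutive verifications of the invariant. Everything else reduces to routine manipulation of the recursions in \cref{prop: delta_xv_recurrence}.
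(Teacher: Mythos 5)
Your proof is correct, and it takes a genuinely different route from the paper's. The paper argues by contradiction: it posits a first escape time $n^\infty$ with $\|\Delta^x(n^\infty)\|_F > M$, sums the dissipation inequality $\eqref{New-2}_2$ up to $n^\infty$, and---because the sequence $\{\|\Delta^x(n)\|_F\}$ is not monotone---must invoke the reindexing estimate of \cref{lem:reindex} (the entire content of \cref{sec:App-A}) to compare the resulting telescoping sum with a Riemann sum of $\kappa\int \psi$, finally deducing the absurdity $\|\Delta^v(n^\infty)\|_F < 0$. You instead transplant the continuous Lyapunov functional $\mathcal{H}^+$ of \eqref{eq: lyapunov_H_MT} to the discrete setting, $\mathcal{E}(n)=\|\Delta^v(n)\|_F+\kappa\Phi(\|\Delta^x(n)\|_F)$ with $\Phi(r)=\int_0^r\psi(s)\,ds$, and run a simultaneous induction on the invariant $\{\|\Delta^x(n)\|_F<M,\ \mathcal{E}(n)\le\mathcal{E}(0)\}$. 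Your one-step computation is sound: since $\psi$ is decreasing, $\kappa\Phi(\|\Delta^x(n+1)\|_F)-\kappa\Phi(\|\Delta^x(n)\|_F)\le \kappa\,\psi(\|\Delta^x(n)\|_F)\,h\|\Delta^v(n)\|_F$, which cancels exactly against the dissipation from $\eqref{New-2}_2$ with no $O(h^2)$ remainder, and the a priori bound $h\|\Delta^v(n)\|_F< h\kappa\Phi(M)\le h\kappa M<M$ (using $\psi\le 1$ and $h<1/\kappa$ from \cref{as: a}) confines the trajectory to $[0,2M]$, where $\psi\ge \tfrac12>0$ and $\Phi$ is strictly increasing, so the strict inequality $\mathcal{E}(n+1)<\kappa\Phi(M)$ forces $\|\Delta^x(n+1)\|_F<M$ and closes the induction; part (ii) is then essentially the paper's argument, merely phrased via $1-x\le e^{-Cx}$ instead of $\lim_{h\to0}h^{-1}\log(1-h\kappa\psi(M))=-\kappa\psi(M)$. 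As for what each approach buys: yours bypasses the appendix machinery entirely and yields the stronger quantitative invariant $\|\Delta^v(n)\|_F+\kappa\int_0^{\|\Delta^x(n)\|_F}\psi(s)\,ds\le \mathcal{E}(0)$ for all $n$, the exact discrete analog of the continuum estimate following \eqref{eq: lyapunov_H_MT}, whereas the paper's summation-plus-reindexing argument, while heavier, parallels the discrete Cucker-Smale analysis it builds on and isolates in \cref{lem:reindex} a reusable tool for comparing non-monotone Riemann-type sums. Two small points worth making explicit if you write this up: $\eqref{New-2}_2$ arises from a squared estimate, so using it as a signed increment tacitly needs the bracket $1-h\kappa\psi(\|\Delta^x(n)\|_F)$ to be nonnegative, which indeed follows from $h\kappa<1$ and $\psi\le1$; and unlike the paper, which treats $\|\Delta^v(0)\|_F=0$ as a separate trivial case, your functional argument handles it uniformly.
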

\begin{remark}
       Our estimates for the discrete MT model only provide a conditional flocking guarantee, and it cannot be extended to an unconditional flocking theorem with our current proof techniques. This is different from the results for the continuous MT model. In \cite[Proposition 2.9]{MT14}, an unconditional flocking result is presented. 
       % Intuitively, it should be possible to derive an analogous result for the discrete model as well. 
       Numerically, we will indeed observe unconditional flocking, see \cref{sec:5}. Generalizing our results to obtain unconditional flocking is an interesting open question.
\end{remark}
\begin{proof}[Proof of \cref{thm: flocking}]
If $\|\Delta^v(0) \|_F=0$ then (i) and (ii) are trivially true since \eqref{New-2} guarantees $\|\Delta^v(n) \|_F=0$ and $\|\Delta^x(n+1) \|_F\le \|\Delta^n(n) \|_F$ for all $n\ge 0$. Therefore from now on we assume  $\|\Delta^v(0) \|_F>0$. Showing (i) is the most involved step; once established, (ii) follows as a direct consequence. \\

\noindent (i)~(Uniform bound in position shape discrepancy):
Suppose the contrary holds, i.e., there exists a positive constant $n^\infty = n^{\infty}(M) > 0$ such that
    \begin{equation}\label{eq: assump}
        \|\Delta^x(n) \|_F \leq M \quad \forall~n < n ^{\infty} \quad \text{ and } \quad \|\Delta^x(n ^{\infty}) \|_F > M.
    \end{equation} 
 By assumption $\|\Delta^x(0)\|_F < M$ in \eqref{eq: initial}, and so
 $n^{\infty} > 0$. From $\eqref{New-2}_2$ we have
    \begin{equation}\label{eq: v_recurrence_1}
        \|\Delta^v(n+1)\|_F - \|\Delta^v(n)\|_F \leq - h\kappa \psi(\|\Delta^x(n)\|_F) \|\Delta^v(n)\| \le 0 \,.
    \end{equation}
Summing \eqref{eq: v_recurrence_1}  over all $n = 0, \cdots, n^{\infty}-1$ and using $\eqref{New-2}_1$ yields 
     \begin{align} 
     \begin{aligned} \label{eq: n_inf}
      \|\Delta^v(n^{\infty})\|_F - \|\Delta^v(0)\|_F 
      % \\      & \hspace{1cm} 
      &\leq - \sum_{n = 0}^{n^{\infty} - 1} h\kappa \|\Delta^v(n)\|_F  \psi(\|\Delta^x(n)\|_F)  \\ 
      &\leq - \sum_{n = 0}^{n^{\infty }- 1} \kappa \left( \|\Delta^x(n+1)\|_F - \|\Delta^x(n)\|_F\right)  \psi(\|\Delta^x(n)\|_F) \,.
    \end{aligned} 
    \end{align} 
Here, we used that \eqref{eq: assump} guarantees $\psi(\|\Delta^x(n)\|_F) > 0$ for all $n = 0, \cdots, n^{\infty} - 1$.
Since $\|\Delta^x(n)\|_F$ is not monotonic with respect to $n$, we reindex the set 
\[ \{\|\Delta^x(n)\|_F:\|\Delta^x(n)\|_F\geq \|\Delta^x(0)\|_F\}_{n=0}^{n^\infty} \]
into $\{ \|\Delta^x_q \|_F\}_{q = 0}^{K}$ so that it satisfies the following indexing rule: 
\[ \|\Delta^x(0)\|_F = \|\Delta^x_0\|_F, \quad \|\Delta^x(n^{\infty})\|_F = \|\Delta^x_K\|_F, \quad \|\Delta^x_q\|_F < \|\Delta^x_{q +1}\|_F. \]
If there exist  $n \neq m$ such that 
\[ \|\Delta^x(n)\|_F = \|\Delta^x(m)\|_F, \]
we set 
\[ \|\Delta^x_q\|_F = \|\Delta^x(n)\|_F = \|\Delta^x(m)\|_F \quad \mbox{for some}~~q \in [K]. \]
Hence, $\|\Delta^x(n)\|_F$ with the same value is relabelled and merged as sharing the same index. 

Note that the left-hand side of \eqref{eq: n_inf} is strictly negative. We would like to interpret the right-hand side of \eqref{eq: n_inf} an approximation of the integral\begin{equation*} \int_{\|\Delta^x(0)\|_F}^{\|\Delta^x(n^{\infty})\|_F} \kappa \psi(s) ds\,.
    \end{equation*}
    By \cref{lem:reindex} in \cref{sec:App-A}, we have 
    \begin{align}
    \begin{aligned} \label{eq: sum_ineq}
    & \sum_{q = 0}^{K- 1} \left( \|\Delta^x_{q +1} \|_F - \|\Delta^x_q\|_F\right) \kappa \psi(\|\Delta^x_q\|_F) \\
    & \hspace{1.5cm} \leq \sum_{n = 0}^{n^{\infty }- 1} \left( \|\Delta^x(n+1)\|_F - \|\Delta^x(n)\|_F\right) \kappa \psi(\|\Delta^x(n)\|_F)\,.
    \end{aligned}
    \end{align}
    where the right-hand side of \eqref{eq: sum_ineq} is strictly positive. Combining with \eqref{eq: n_inf}, we get 
    \begin{align}\label{eq: v_recurrence_2}
        \|\Delta^v(n^{\infty})\|_F - \|\Delta^v(0)\|_F & \leq - \sum_{q = 0}^{K - 1} (\|\Delta^x_{q + 1}\|_F - \|\Delta_q^x\|_F) \kappa \psi(\|\Delta_q^x\|_F)\,.
    \end{align}
    Moreover, it follows from \eqref{eq: v_recurrence_1} that
 \[ \|\Delta^v(n)\|_F \leq \|\Delta^v(0)\|_F \quad \mbox{for all}~ 0 \leq n < n^{\infty}\,, \]
 and so \eqref{New-2} implies
 \begin{equation}\label{eq: Dx_n_recurrence}
        \|\Delta^x(n+1)\|_F - \|\Delta^x(n)\|_F \leq \|\Delta^v(n)\|_F h \leq \|\Delta^v(0)\|_F h  \quad \mbox{for $0 \leq n < n ^{\infty}$}.
    \end{equation}
By condition~\eqref{eq: initial}, we have $\|\Delta^x(0)\|_F < M$
and 
    \begin{align*}
        \|\Delta^v(0)\|_F &< \kappa \int_{\|\Delta^x(0)\|_F}^M \psi(s) ds  = \kappa   \int_{\|\Delta^x(0)\|_F}^M (1 - N  \|\phi \|_{\text{Lip}} s ) ds \\ & = \kappa \left(M - \|\Delta^x(0)\|_F - \frac{N  \|\phi \|_{\text{Lip}} }{2} (M^2 - \|\Delta^x(0)\|_F^2) \right) < \kappa M  \,.
    \end{align*}    
Then applying \eqref{eq: Dx_n_recurrence} to $n = n^{\infty -1}$ we see
  \begin{align*}
        \|\Delta^x(n^{\infty})\|_F  & < \|\Delta^v(0)\|_F h + M < \kappa h M + M < 2M
    \end{align*} as long as $ h< \frac{1}{\kappa}$, which holds from \cref{as: a}. It follows that $\psi(\|\Delta^x(n^{\infty})\|_F) > 0$. 
Since $\psi$ is decreasing, we can estimate the integral from $\|\Delta^x(0)\|_F$ to $\|\Delta^x(n^{\infty})\|_F$ from above by a Riemann sum: \begin{align*}
        & \sum_{q = 0}^{K - 1} \left( \|\Delta^x_{q + 1}\|_F - \|\Delta^x_q\|_F \right) \kappa \psi(\|\Delta^x_q\|_F) > \int_{\|\Delta^x(0)\|_F}^{\|\Delta^x(n^{\infty})\|_F}\kappa \psi(s) ds  \\ 
        & \hspace{1cm}  > \int_{\|\Delta^x(0)\|_F}^{\|\Delta^x(n^{\infty})\|_F}\kappa \psi(s) ds  - \left( \int_{\|\Delta^x(0)\|_F}^M \kappa \psi(s) ds - \|\Delta^v(0)\|_F \right) \\ 
        &  \hspace{1cm}  = \int_{M}^{\|\Delta^x(n^{\infty})\|_F}\kappa \psi(s) ds + \|\Delta^v(0)\|_F > \|\Delta^v(0)\|_F\,.
    \end{align*}
    We substitute this bound into \eqref{eq: v_recurrence_2} to get 
    \begin{equation*}
        \|\Delta^v(n^{\infty})\|_F - \|\Delta^v(0)\|_F < - \|\Delta^v(0)\|_F\,.
    \end{equation*}
    We arrived at a contradiction since obviously, it must hold that $\|\Delta^v(n^{\infty})\|_F\ge 0$. Therefore, we have
    \[ n^{\infty} = + \infty \quad \mbox{and} \quad  \sup_{0 \leq n < \infty} \|\Delta^x(n)\|_F \leq M. \]
    
\noindent (ii) (Exponential decay of velocity shape discrepancy): Since $\psi$ is decreasing and $\|\Delta^x(n)\|_F$ is uniformly bounded by $M$ from (i),  we have $\psi(\|\Delta^x(n)\|_F) \geq \psi(M)$.
   Therefore, it follows from $\eqref{New-2}_2$ that 
    \[
        \|\Delta^v(n+1)\|_F - \|\Delta^v(n)\|_F \leq - h\kappa \|\Delta^v(n)\|_F \psi(\|\Delta^x(n)\|_F) \leq - h\kappa \|\Delta^v(n)\|_F\psi(M)\,.
    \]
    This yields
     \begin{align}
     \begin{aligned} \label{eq: NN-0}
        \|\Delta^v(n+1)\|_F &\leq \|\Delta^v(n)\|_F ( 1- h\kappa \psi(M)) \\ & \leq \|\Delta^v(0)\|_F(1 - h\kappa \psi(M))^{n+1}  \\ & = \|\Delta^v(0)\|_F\exp\left[ (n+1) \log( 1 - h\kappa \psi(M)) \right]  \\ & =  \|\Delta^v(0)\|_F\exp\left[ (n+1) h \left(\frac{\log( 1 - h\kappa \psi(M))}{h} \right) \right].
   \end{aligned}
    \end{align}
    On the other hand, note that 
    \begin{equation} \label{eq: NN-1}
        \lim_{h \rightarrow 0} \frac{\log (1 - h\kappa \psi(M))}{h} = - \kappa \psi(M).
    \end{equation}
 Finally, we combine \eqref{eq: NN-0} and \eqref{eq: NN-1} to obtain 
    \begin{equation*}
        \|\Delta^v(n)\|_F \leq \|\Delta^v(0)\|_F e^{-C \kappa \psi(M)nh},
    \end{equation*}
        for any $0 < C < 1$ and $h \ll 1$ small enough. 
\end{proof}

In \cref{sec:2}, we mention that if solutions to the continuous MT model follow flocking dynamics, then the particles converge to exhibit a uniform constant velocity as they are evolved by the dynamical system. Although such velocity is not an invariant quantity as it is determined by the initial setup, it is notable that there indeed exists a limiting velocity. In \cite{MT14}, this is referred to as an \emph{emergent} flocking velocity. We can conclude the analogous result for the discrete MT model based on \cref{thm: flocking}.
\begin{corollary}\label{cor: discrete_v_limit}
    Under the same setting as in \cref{thm: flocking}, there exists a set of asymptotic velocities $\{v_i^{\infty} \}$ such that 
  \[  v_i^{\infty} := \lim_{n \rightarrow \infty} v_i(n) \quad \text{ with } \quad \|v_i^\infty\|< \infty, \quad \forall~i \in [N]\,,\]
  and 
  \begin{align}\label{eq: discrete_velocity}
          \|v_i^{\infty} - v_i(n)\| \le \frac{ h\kappa \sqrt{N} \|\Delta^v(0)\|_F  e^{ - C\kappa \psi(M)h n}}{1- e^{ - C\kappa \psi(M)h}  } \,.
  \end{align}
\end{corollary}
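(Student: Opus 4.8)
The plan is to mimic the continuous argument of \cref{cor: continuous_v_limit}, replacing the time integral by a telescoping sum. First I would write the velocity of the $i$-th particle as a discrete Duhamel representation starting from the initial datum,
\[
v_i(n) = v_i(0) + h\kappa \sum_{m=0}^{n-1} \sum_{j=1}^N \phi_{ij}(m)\big(v_j(m) - v_i(m)\big),
\]
obtained by telescoping the second equation of \eqref{eq: discrete_MT}, and define the candidate limit $v_i^{\infty}$ by extending this sum to $m = \infty$. The whole corollary then reduces to showing that the tail of this series is summable with the claimed geometric rate; this simultaneously establishes that $v_i^{\infty}$ is well-defined and finite, that $v_i(n) \to v_i^{\infty}$, and the quantitative estimate \eqref{eq: discrete_velocity}.

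The key step is to bound a single velocity increment by the ensemble functional $\|\Delta^v(n)\|_F$. Using the unit-sum property $\sum_{j=1}^N \phi_{ij}(n) = 1$ together with $0 \le \phi_{ij}(n) \le 1$ from \eqref{New-1-1}, I would estimate
\[
\|v_i(n+1) - v_i(n)\| \le h\kappa \sum_{j=1}^N \phi_{ij}(n) \|\Delta^v_{ji}(n)\| \le h\kappa \sum_{j=1}^N \|\Delta^v_{ji}(n)\| \le h\kappa \sqrt{N}\,\|\Delta^v(n)\|_F,
\]
where the last inequality is the Cauchy--Schwarz inequality over the index $j$; this is precisely where the factor $\sqrt{N}$ in \eqref{eq: discrete_velocity} originates.

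Next I would invoke the exponential decay \eqref{eq: discrete_v_flocking} of \cref{thm: flocking}, namely $\|\Delta^v(n)\|_F \le \|\Delta^v(0)\|_F e^{-C\kappa\psi(M)nh}$, to convert the increment bound into geometric decay with ratio $r := e^{-C\kappa\psi(M)h} \in (0,1)$. Summing the tail from $m = n$ to $\infty$ then gives
\[
\|v_i^{\infty} - v_i(n)\| \le \sum_{m=n}^{\infty} \|v_i(m+1) - v_i(m)\| \le h\kappa\sqrt{N}\,\|\Delta^v(0)\|_F \sum_{m=n}^{\infty} r^m = \frac{h\kappa\sqrt{N}\,\|\Delta^v(0)\|_F\, r^n}{1 - r},
\]
which is exactly \eqref{eq: discrete_velocity}; taking $n = 0$ and adding $\|v_i(0)\|$ bounds $\|v_i^{\infty}\|$ and confirms finiteness, so the infinite series defining $v_i^{\infty}$ converges absolutely.

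The argument is essentially a routine telescoping-plus-geometric-sum estimate, so I do not anticipate a genuine obstacle. The one point requiring a little care is the passage from the per-particle increment to the ensemble Frobenius norm: the decaying quantity supplied by the flocking theorem is the global functional $\|\Delta^v(n)\|_F$, not the individual differences $\|\Delta^v_{ji}(n)\|$, so one must control each particle's drift by the ensemble functional, and it is the Cauchy--Schwarz step (rather than the unit-sum normalization, which only yields the sharper but differently-shaped $\max_j\|\Delta^v_{ji}(n)\|$ bound) that dictates the $\sqrt{N}$ prefactor appearing in the final estimate.
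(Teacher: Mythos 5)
Your proposal is correct and follows essentially the same route as the paper's proof: the same telescoped representation of $v_i(n)$, the same definition of $v_i^\infty$ as the full series, the same bound $\phi_{ij}\le 1$ followed by Cauchy--Schwarz over $j$ producing the $\sqrt{N}\,\|\Delta^v(m)\|_F$ factor, and the same geometric tail sum via the decay estimate \eqref{eq: discrete_v_flocking}. The only cosmetic difference is that you bound single-step increments and telescope, while the paper bounds the tail sum directly; these are equivalent.
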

\begin{proof}
Note that $v_i(n)$ satisfies \begin{align*} 
    v_i(n) = v_i(0) + h\kappa \sum_{m = 0}^{n-1}\sum_{j = 1}^N \phi_{ij}(m)\left(v_j(m) - v_i(m)\right).
\end{align*}
Now, we set 
\begin{equation}  \label{eq: discrete_v_inf}
v_i^{\infty} := v_i(0) + h\kappa \sum_{m = 0}^{\infty}\sum_{j = 1}^N \phi_{ij}(m) \left(v_j(m) - v_i(m)\right), \quad \phi_{ij}(m):= \frac{a_{ij}(m)}{\sum_{k = 1}^N a_{ik}(m)}.
\end{equation}
Since $\phi_{ij} \leq 1$ and the summand in the summation decays exponentially to zero by \cref{thm: flocking}, the right-hand side of \eqref{eq: discrete_v_inf} is well-defined. More specifically, we denote the summand by $b_m\in\R^d$ and note that
    \begin{align*}
    \begin{aligned}
      \|b_m\|^2&= \left\|\sum_{j = 1}^N \phi_{ij}(m)\left(v_j(m) - v_i(m)\right)\right\|^2 
      \le N \sum_{j=1}^N \| \Delta^v_{ij}(m)  \|^2
          \le N  \| \Delta^v(m)  \|_F^2\\
      &\le  N  \| \Delta^v(0)  \|_F^2 e^{-2C\kappa \psi(M) mh}\,.
     \end{aligned}
    \end{align*}
    Hence, $\sum_{m=0}^\infty b_m$ is absolutely convergent and so $v_i^\infty$ in \eqref{eq: discrete_v_inf} is well-defined. 
    We can then compute directly
 \begin{align*}
    \|v_i^{\infty} - v_i(n)\| &= \left\| h\kappa\sum_{m = n}^{\infty}\sum_{j=1}^N \phi_{ij}(m) \left(v_j(m) - v_i(m)\right) \right\| 
     \leq h\kappa \sum_{m = n}^{\infty} \sum_{j = 1}^N \left\| v_j(m) - v_i(m) \right\| \\ 
    & \leq h\kappa \sqrt{N} \sum_{m = n}^{\infty} \|\Delta^v(m)\|_F 
     \leq h\kappa \sqrt{N} \sum_{m = n}^{\infty}\|\Delta^v(0)\|_F e^{-C\kappa \psi(M) mh}\\
    & =\frac{ h\kappa \sqrt{N} \|\Delta^v(0)\|_F  e^{ - C\kappa \psi(M)h n}}{1- e^{ - C\kappa \psi(M)h}  } \,.
\end{align*}
Hence, for fixed $h>0$,
\begin{align*}
    \lim_{n\to\infty}\|v_i^{\infty} - v_i(n)\| =0\,.
\end{align*}
\end{proof}
\begin{remark}
    The asymptotic flocking velocity exists, although it cannot be expressed explicitly in terms of the initial configuration. This is similar to the continuous MT model as seen in \cref{cor: continuous_v_limit}.
\end{remark}
\subsection{Uniform-in-time continuous limit} \label{sec:3.3}
In this subsection, we study the uniform-in-time transition from the discrete model:
\begin{equation}\label{C-1}
	\begin{cases}
		\displaystyle x^h_i(n+1) = x^h_i(n)+hv^h_i(n)\,, \quad n \geq 0, \quad i \in [N], \\
		\displaystyle v^h_i(n+1) = v^h_i(n)+h\kappa \sum_{j=1}^{N} {\phi}^h_{ij}(n) \big(v^h_j(n)-v^h_i(n)\big),
	\end{cases}
\end{equation}
to the continuous model:
\begin{equation}\label{C-2}
\begin{cases}
 \displaystyle {\dot x}_i = v_i\,, \quad t > 0, \quad i \in [N], \\
 \displaystyle  {\dot v}_i =  \kappa \sum_{j = 1}^N \phi_{ij}(v_j - v_i),
 \end{cases}
\end{equation}
as the time-step $h$ vanishes. For notational simplicity, we set 
\[ X^h := (x_1^h, \ldots, x_N^h), \quad V^h := (v_1^h, \ldots, v_N^h). \]
We first establish the continuous transition for $h\to 0$ in any finite time interval (see \cref{cor:finitetime-limit} below) : for any fixed $T>0$,
\begin{equation*} 
\lim_{h \to 0+} \sup_{0 \leq n < \floor*{\frac{T}{h}}}  \Big(  \|X^h(n) - X(nh) \| +  \|V^h(n) - V(nh)\| \Big) = 0, 
\end{equation*}
where $\lfloor x\rfloor $ denotes the largest integer less than equal to $x$. 

In order to extend this result to a uniform-in-time statement, we cannot hope to achieve decay to zero for $\|X^h(n) - X(nh) \|$ uniformly in time due to the fact that our uniform-in-time flocking result only holds in shape discrepancy and not for positions and velocities directly. Therefore, we introduce the following definition.
\begin{definition}\label{def: continuous_transition}
    The discrete system \eqref{eq: discrete_MT} converges to \eqref{eq: conti_MT} \emph{uniformly in time} if for some class of solutions $(X^h, V^h)$ and $(X, V)$ to \eqref{C-1} and \eqref{C-2}, respectively, there exists a constant $c\ge 0$ only depending on the model parameters, choice of communication function $a$ and initial condition $(X(0), V(0))=(X^h(0), V^h(0))$ such that 
 \[ \limsup_{h \rightarrow 0} \sup_{0 \leq n < +\infty}  \|\Delta^{x,h}(n) - \Delta^x(nh)\|_F \le C\,,\quad
  \limsup_{h \rightarrow 0} \sup_{0 \leq n < +\infty}   \|V^h(n) - V(nh)\|  = 0\,.
 \]
\end{definition}
In order to derive such a result for an infinite time interval, we can use the flocking theorem (\cref{thm: flocking}) for the discrete MT model, and its continuous counterpart \cite[Proposition 2.9]{MT14}. This was also the approach that the authors in \cite{Ha_Zhang_CS_continuous_transition} used to show the discrete to continuous uniform-tin-time transition for the Cucker-Smale model.

We start by recalling a general finite-time convergence result. Consider the following Cauchy problems for a general first-order ODE system and its corresponding discrete system:
 \begin{equation}  \label{eq: discrete_general}
        \begin{cases}
         \displaystyle   \frac{dy}{dt} = f(y), \quad  0 < t < T, \\ 
        \displaystyle   y(0) = y_0,
        \end{cases} 
        \quad \begin{cases}
            y_{n+1} = y_n + hf(y_n). \quad 0 <  n  < \lfloor \frac{T}{h} \rfloor, \\  
            y(0) = y_0.
        \end{cases}
    \end{equation}
For $R > 0$, we set 
\[ \Omega:= \{y: \|y - y_0\|< R\}. \]
\vspace{0.2cm}
Suppose that the vector field $f$ and discrete solution satisfy the following conditions: 
\begin{itemize}
        \item
        ${(\mathcal A}1)$:~The vector field $f$ is Lipschitz continuous on the open set $\Omega$:
         \[
            \mathcal{L}_f := \sup_{y_1 \neq y_2, y_1, y_2 \in \Omega} \frac{|f(y_2) - f(y_1)|}{|y_2 - y_1|} < \infty\,.
        \]
        \item 
         ${(\mathcal A}2)$:~The discrete solution $(y_n)$ satisfies
         \[
            \|y_n - y_0\| \leq R, \quad \forall~n = 0, 1, \cdots, \floor*{\frac{T}{h}}.
        \]
    \end{itemize} 
In the following proposition, we recall finite-time convergence from the discrete model to its continuous counterpart stated in \eqref{eq: discrete_general}. 
\begin{proposition} \label{prop: finite_transition}
Suppose that the conditions  ${(\mathcal A}1)$ and  ${(\mathcal A}2)$ hold for some $T>0$. Then the following assertions hold.
    \begin{enumerate}
        \item For the truncation error $\mathcal{E}_1^h(n)$ defined by
        \begin{align*}
            \mathcal{E}_1^h(n):= \left\|\frac{dy}{dt}\bigg\rvert_{t = nh} - \frac{y((n+1)h - y(nh)}{h}\right\|\,.
        \end{align*}
        we have \begin{align}\label{eq: finite_e1}
            \lim_{h \rightarrow 0 }\max_{0 \leq n \leq \floor*{T/h}}\mathcal{E}_1^h(n) = 0\,. 
        \end{align}
        \item The error $\mathcal{E}_2^h(n)$ defined by
         \begin{align*}
            \mathcal{E}_2^h(n):= \|y(nh) - y_n\|
        \end{align*} 
        can be controlled by the truncation error $\mathcal{E}_1^h(n):$
        \begin{align} \label{eq: finite_e2}
            \mathcal{E}_2^h(n) \leq \frac{\max_{0 \leq n \leq \floor*{T/h}}\mathcal{E}_1^h(n)}{\mathcal{L}_f}\left(  e^{\mathcal{L}_f nh} - 1\right)\,, \quad 0 \leq n \leq \floor*{\frac{T}{h}}\,.
        \end{align}
    \end{enumerate}
\end{proposition}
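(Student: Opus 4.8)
The plan is to treat the two assertions in sequence: the consistency estimate \eqref{eq: finite_e1} concerns only the continuous trajectory, whereas the global error bound \eqref{eq: finite_e2} is then extracted from \eqref{eq: finite_e1} by a one-step error-propagation (discrete Grönwall) argument. Assertion (2) uses assertion (1) as its only input beyond $(\mathcal{A}1)$ and $(\mathcal{A}2)$.

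For (1), I would first rewrite the difference quotient via the fundamental theorem of calculus. Since $y$ solves $\dot y = f(y)$, we have $y((n+1)h) - y(nh) = \int_{nh}^{(n+1)h} f(y(s))\,ds$ and $\frac{dy}{dt}\big|_{t=nh} = f(y(nh))$, so that
\[
\mathcal{E}_1^h(n) = \left\| \frac{1}{h}\int_{nh}^{(n+1)h}\bigl(f(y(nh)) - f(y(s))\bigr)\,ds \right\| \le \sup_{s\in[nh,(n+1)h]} \|f(y(nh)) - f(y(s))\|.
\]
Because $f$ is continuous and $y$ is therefore a $C^1$ solution, the map $s\mapsto f(y(s)) = \dot y(s)$ is uniformly continuous on the compact interval $[0,T]$; denoting its modulus of continuity by $\omega$, the right-hand side is bounded by $\omega(h)$ uniformly in $n$, and $\omega(h)\to0$ as $h\to0$, which gives \eqref{eq: finite_e1}. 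Alternatively, invoking $(\mathcal{A}1)$ together with the bound $\|f(y(s))\| \le \|f(y_0)\| + \mathcal{L}_f R$ valid on $\Omega$, one obtains the quantitative rate $\mathcal{E}_1^h(n) \le \tfrac12 \mathcal{L}_f(\|f(y_0)\|+\mathcal{L}_f R)\, h$, which also yields the claim.

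For (2), set $e_n := \mathcal{E}_2^h(n)$ and $E := \max_{0\le n\le \floor*{T/h}} \mathcal{E}_1^h(n)$. Writing the continuous solution in the one-step form $y((n+1)h) = y(nh) + hf(y(nh)) + h\tau_n$ with $\|\tau_n\| = \mathcal{E}_1^h(n)$, and subtracting the scheme $y_{n+1} = y_n + hf(y_n)$, I obtain
\[
e_{n+1} \le e_n + h\|f(y(nh)) - f(y_n)\| + hE \le (1 + h\mathcal{L}_f)\,e_n + hE,
\]
where the Lipschitz bound $(\mathcal{A}1)$ is applied to the pair $y(nh), y_n$. Since $e_0 = \|y(0)-y_0\| = 0$, a straightforward induction solves this linear recursion as $e_n \le \frac{E}{\mathcal{L}_f}\bigl((1+h\mathcal{L}_f)^n - 1\bigr)$, and the elementary inequality $1 + h\mathcal{L}_f \le e^{h\mathcal{L}_f}$ upgrades this to $e_n \le \frac{E}{\mathcal{L}_f}(e^{\mathcal{L}_f nh} - 1)$, which is exactly \eqref{eq: finite_e2}.

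The step I expect to require the most care is the legitimacy of applying the Lipschitz estimate at every index: $(\mathcal{A}1)$ only guarantees it for arguments in $\Omega$. Condition $(\mathcal{A}2)$ places the discrete iterates $y_n$ in $\overline{\Omega}$, but one must also ensure that the continuous trajectory $y(nh)$ lies in $\Omega$ (or that $f$ remains Lipschitz on a common region containing both), which I would record as part of the standing finite-time setup or verify by a short continuation argument. A minor bookkeeping point is that the index $n = \floor*{T/h}$ forces evaluation of $y$ at $(n+1)h$, possibly slightly beyond $T$, so I would assume the solution is defined on a marginally larger interval.
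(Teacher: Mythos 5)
Your proposal is correct and is essentially the argument the paper relies on: the paper does not prove \cref{prop: finite_transition} itself but cites \cite[Theorem 12.2]{Suli_Mayers_2003}, whose proof is exactly your consistency estimate followed by the one-step recursion $e_{n+1}\le(1+h\mathcal{L}_f)e_n+hE$ and the bound $1+h\mathcal{L}_f\le e^{h\mathcal{L}_f}$. Your two flagged caveats (that the continuous trajectory must also remain in the region where $f$ is Lipschitz, and the evaluation at $(n+1)h$ slightly past $T$) are legitimate bookkeeping points shared by the textbook argument and handled in the paper's application via \cref{lem:A1-A2} and the choice of $R$.
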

\begin{proof}
    For the proof, refer to \cite[Chapter 12,Theorem 12.2]{Suli_Mayers_2003}.
\end{proof}
In fact, in some situations, the condition ${(\mathcal A}2)$ follows from the condition ${(\mathcal A}1)$ directly, as can be seen in the following lemma.
\begin{lemma}\label{lem:A1-A2}
    If ${(\mathcal A}1)$ is satisfied for 
$
R> \frac{1}{\mathcal{L}_f}\left((1+\mathcal{L}_f h)^{\floor*{\frac{T}{h}}}-1\right) \|f(y_0)\|$,
then ${(\mathcal A}2)$ holds. 
\end{lemma}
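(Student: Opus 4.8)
The plan is to prove the slightly stronger statement that $\|y_n - y_0\| < R$ with \emph{strict} inequality for every $n = 0, 1, \dots, \floor*{T/h}$, which in particular yields $(\mathcal A2)$. I would argue by induction on $n$, writing $a_n := \|y_n - y_0\|$. The whole point is a bootstrapping argument: the Euler update $y_{n+1} = y_n + h f(y_n)$ gives $a_{n+1} \le a_n + h\|f(y_n)\|$, but the only available control on $\|f(y_n)\|$ is the Lipschitz bound $(\mathcal A1)$, which is valid \emph{only} when $y_n \in \Omega$, i.e. only when $a_n < R$. Since $a_n < R$ is exactly what we are trying to prove, this circularity is the main (and essentially only) obstacle; it is resolved by carrying the inequality $a_n < R$ along as part of the induction hypothesis.

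First I would record the one-step recursive inequality. Assuming inductively that $y_n \in \Omega$, the estimate $(\mathcal A1)$ gives $\|f(y_n)\| \le \|f(y_0)\| + \mathcal{L}_f a_n$, so that
\[ a_{n+1} \le (1 + h\mathcal{L}_f)\, a_n + h\|f(y_0)\|. \]
Second, I would solve this affine recursion. Since $a_0 = 0$, unrolling the inequality (or a short inner induction) together with the identity $(1+h\mathcal{L}_f) - 1 = h\mathcal{L}_f$ for summing the geometric series yields
\[ a_n \le \frac{\|f(y_0)\|}{\mathcal{L}_f}\left((1 + h\mathcal{L}_f)^{n} - 1\right). \]
One checks directly that this closed form is preserved under the one-step inequality, which is what makes the induction self-consistent.

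Finally, I would close the induction using the hypothesis on $R$. For every index $n \le \floor*{T/h}$ the right-hand side above is nondecreasing in $n$, hence bounded by
\[ a_n \le \frac{\|f(y_0)\|}{\mathcal{L}_f}\left((1 + h\mathcal{L}_f)^{\floor*{T/h}} - 1\right) < R, \]
where the strict inequality is precisely the assumed lower bound on $R$. Thus each new iterate satisfies $a_{n+1} < R$, so $y_{n+1} \in \Omega$ and the Lipschitz estimate $(\mathcal A1)$ remains applicable at the next step, keeping the induction running all the way to $n = \floor*{T/h}$. The delicate point to be careful about is exactly this: one must verify $y_n \in \Omega$ \emph{before} invoking $(\mathcal A1)$, and the threshold on $R$ is calibrated so that the accumulated growth factor $(1+h\mathcal{L}_f)^{\floor*{T/h}}$ never drives the iterate out of $\Omega$ within the window $0 \le n \le \floor*{T/h}$.
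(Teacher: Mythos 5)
Your proof is correct and follows essentially the same route as the paper's: derive the one-step affine recursion $\|y_{n+1}-y_0\|\le (1+h\mathcal{L}_f)\|y_n-y_0\|+h\|f(y_0)\|$ from the Lipschitz bound, unroll the geometric series to get $\|y_n-y_0\|\le \frac{\|f(y_0)\|}{\mathcal{L}_f}\left((1+h\mathcal{L}_f)^{n}-1\right)$, and invoke the hypothesis on $R$. If anything, your version is slightly more careful than the paper's, which applies $(\mathcal{A}1)$ at each step without explicitly verifying $y_n\in\Omega$ first; your induction hypothesis $\|y_n-y_0\|<R$ makes this bootstrapping explicit and closes that small gap.
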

\begin{proof}
    Indeed, the relation
\[ \|f(y_k)\|\le \mathcal{L}_f\|y_k-y_0\|+\|f(y_0)\| \]
implies
\begin{align*}
    \|y_n-y_0\|&\le (1+\mathcal{L}_f h)^k\|y_{n-k}-y_0\|+h\|f(y_0)\|\sum_{l=0}^{k-1} (1+\mathcal{L}_f h)^l\\
    &= h\|f(y_0)\|\sum_{l=0}^{n-1} (1+\mathcal{L}_f h)^l
    \le \frac{1}{\mathcal{L}_f}\left((1+\mathcal{L}_f h)^{\floor*{\frac{T}{h}}}-1\right) \|f(y_0)\|\,.
\end{align*}
\end{proof}
To apply the above results to the MT model, we consider \eqref{eq: discrete_general} with
\begin{equation*}
y=
    \begin{pmatrix}
x_1\\ \vdots \\ x_N \\ v_1 \\ \vdots \\ v_N
\end{pmatrix} 
 \in \R^{2Nd}\,,\quad
f(y)=
    \begin{pmatrix}
0_N & \Id_N\\
0_N & \kappa (A(X)-\Id_N)
\end{pmatrix} \cdot y
\,,\quad 
A_{ij}(X)= \frac{a(\|x_i - x_j \|)}{\sum_{k=1}^{N} a( \|x_i - x_k \|)}\,.
\end{equation*}

\begin{corollary}[Finite-time transition]\label{cor:finitetime-limit} 
%Suppose that \cref{as: a} is satisfied, and 
Let $(X(t),V(t))$ and $(X^h(n),V^h(n))$ be the global solutions to the continuous MT model \eqref{C-2} and the discrete MT model \eqref{C-1}, respectively, such that the initial condition $\|y_0\|$ for the discrete model is sufficiently small (depending on $\kappa, h, c_1, c_2, L_a$). Then, for all $0\le T < \infty$, we have the following finite-time transition from discrete to continuum:
    \begin{equation} \label{eq: finite_transition} 
\lim_{h \to 0+} \sup_{0 \leq n \le \floor*{\frac{T}{h}}}  \Big(  \|X^h(n) - X(nh) \| +  \|V^h(n) - V(nh)\| \Big) = 0\,. 
\end{equation}
\end{corollary}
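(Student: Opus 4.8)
The plan is to recognize that this corollary is nothing but an application of the general finite-time convergence machinery of \cref{prop: finite_transition} to the concrete stacked vector field $f$ displayed immediately above the statement, namely $\frac{dy}{dt}=f(y)$ with $y=(X,V)\in\R^{2Nd}$ and $f(y)=(V,\;\kappa(A(X)-\Id_N)V)$. Thus the entire argument reduces to verifying the two hypotheses $(\mathcal A1)$ and $(\mathcal A2)$ on a suitable ball $\Omega=\{y:\|y-y_0\|<R\}$ and then reading off the conclusion from the global error estimate \eqref{eq: finite_e2}. For $(\mathcal A2)$ I would not check it directly but instead invoke \cref{lem:A1-A2}, which produces $(\mathcal A2)$ from $(\mathcal A1)$ as soon as $R$ is large relative to $\|f(y_0)\|$.

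First I would verify $(\mathcal A1)$, i.e.\ that $f$ is Lipschitz on $\Omega$. The top block $y\mapsto V$ is linear, hence Lipschitz with constant $1$. For the bottom block it suffices to bound $\|A(X_1)V_1-A(X_2)V_2\|$ for $y_1=(X_1,V_1),\,y_2=(X_2,V_2)\in\Omega$; splitting by add-and-subtract gives $\|A(X_1)(V_1-V_2)\|+\|(A(X_1)-A(X_2))V_2\|$. The first term is controlled by the row-stochasticity and boundedness of $A$ (its entries are the $\phi_{ij}$, so each row sums to $1$), while the second is controlled by \cref{lem:a_lip}, which supplies the Lipschitz dependence of the entries $\phi_{ij}$ on the positions with constant $\phiLip$, combined with the a priori bound $\|V_2\|\le R+\|y_0\|$ valid on $\Omega$. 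This yields a finite $\mathcal{L}_f=\mathcal{L}_f(R,\|y_0\|)$, with an explicit growth in the radius $R$ coming from the velocity bound.

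The main subtlety — and the step I would treat most carefully — is the apparent circularity between $R$ and $\mathcal{L}_f$: the Lipschitz constant grows with $R$, whereas the radius required for $(\mathcal A2)$ in \cref{lem:A1-A2} grows with $\mathcal{L}_f$. I would break this by fixing $R$ first, which freezes a finite value $\mathcal{L}_f$, and then exploiting smallness of the initial datum. Since $\|f(y_0)\|\lesssim (1+\kappa)\|y_0\|$ and the factor $(1+\mathcal{L}_f h)^{\floor*{T/h}}$ is bounded by $e^{\mathcal{L}_f T}$ uniformly in $h$, the threshold $\frac{1}{\mathcal{L}_f}\big((1+\mathcal{L}_f h)^{\floor*{T/h}}-1\big)\|f(y_0)\|$ appearing in \cref{lem:A1-A2} is at most a constant times $\frac{1}{\mathcal{L}_f}(e^{\mathcal{L}_f T}-1)\|y_0\|$, independently of $h$. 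Hence choosing $\|y_0\|$ sufficiently small (depending only on the fixed $T,R$, and thus on $\kappa,c_1,c_2,L_a$) makes this threshold strictly smaller than $R$, so $(\mathcal A2)$ holds simultaneously for all small $h$. I would also note that the exact trajectory $y(\cdot)$ remains in $\Omega$ on $[0,T]$ by a standard finite-time Gronwall bound, so that $f$ is Lipschitz along both the continuous and discrete trajectories.

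With $(\mathcal A1)$ and $(\mathcal A2)$ in hand, \cref{prop: finite_transition} applies directly: the truncation error satisfies $\max_{0\le n\le\floor*{T/h}}\mathcal{E}_1^h(n)\to0$ as $h\to0$ by \eqref{eq: finite_e1}, and \eqref{eq: finite_e2} bounds the global error by $\mathcal{E}_2^h(n)\le \frac{\max_n\mathcal{E}_1^h(n)}{\mathcal{L}_f}(e^{\mathcal{L}_f T}-1)$ uniformly for $n\le\floor*{T/h}$, so letting $h\to0$ forces $\sup_{n\le\floor*{T/h}}\mathcal{E}_2^h(n)\to0$. Finally, since $\mathcal{E}_2^h(n)=\|y(nh)-y_n\|$ and $\|X^h(n)-X(nh)\|+\|V^h(n)-V(nh)\|\le\sqrt{2}\,\|y(nh)-y_n\|$ by equivalence of the stacked Euclidean norm, the desired limit \eqref{eq: finite_transition} follows. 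I expect the only genuine obstacle to be the $R$–$\mathcal{L}_f$ circularity described above; the remainder is a routine invocation of the quoted convergence theorem together with \cref{lem:a_lip}.
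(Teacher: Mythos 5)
Your proposal is correct and takes essentially the same route as the paper: verify $({\mathcal A}1)$ on a ball $\Omega$ via the same add-and-subtract decomposition using \cref{as: a} (the paper bounds $\|A(X)(V-\bar V)\|\le (c_2/c_1)\|V-\bar V\|$ from the entrywise bounds rather than row-stochasticity, an immaterial difference), deduce $({\mathcal A}2)$ from \cref{lem:A1-A2} by smallness of $\|y_0\|$ and $\|f(y_0)\|$, and conclude by \cref{prop: finite_transition}. Your resolution of the $R$--$\mathcal{L}_f$ circularity (fix $R$ first, which freezes $\mathcal{L}_f$, then shrink $\|y_0\|$ so the threshold in \cref{lem:A1-A2} falls below $R$) is a valid minor variant of the paper's device, which instead takes the radius $R=R^*(\|y_0\|)$ small enough that the position term no longer attains the maximum, making $\mathcal{L}_f=\sqrt{1+\kappa^2+(c_2/c_1)^2}$ independent of $R$; both hinge on the same smallness hypothesis and yield the identical conclusion.
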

\begin{proof}
To show that the assumptions for \cref{prop: finite_transition} are satisfied, note that for any two-particle ensembles $y,\bar y \in \Omega$, it holds
\begin{align*}
    \|f(y)-f(\bar y)\|^2 
    &\le \|V-\bar V\|^2 + \kappa^2 \|A(X)(V-\bar V)\|^2 + \kappa^2 \|(A(X)-A(\bar X)) \bar V\|^2 + \kappa^2\|V-\bar V\|^2\,.
\end{align*}
Using condition (1) in \cref{as: a}, we have \[
\|A(X)(V-\bar V)\|^2\le (c_2/c_1)^2\|V-\bar V\|^2\,. \] Thanks to condition (2), the relation
$$
|A_{ij}(X)-A_{ij}(\bar X)|\le
\frac{L_a}{Nc_1}\left[
\left(1+\frac{c_2}{c_1}\right) \|x_i-\bar x_i\| + \|x_j-\bar x_j\| + \frac{c_2}{Nc_1}\sum_k \|x_k-\bar x_k\|
\right]
$$
implies
\begin{align*}
    \|(A(X)-A(\bar X)) \bar V\|^2 &\le N\sum_{ij}|A_{ij}(X)-A_{ij}(\bar X)|^2 \|\bar v_j\|^2 \\
    &\le  4\left(1+\frac{c_2}{c_1}\right)^2\left(\frac{L_a }{c_1}\right)^2 (R+\|y_0\|)^2\|X-\bar X\|^2\,.
\end{align*}
Hence, the hypothesis ${(\mathcal A}1)$ is satisfied with 
\begin{equation*}
    \mathcal{L}_f= \left(\max\left\{\left(1+\kappa^2+\left(\frac{c_2}{c_1}\right)^2\right), 4 \kappa^2 \left(1+\frac{c_2}{c_1}\right)^2\left(\frac{L_a }{c_1}\right)^2 (R+\|y_0\|)^2\right\}\right)^{1/2} <\infty\,.
\end{equation*}
Next, we use \cref{lem:A1-A2}, and denote $R^*$ by
\begin{align*}
    R^*:= \frac{\sqrt{1+\kappa^2+\left(\frac{c_2}{c_1}\right)^2}}{2\kappa\left(1+\frac{c_2}{c_1}\right)\frac{L_a}{c_1}}-\|y_0\|\,,
\end{align*}
which is positive for sufficiently small $\|y_0\|$. Then for any $0<R\le R^*$, we have 
\begin{equation*}
    \mathcal{L}_f= \sqrt{1+\kappa^2+\left(\frac{c_2}{c_1}\right)^2}\,.
\end{equation*}
For the condition in \cref{lem:A1-A2} to hold and choosing $R=R^*$,  we require
\begin{align*}
R^*=\frac{\mathcal{L}_f}{2\kappa\left(1+\frac{c_2}{c_1}\right)\frac{L_a}{c_1}}-\|y_0\| > \frac{1}{\mathcal{L}_f}\left((1+\mathcal{L}_f h)^{\lfloor  \frac{T}{h} \rfloor}-1\right) \|f(y_0)\|\,,
\end{align*}
which is indeed satisfied for sufficiently small $\|y_0\|$ and $\|f(y_0)\|$. We conclude that ${(\mathcal A}2)$ holds, noting that 
$ \|f(y_0)\|^2\le \left(1+\kappa^2\left(\frac{c_2}{c_1}\right)^2\right)\|V_0\|^2$ thanks to condition (1) in \cref{as: a}.
Hence, it follows from \cref{prop: finite_transition} that for all $0 \leq n \leq \lfloor  \frac{T}{h} \rfloor,$ 
    \begin{align*}
        \mathcal{E}_2^h(n)&:=\left( \|X^h(n) - X(nh)\|^2 + \|V^h(n) - V(nh)\|^2\right)^{1/2} \leq  \frac{\max_{0 \leq n \leq \floor*{T/h}}\mathcal{E}_1^h(n)}{\mathcal{L}_f}\left(  e^{\mathcal{L}_f nh} - 1\right)
    \end{align*}
    by \eqref{eq: finite_e2}. Thus, we have
    \begin{align*}
        \limsup_{h \rightarrow 0} \sup_{0 \leq n \le \floor*{\frac{T}{h}}} \mathcal{E}_2^h(n) &\leq \limsup_{h \rightarrow 0} \sup_{0 \leq n \le \floor*{\frac{T}{h}}}\frac{\max_{0 \leq n \le \floor*{\frac{T}{h}}}\mathcal{E}_1^h(n)}{\mathcal{L}_f}\left(  e^{\mathcal{L}_f nh} - 1\right) \\ &\leq  \lim_{h \rightarrow 0 } \frac{\max_{0 \leq n \le \floor*{\frac{T}{h}}}\mathcal{E}_1^h(n)}{\mathcal{L}_f} \left(  e^{\mathcal{L}_f T} - 1\right)   = 0
    \end{align*} 
    by \eqref{eq: finite_e1}.
\end{proof}
Note that in the above estimation, there is an exponential dependence on the upper bound of the error term with respect to the time horizon $T$. Hence, such analysis does not hold for an infinite time interval. On the other hand, if we consider the shape discrepancy for positions instead of positions themselves, we can achieve the uniform-in-time transition from the discrete MT model to the continuous one as $h \to 0$ for an infinite time interval as stated in \cref{def: continuous_transition}. We present this result in the following theorem.
\begin{theorem}[uniform-in-time transition] \label{T3.10}
    %Suppose  \cref{as: a} holds. 
    Let $(X^h, V^h)$ and $(X, V)$ be global solutions to \eqref{C-1} and \eqref{C-2}, respectively. Assume the condition from \cref{thm: conti_flocking} hold for $(X(0), V(0))$ and the conditions from \cref{thm: flocking} hold for $(X^h(0), V^h(0)):=(X(0), V(0))$. Then, we have
 \begin{gather*}
 \limsup_{h \rightarrow 0} \sup_{0 \leq n < +\infty}  \|\Delta^{x,h}(n) - \Delta^x(nh)\|_F \le c_0 \mathcal{D}(V(0))\,,\\
  \text{ and } \quad \limsup_{h \rightarrow 0} \sup_{0 \leq n < +\infty}   \|V^h(n) - V(nh)\|  = 0
  \end{gather*}
  for some constant $c_0=c_0(\kappa, N, a)$ independent of $n$ and $h$.
\end{theorem}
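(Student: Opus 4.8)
The plan is to glue the finite-time transition of \cref{cor:finitetime-limit} on a finite window to the exponential flocking decay on the complementary tail, the latter supplied by \cref{thm: flocking} together with \cref{cor: discrete_v_limit} on the discrete side and by \cref{thm: conti_flocking} together with \cref{cor: continuous_v_limit} on the continuum side; the two regimes are then merged by an $\varepsilon$-argument. First I would fix a cutoff time $T>0$, set $N_0:=\lfloor T/h\rfloor$, and split $\sup_{n\ge 0}$ into $\sup_{0\le n\le N_0}$ and $\sup_{n>N_0}$. On the window, \cref{cor:finitetime-limit} gives
\[
\lim_{h\to0}\sup_{0\le n\le N_0}\big(\|X^h(n)-X(nh)\|+\|V^h(n)-V(nh)\|\big)=0
\]
for each fixed $T$; since $\|\Delta^{x,h}(n)-\Delta^x(nh)\|_F\le 2\sqrt N\,\|X^h(n)-X(nh)\|$, this simultaneously controls the velocity difference and the position shape discrepancy on the window.

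For the tail $n>N_0$ I would interpose the asymptotic states. Let $v^{\infty}$, $v^{h,\infty}$ denote the continuum and discrete flocking velocities and let $\Delta^x_\infty$, $\Delta^{x,h}_\infty$ denote the limiting shapes, all well defined by the two corollaries above (the discrete shape limit by summing the increments $h\Delta^{v,h}(m)$, whose norms decay like $e^{-C\kappa\psi(M)mh}$). The triangle inequality then yields, for $n>N_0$,
\begin{align*}
\|V^h(n)-V(nh)\| &\le \|V^h(n)-v^{h,\infty}\| + \|v^{h,\infty}-v^{\infty}\| + \|v^{\infty}-V(nh)\|,\\
\|\Delta^{x,h}(n)-\Delta^x(nh)\|_F &\le \|\Delta^{x,h}(n)-\Delta^{x,h}_\infty\|_F + \|\Delta^{x,h}_\infty-\Delta^x_\infty\|_F + \|\Delta^x_\infty-\Delta^x(nh)\|_F.
\end{align*}
The outer terms in each line are genuine flocking tails: by \cref{cor: discrete_v_limit} and \cref{cor: continuous_v_limit} they are bounded, uniformly for small $h$ and all $n>N_0$, by multiples of $\mathcal{D}(V(0))\,e^{-C\kappa\psi(M)T}$ and $\mathcal{D}(V(0))\,e^{-\kappa a(\mathcal{D}^\infty)T}$ (the discrete prefactor $h\kappa/(1-e^{-C\kappa\psi(M)h})$ stays bounded as $h\to0$). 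Each middle term compares the two limiting states and is estimated by inserting the configuration at the cutoff, e.g.
\[
\|v^{h,\infty}-v^{\infty}\|\le\|v^{h,\infty}-V^h(N_0)\|+\|V^h(N_0)-V(T)\|+\|V(T)-v^{\infty}\|,
\]
where the first and third summands are again flocking tails and the middle one vanishes as $h\to0$ by the window estimate; the shape limit difference $\|\Delta^{x,h}_\infty-\Delta^x_\infty\|_F$ is treated identically.

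The hard part will be the interchange of the limits $h\to0$ and $n\to\infty$, i.e.\ the legitimacy of subsequently sending $T\to\infty$. The finite-time bound \eqref{eq: finite_e2} carries the factor $e^{\mathcal{L}_f T}$, and the admissibility hypotheses $(\mathcal{A}1)$–$(\mathcal{A}2)$ (through \cref{lem:A1-A2}) force the discrete orbit to remain in a fixed ball whose radius must dominate the positional drift, which grows like $\|V(0)\|T$. This is exactly what distinguishes the two conclusions. The velocities remain in the convex hull of the initial velocities for all $n$ (since $v_i(n+1)=(1-h\kappa)v_i(n)+h\kappa\sum_j\phi_{ij}(n)v_j(n)$ is a convex combination for $h\kappa\le1$), so they never leave a bounded set; recasting the comparison in the reduced shape–velocity variables, which are driven only by bounded quantities, lets the window be extended and drives the velocity residual to $0$ after $T\to\infty$. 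The absolute positions, on the other hand, drift and cannot be absorbed into a fixed ball, so only the translation-invariant shape is tractable; balancing the finite-time error $e^{\mathcal{L}_f T}$ against the flocking tail $\mathcal{D}(V(0))e^{-(\mathrm{rate})T}$ at an optimal cutoff $T_{\max}\sim c/\mathcal{D}(V(0))$ leaves the residual $c_0\,\mathcal{D}(V(0))$ with $c_0=c_0(\kappa,N,a)$.

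The main obstacle, and the step I would need to verify most carefully, is precisely this optimization: quantifying the largest admissible horizon $T$ for which $(\mathcal{A}2)$ survives under the crude velocity bound $\|\bar v_j\|\le R+\|y_0\|$ used in \cref{cor:finitetime-limit}, and checking that the resulting shape residual is genuinely $O(\mathcal{D}(V(0)))$ rather than larger. I expect the cleanest route is to replace that crude bound by the uniform-in-time velocity bound coming from the convex-hull property, so that $\mathcal{L}_f$ becomes independent of the ball radius $R$; this should both legitimize the velocity limit and make the constant $c_0$ explicit.
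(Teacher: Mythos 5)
Your proposal is correct, and for the velocity estimate it coincides with the paper's own proof: an $\epsilon$-argument that picks the horizon $\bar T$ from the two flocking tails (\cref{cor: continuous_v_limit}, \cref{cor: discrete_v_limit}), applies \cref{cor:finitetime-limit} on the window, and compares $V^{h,\infty}$ with $V^{\infty}$ by inserting the states at the cutoff --- exactly the chain \eqref{eq: est1}--\eqref{eq: v_transition_est}, including your remark that the prefactor $h\kappa/(1-e^{-C\kappa\psi(M)h})$ stays bounded as $h\to0$, which is precisely the paper's verification that $T_1$ can be chosen uniformly in $h$. For the position shape discrepancy your route is genuinely different: you interpose limiting shapes $\Delta^{x,h}_\infty$, $\Delta^{x}_\infty$ and cut in time, whereas the paper never forms limiting shapes; in \eqref{C-28} it bounds $\|\Delta^{x,h}_{ij}(n)-\Delta^{x}_{ij}(nh)\|$ by $h\sum_{k\ge0}\|\Delta^{v,h}_{ij}(k)-\Delta^{v}_{ij}(kh)\|+O(h^2)$ and controls the tail of the sum by the two exponential flocking series, the factor $h$ cancelling the geometric sum $(1-e^{-\tilde c h})^{-1}\sim (\tilde c h)^{-1}$ to yield $c_0=2N/\tilde c$ with $\tilde c=\min\{C\kappa\psi(M),\kappa a(\mathcal{D}^\infty)\}$, with no cutoff optimization at all (its ``window'' even uses a fixed number of steps $K$, so the finite-time result plays only an auxiliary role there). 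Your decomposition works too, but your final ``balancing'' step rests on a misreading: for fixed $T$ the window error $\mathcal{L}_f^{-1}\max_n\mathcal{E}_1^h(n)\,(e^{\mathcal{L}_f T}-1)$ already vanishes under $\limsup_{h\to0}$, so there is nothing to trade against $e^{\mathcal{L}_f T}$; any fixed admissible $T$ (even of order one) gives the residual $\lesssim N\mathcal{D}(V(0))e^{-\tilde c T}/\tilde c\le c_0\,\mathcal{D}(V(0))$, and no interchange of limits is needed --- iterated limits suffice ($h\to0$ at fixed $T$, then $T\to\infty$ for the velocity claim, $T$ fixed for the shape claim). Moreover the claimed scaling $T_{\max}\sim c/\mathcal{D}(V(0))$ does not follow from $(\mathcal{A}2)$: the constraint in \cref{lem:A1-A2} involves $\|f(y_0)\|\sim\|V(0)\|$ (the full velocity norm, not the diameter) and produces a logarithmic horizon; this is harmless for the stated bound but the heuristic is off. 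On the plus side, your convex-hull observation (that $v_i(n+1)=(1-h\kappa)v_i(n)+h\kappa\sum_j\phi_{ij}(n)v_j(n)$ is a convex combination when $h\kappa\le1$, so velocities stay uniformly bounded) is correct and addresses a point the paper leaves implicit, namely that \cref{cor:finitetime-limit} is invoked at horizons $\bar T(\epsilon)\to\infty$ even though its smallness hypothesis on the initial data degrades with $T$; making $\mathcal{L}_f$ independent of the ball radius along the lines you sketch would tighten both your argument and the paper's.
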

\begin{proof}
    By \cref{cor: continuous_v_limit} and \cref{cor: discrete_v_limit}, we have
 \[ \exists~V^{\infty} := \lim_{t \to \infty} V(t), \quad \exists~V^{h,\infty} := \lim_{n \to \infty} V^h(n). \]   
Fix $\epsilon>0$. Then by the flocking estimates for \eqref{C-1} and \eqref{C-2}, there exist times $T_1$ and $T_2$ such that for any $h>0$,
\begin{equation}  \label{eq: est1}
    \|V^h(n) - V^{h, \infty} \| < \frac{\epsilon}{9} \quad  
    \forall~n \geq \floor*{\frac{T_1}{h}} \quad \mbox{and} \quad 
    \|V(nh) - V^{\infty} \| < \frac{\epsilon}{9}  
    \quad  \forall~n \geq \floor*{\frac{T_2}{h}}.
\end{equation} 
The second statement above follows directly as the solution itself does not depend on $h$. For the first statement, however, we need to check carefully the interplay between $n$ and $h$. Indeed, from \eqref{eq: discrete_velocity} we have \begin{align*}
    \|V^h(n) - V^{h, \infty}\|^2 &= \sum_{i = 1}^N \|v_i^h(n) - v_i^{h, \infty} \|^2 
    \le\sum_{i = 1}^N \frac{ h^2\kappa^2 N \|\Delta^v(0)\|^2_F  e^{ - 2C\kappa \psi(M)h n}}{(1- e^{ - C\kappa \psi(M)h} )^2 } \\
    &= \frac{ h^2\kappa^2 N^2 \|\Delta^v(0)\|_F^2  e^{ - 2C\kappa \psi(M)h n}}{(1- e^{ - C\kappa \psi(M)h})^2 }. 
\end{align*}
For this expression to be controlled by $(\epsilon/9)^2$, we require
\begin{align*}
    n&> \frac{1}{C\kappa \psi(M) h}\, \left[ 
    \log\frac{9}{\epsilon} + \log\left(\frac{h\kappa N \|\Delta^v(0)\|_F}{1- e^{ - C\kappa \psi(M)h}} \right)
    \right]\\
   & = \frac{1}{2C\kappa \psi(M) h}\, \left[ 
    \log\frac{9}{\epsilon} + \log\left( \kappa N \|\Delta^v(0)\|_F\right) 
    + \log\left(\frac{h}{ C\kappa \psi(M)h + O(h^2)} \right)
    \right] = O\left(\frac{1}{h}\right)\,,
\end{align*}
and therefore such a $T_1$ exists for small enough $h$ as claimed. 
Now, we set 
\[ \bar T := \max\{{T_1, T_2 }\}. \]
Then we use the finite-time result in \cref{cor:finitetime-limit} to see that there exists a small enough $h_0=h_0(\epsilon)>0$ such that 
\begin{equation}\label{eq: est3}
	\|V^h(n) - V(nh)\| <  \frac{\epsilon}{9}
\end{equation} 
for all $0 < h < h_0$ and $0 \leq n \leq \floor*{\bar T/h}$. Choosing $\tilde n =  \floor*{\bar T/h},$ %for any fixed choice $0 \leq \tilde n \leq \floor*{\bar T/h}$ 
we can estimate the error between the discrete and continuum limit points via
\begin{equation}\label{eq: est4}
    \| V^{h,\infty} - V^\infty\| \leq \|V^{h, \infty} - V^h(\tilde n) \| 
    + \|V^h(\tilde n) - V(\tilde n h )\|  + \|V(\tilde n h) - V^\infty \|  
    < \frac{\epsilon}{3}
\end{equation} for all $0 < h < h_0$. 
Finally, we combine \eqref{eq: est1}, and \eqref{eq: est4} to find the uniform-in-time transition in velocities: 
\begin{equation}\label{eq: v_transition_est}
	\|V^h(n) - V(nh)\| \leq \|V^h(n) - V^{h,\infty} \| + \|V^{h, \infty} - V^\infty\|  + \|V^\infty - V(nh)\|  < \epsilon
\end{equation} 
for all $n \geq \floor*{\bar T/h}$, and $0 < h < h_0$. Combining with \eqref{eq: est3}, we get \begin{align*}
    \|V^h(n) - V(nh)\| < \epsilon
\end{align*}
for all $n \geq 0$, and $0 < h < h_0$.\\
Now, consider the shape discrepancy for positions:
\begin{align} \label{C-28}
\begin{aligned}
\|\Delta^{x,h}_{ij}(n) - \Delta^x_{ij}(nh)\| &=  \|((x_i^h(n) - x_j^h(n)) - (x_i(nh) - x_j(nh))\|\\
&= \left\|\sum_{k=0}^{n-1} h [(v_i(kh) - v_i^h(k)) - (v_j(kh) - v_j^h(k))] \right\|+ O(h^2)\\
&\leq h\sum_{k=0}^{n-1} \|\Delta_{ij}^{v,h}(k) - \Delta_{ij}^v(kh)\| + O(h^2)\\
&\leq h\sum_{k=0}^{\infty} \|\Delta_{ij}^{v,h}(k) - \Delta_{ij}^v(kh)\| + O(h^2).
\end{aligned}
\end{align}
Note we can use \cref{thm: flocking} and \cref{thm: conti_flocking} to obtain that for any $K \in \mathbb{N}$,
\begin{align*}
    \sum_{k=K}^\infty &\|\Delta^{v,h}_{ij}(k) -\Delta^v_{ij}(hk)\| \leq \sum_{k=K}^\infty \|\Delta^{v,h}_{ij}(k)\| + \sum_{k=K}^\infty\|\Delta^v_{ij}(hk)\|\\
    &\leq \sum_{k=K}^\infty\|\Delta^{v,h}(k)\|_F + \sum_{k=K}^\infty \mathcal{D}(V(hk))\\
    &\leq \sum_{k=K}^\infty\|\Delta^{v,h}(0)\|_F e^{-C\kappa \psi(M)kh} + \sum_{k=K}^\infty \mathcal{D}(V^0)e^{-\kappa a(\mathcal{D}^\infty)hk}\\
    &\leq \max \left\{\|\Delta^{v}(0)\|_F,\mathcal{D}(V(0))\right\}\frac{e^{-\tilde{c}hK}}{1-e^{-\tilde{c}h}}
    \le \frac{1}{h}\left(\frac{N\mathcal{D}(V(0))}{\tilde c } + O(h)\right)\,,
\end{align*}
where 
$\tilde{c} = \min\left\{C\kappa \psi(M), \kappa a(\mathcal{D}^\infty)\right\}$.
Together with \cref{cor:finitetime-limit} to bound the case $k\in\{0,...,K-1\}$, we conclude that for fixed $K \in \mathbb{N}$ and $h$ small enough 
\begin{align*}
\|\Delta^{x,h}_{ij}(n) - \Delta^x_{ij}(nh)\| \le \frac{2N\mathcal{D}(V(0))}{\tilde c }=:c_0 \mathcal{D}(V(0))\,,
\end{align*}
and so $\|\Delta^{x,h}(n) - \Delta^x(nh)\|_F$ 
is uniformly bounded for all $n\ge 0$ and small enough $h>0$.
\end{proof}
%%%%%%%%%%%%%%%%%%%%%%%%%%%
%
%
%	Section 4
%
%
%%%%%%%%%%%%%%%%%%%%%%%%%%%
\section{Uniform-in-time stability estimates} \label{sec:4}
\setcounter{equation}{0}
In this section, we study a version of uniform $\|\cdot\|_F$-stability for the discrete MT model with respect to initial data. As briefly discussed in the introduction, our estimate is not the typical type of stability estimate. Typical stability estimates bound the difference between two pairs of solutions over all time in terms of the difference in the initial data. Due to the loss of conservation of momentum in \eqref{eq: discrete_MT}, comparing two pairs of solutions directly is technically challenging. Hence, we consider instead the difference in shape discrepancies $\Delta^x, \Delta^v$ between solution pairs. In \cref{sec:3}, we derived the flocking estimate for these quantities and concluded uniform-in-time discrete to continuous transition based on this estimate. The flocking estimate is also a key ingredient to obtain uniform-in-time stability for the discrete MT model. Our stability estimate will guarantee that the structure of the solution is preserved if the initial shape discrepancies are close enough. Such structural preservation of solutions is analogous to orbital stability in hyperbolic conservation laws, also see \cref{rmk:orbital-stability} for more details. 
\newline

We start by introducing two assumptions needed for uniform stability.
\begin{itemize}
    \item 
    ($\mathcal{F}1$):~Initial data $(X^0,V^0)$ and $(\overline{X}^0, \overline{V}^0)$ satisfy
    \[\max\left\{\|\Delta^x(0)\|_F,\|\Delta^{\overline{x}}(0)\|_F \right\} < M =  \frac{1}{4N \| \phi \|_{\text{Lip}}}.\]
    \item 
    ($\mathcal{F}2$):~Coupling strength $\kappa$ is sufficiently large such that
    \begin{gather*}
\|\Delta^v(0)\|_F < \kappa \int_{\|\Delta^x(0)\|_F}^M \psi(s) ds\,, \qquad  \|\Delta^{\overline{v}}(0)\|_F < \kappa \int_{\|\Delta^{\overline{x}}(0)\|_F}^M  \psi(s) ds\,.
    \end{gather*}
\end{itemize}
By \cref{thm: flocking}, the framework $(\mathcal{F}1) - (\mathcal{F}2)$ implies the emergence of asymptotic flocking for $(X,V)$, $(\overline{X}, \overline{V})$ and their shape discrepancies.
\begin{theorem} 
\emph{(Uniform-in-time stability)} \label{thm: stability} 
    Suppose that the initial data $(X^0,V^0)$ and $(\overline{X}^0, \overline{V}^0)$ and coupling strength satisfy the framework $(\mathcal{F}1) -  (\mathcal{F}2)$, and    
    let $(X,V)=(X(n),V(n))_{n\in\N}$ and $(\overline{X}, \overline{V})=(\overline{X}(n), \overline{V}(n))_{n\in\N}$ be solutions to \eqref{eq: discrete_MT} with initial data $(X^0,V^0)$ and $(\overline{X}^0, \overline{V}^0)$, respectively. 
    For sufficiently small $h>0$, there exist $C(n,h)>0$ such that
    \begin{align*}
    &\left\|\Delta^x(n)-\Delta^{\overline{x}}(n)\right\|_F + \left\|\Delta^v(n)-\Delta^{\overline{v}}(n)\right\|_F\\
    &\hspace{2cm}\leq\left\|\Delta^x(0)-\Delta^{\overline{x}}(0)\right\|_F + \left\|\Delta^v(0)-\Delta^{\overline{v}}(0)\right\|_F+C(n,h), \quad n \geq 1,
    \end{align*}
    where $\lim_{h \to 0} C(n,h) = C_0$ for some constant $C_0>0$, only depending on the initial conditions $V(0)$ and $\overline{V}(0)$.
\end{theorem}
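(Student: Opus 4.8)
The plan is to run the same nonlinear-functional machinery as in \cref{prop: delta_xv_recurrence}, but now for the \emph{difference} of the two shape discrepancies. Write $P(n):=\|\Delta^x(n)-\Delta^{\overline{x}}(n)\|_F$ and $Q(n):=\|\Delta^v(n)-\Delta^{\overline{v}}(n)\|_F$. Subtracting the position updates of the two systems gives $\Delta^x_{ij}(n+1)-\Delta^{\overline{x}}_{ij}(n+1)=\big(\Delta^x_{ij}(n)-\Delta^{\overline{x}}_{ij}(n)\big)+h\big(\Delta^v_{ij}(n)-\Delta^{\overline{v}}_{ij}(n)\big)$, so that after summing over $(i,j)$ one obtains the clean bound $P(n+1)\le P(n)+hQ(n)$, exactly as in $\eqref{New-2}$. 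The whole difficulty is therefore concentrated in the velocity recursion.

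For the velocity part I would subtract the two $v$-updates and use the identity derived inside the proof of \cref{prop: delta_xv_recurrence} to get
\[
\Delta^v_{ij}(n+1)-\Delta^{\overline{v}}_{ij}(n+1)=(1-h\kappa)\big(\Delta^v_{ij}(n)-\Delta^{\overline{v}}_{ij}(n)\big)+h\kappa\,E_{ij}(n),
\]
and split the coupling term as $E_{ij}=E_{ij}^{(1)}+E_{ij}^{(2)}$, where
\[
E_{ij}^{(1)}=\sum_{l}(\phi_{il}-\phi_{jl})\big(\Delta^v_{li}-\Delta^{\overline{v}}_{li}\big),\qquad
E_{ij}^{(2)}=\sum_{l}\big[(\phi_{il}-\phi_{jl})-(\overline{\phi}_{il}-\overline{\phi}_{jl})\big]\Delta^{\overline{v}}_{li}.
\]
For $E^{(1)}$ I would reuse \cref{lem:a_lip} together with the summation bookkeeping of \cref{lem: triple_delta}, and crucially invoke the flocking bound $\|\Delta^x(n)\|_F\le M$ from \cref{thm: flocking}(i): since $N\|\phi\|_{\text{Lip}}M=\tfrac14$, this term is a genuine contraction, $\|E^{(1)}(n)\|_F\le N\|\phi\|_{\text{Lip}}\|\Delta^x(n)\|_F\,Q(n)\le\tfrac14 Q(n)$, and summing over $(i,j)$ yields $Q(n+1)\le\big(1-\tfrac34 h\kappa\big)Q(n)+h\kappa\|E^{(2)}(n)\|_F$.

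The main obstacle is the source term $E^{(2)}$, which compares the normalized weights of the two configurations. The essential observation is that $\phi_{il}$ depends only on the relative positions $\{x_i-x_k\}_k$, so $|\phi_{il}-\overline{\phi}_{il}|$ is controlled by the \emph{position shape discrepancies} $\|\Delta^x_{ik}-\Delta^{\overline{x}}_{ik}\|$ rather than by absolute positions. I would first prove a two-configuration analogue of \cref{lem:a_lip}: repeating its add-and-subtract computation with \eqref{eq: phi_bound}--\eqref{eq: phi_lip} gives an estimate of the form $|\phi_{il}-\overline{\phi}_{il}|\le C_\phi\big(\|\Delta^x_{il}-\Delta^{\overline{x}}_{il}\|+\tfrac1N\sum_k\|\Delta^x_{ik}-\Delta^{\overline{x}}_{ik}\|\big)$. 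Feeding this into $E^{(2)}$ and organizing the triple sum by Cauchy--Schwarz and Jensen (as in \cref{lem: triple_delta}) produces the factorized bound $\|E^{(2)}(n)\|_F\le C_\phi\,P(n)\,\|\Delta^{\overline{v}}(n)\|_F$. The delicate point is precisely this factorization; combined with \cref{thm: flocking}(ii) for the barred solution, $\|\Delta^{\overline{v}}(n)\|_F\le\|\Delta^{\overline{v}}(0)\|_F e^{-C\kappa\psi(M)nh}$, so that the source decays exponentially in $n$.

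Finally I would close the recursion. Bounding $P(n)\le\|\Delta^x(n)\|_F+\|\Delta^{\overline{x}}(n)\|_F\le 2M$ uniformly (again from flocking) detaches the forcing from the \emph{position} initial data, leaving $\|E^{(2)}(n)\|_F\le 2MC_\phi\|\Delta^{\overline{v}}(0)\|_F e^{-C\kappa\psi(M)nh}$. Adding the two recursions gives
\[
(P+Q)(n+1)\le (P+Q)(n)+h\big(1-\tfrac34\kappa\big)Q(n)+h\kappa\,\|E^{(2)}(n)\|_F;
\]
for $\kappa$ sufficiently large (as assumed in $(\mathcal{F}2)$) the middle term is nonpositive, and iterating leaves $(P+Q)(n)\le (P+Q)(0)+C(n,h)$ with $C(n,h)=2M\kappa C_\phi\|\Delta^{\overline{v}}(0)\|_F\sum_{k=0}^{n-1}h\,e^{-C\kappa\psi(M)kh}$. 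This geometric--exponential sum is bounded uniformly in $n$, and since $\sum_{k\ge0}h\,e^{-C\kappa\psi(M)kh}=h/(1-e^{-C\kappa\psi(M)h})\to 1/(C\kappa\psi(M))$ as $h\to0$, the constant converges to $C_0=2MC_\phi\|\Delta^{\overline{v}}(0)\|_F/(C\psi(M))$, which depends only on the velocity initial data and, notably, not on the initial position shape discrepancy. (If $\kappa$ is not large enough to discard the transfer term directly, the same conclusion follows from a discrete Gr\"onwall estimate using that $h\sum_n Q(n)$ is finite.)
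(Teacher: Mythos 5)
Your proposal is correct in substance but takes a genuinely different — and in fact sharper — route than the paper. The paper does not subtract the two reduced update identities as you do; instead, in \cref{L4.2} it rewrites the difference via a convexity manipulation with the weak damping coefficient $\lambda_{ij}=\overline{\phi}_{ij}+\overline{\phi}_{ji}$ (which can be as small as $2c_1/(Nc_2)$), bounds the resulting error terms $\mathcal{I}_1,\mathcal{I}_2$ in two lemmas, and assembles \cref{P4.5}. Because the coefficient $\cC_1$ there is generically \emph{not} a contraction (the term involving $\alpha=\max_{i,j}(1-\overline{\phi}_{ij}-\overline{\phi}_{ii})^2\le 1$ is of order $h$ and dominates the $-2h\kappa\lambda$ damping), the paper must introduce an artificial $\epsilon$-split in \cref{lem:D-0-0-b} and absorb the whole quadratic form into a decaying source $b_1e^{-b_2n}$ using the flocking decay of \emph{both} $\|\Delta^v(n)\|_F$ and $\|\Delta^{\overline{v}}(n)\|_F$; since $b_1\to\sqrt{\bar c_0\epsilon}\neq 0$ as $h\to 0$, their constant $C_0=\sqrt{\bar c_0/\epsilon}>0$ is irreducible. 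Your decomposition avoids this entirely: subtracting the exact identity $\Delta^v_{ij}(n+1)=(1-h\kappa)\Delta^v_{ij}(n)+h\kappa\sum_l(\phi_{il}-\phi_{jl})\Delta^v_{li}(n)$ from the proof of \cref{prop: delta_xv_recurrence} for the two solutions puts all difference-of-discrepancy terms into $E^{(1)}$, where \cref{lem:a_lip} and $\|\Delta^x(n)\|_F\le M=\tfrac{1}{4N\|\phi\|_{\text{Lip}}}$ from \cref{thm: flocking}(i) give the genuine contraction factor $(1-\tfrac34 h\kappa)$, while $E^{(2)}$ is a pure source: your two-configuration Lipschitz estimate is exactly the paper's inequality \eqref{D-5-0-0} inside \cref{lem: I1_bound}, and the factorization $\|E^{(2)}(n)\|_F\lesssim \|\Delta^x(n)-\Delta^{\overline{x}}(n)\|_F\,\|\Delta^{\overline{v}}(n)\|_F$ follows by the same Cauchy--Schwarz/Jensen bookkeeping as \eqref{D-8}. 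Your conclusion is then strictly stronger than the theorem: for fixed $n$ your $C(n,h)\to 0$ as $h\to 0$ (note your stated $C_0$ is the limit of the uniform-in-$n$ bound $\sup_n C(n,h)$, not of $C(n,h)$ at fixed $n$), which of course still implies the statement.

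One slip to fix: your claim that the middle term $h(1-\tfrac34\kappa)Q(n)$ is nonpositive ``as assumed in $(\mathcal{F}2)$'' is unjustified — $(\mathcal{F}2)$ only requires $\kappa$ large relative to the initial shape discrepancies and does not imply $\kappa\ge 4/3$. Your parenthetical fallback does repair this: iterating $Q(n+1)\le(1-\tfrac34h\kappa)Q(n)+h\kappa\|E^{(2)}(n)\|_F$ gives $h\sum_{k\ge 0}Q(k)\le \tfrac{4}{3\kappa}Q(0)+O(1)$ uniformly in $h$, so the position recursion closes with an extra additive term $\tfrac{4}{3\kappa}\|\Delta^v(0)-\Delta^{\overline{v}}(0)\|_F$ absorbed into $C(n,h)$ — admissible, since the theorem (like the paper's own $\bar c_0$) allows $C_0$ to depend on the initial velocity data. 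With that fallback made the main argument rather than an afterthought, the proof is complete.
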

\begin{proof}  Since the proof is rather lengthy, we give here an overview of the main ingredients and provide rigorous proofs of all necessary auxiliary results in the next subsections. We split the proof in three steps:\\

\noindent $\bullet$~Step A: we claim that for any $0<\epsilon<1$ and sufficiently small $h>0$ there exists positive constants $b_1(\epsilon, h),\,b_2(h)$ such that
\begin{align}
\begin{aligned} \label{D-0-0}
&\left\|\Delta^x(n+1)-\Delta^{\overline{x}}(n+1)\right\|_F\leq\left\|\Delta^x(n)-\Delta^{\overline{x}}(n)\right\|_F+h\left\|\Delta^v(n)-\Delta^{\overline{v}}(n)\right\|_F, \\
& \|\Delta^v(n+1) - \Delta^{\overline{v}}(n+1)\|_F \leq (1-\epsilon)\|\Delta^v(n) - \Delta^{\overline{v}}(n)\|_F + b_1(\epsilon,h)e^{-b_2(h)n},
\end{aligned}
\end{align}
where
\begin{align*}
b_1(\epsilon,h)&:=\left(\bar c_0 \epsilon+\bar c_1 h + \bar c_2 h^2\right)^{1/2}\,,\qquad b_2(h):=C\kappa \psi(M)h\,.
\end{align*}
\vspace{0.2cm}

\noindent $\diamond$~(Derivation of $\eqref{D-0-0}_1$):~It follows from \eqref{eq: discrete_MT} that 
 \[
\Delta_{ij}^x(n+1)-\Delta_{ij}^{\overline{x}}(n+1)=\Delta_{ij}^x(n)-\Delta_{ij}^{\overline{x}}(n)+h\left(\Delta_{ij}^v(n)-\Delta_{ij}^{\overline{v}}(n)\right).
\]
This yields
\[
\|\Delta_{ij}^x(n+1)-\Delta_{ij}^{\overline{x}}(n+1) \| \leq \|\Delta_{ij}^x(n)-\Delta_{ij}^{\overline{x}}(n) \| + h \|\Delta_{ij}^v(n)-\Delta_{ij}^{\overline{v}}(n) \|.
\]
Now, we sum up the above relation over all $i, j \in [N]$ to find the desired estimate $\eqref{D-0-0}_1$.

\vspace{0.2cm}

\noindent $\diamond$~(Derivation of $\eqref{D-0-0}_2$):~The proof of the second estimate $\eqref{D-0-0}_2$ requires several auxiliary results; we postpone it to \cref{lem:D-0-0-b}. \\

\vspace{0.2cm}

\noindent $\bullet$~Step B:~We estimate $\|\Delta^v(n) - \Delta^{\overline{v}}(n)\|_F $ and $\|\Delta^x(n) - \Delta^{\overline{x}}(n)\|_F$ in terms of quantities depending only on the initial data.\\

\noindent $\diamond$~(Bound on $\|\Delta^v(n) - \Delta^{\overline{v}}(n)\|_F$):~We claim that for any $0<\epsilon<1$ and $h$ small enough, we have
 \begin{equation} \label{D-New-0}
 \|\Delta^v(n) - \Delta^{\overline{v}}(n)\|_F\leq (1-\epsilon)^n\|\Delta^v(0) - \Delta^{\overline{v}}(0)\|_F + \frac{b_1e^{b_2}}{1-(1-\epsilon)e^{b_2}}e^{-b_2n}. 
 \end{equation}
 See \cref{cor: v_fluctuation_bound} for details.\\
 
 \noindent $\diamond$~(Bound on $\|\Delta^x(n) - \Delta^{\overline{x}}(n)\|_F$):~By  \eqref{D-0-0} and \eqref{D-New-0}, we have
\begin{align}
  & \|\Delta^x(n) - \Delta^{\overline{x}}(n)\|_F \notag\\
  & \hspace{0.5cm} \leq \|\Delta^x(0) - \Delta^{\overline{x}}(0)\|_F + h\sum_{k=0}^{n-1}\|\Delta^v(k) - \Delta^{\overline{v}}(k)\|_F\notag\\
  & \hspace{0.5cm} \leq \|\Delta^x(0) - \Delta^{\overline{x}}(0)\|_F + h\sum_{k=0}^{n-1}\left((1-\epsilon)^k\|\Delta^v(0) - \Delta^{\overline{v}}(0)\|_F + \frac{b_1e^{b_2}}{1-(1-\epsilon)e^{b_2}}e^{-b_2k}\right)\notag\\
  & \hspace{0.5cm} = \|\Delta^x(0) - \Delta^{\overline{x}}(0)\|_F + h\frac{1-(1-\epsilon)^n}{\epsilon}\|\Delta^v(0) - \Delta^{\overline{v}}(0)\|_F + h\frac{b_1e^{b_2}}{1-(1-\epsilon)e^{b_2}}\frac{1-e^{-b_2n}}{1-e^{-b_2}}\,.
    \label{D-0-1}
\end{align}

% \vspace{0.5cm}

\noindent $\bullet$~Step C:~Choosing $0<h<\epsilon<1$, we combine \eqref{D-New-0} and \eqref{D-0-1} to find 
\begin{align*}
\begin{aligned}
& \|\Delta^x(n) - \Delta^{\overline{x}}(n)\|_F +   \|\Delta^v(n) - \Delta^{\overline{v}}(n)\|_F \\
&  \leq  \max \Big \{  1\,;\, \underbrace{\frac{h}{\epsilon}\left[1-(1-\epsilon)^{n}\right] +  (1-\epsilon)^{n}}_{<\,1}\Big \} \Big(   \|\Delta^x(0) - \Delta^{\overline{x}}(0)\|_F +   \|\Delta^v(0) - \Delta^{\overline{v}}(0)\|_F          \Big) \\
&\qquad+h\frac{b_1e^{b_2}}{1-(1-\epsilon)e^{b_2}}\frac{1-e^{-b_2n}}{1-e^{-b_2}} + \frac{b_1e^{b_2}}{1-(1-\epsilon)e^{b_2}}e^{-b_2n}\\
&\le \|\Delta^x(0) - \Delta^{\overline{x}}(0)\|_F +   \|\Delta^v(0) - \Delta^{\overline{v}}(0)\|_F  + C(n,h)
\end{aligned}
\end{align*}
with 
\begin{align*}
    C(n,h):=h\frac{b_1(\epsilon,h)e^{b_2(h)}}{1-(1-\epsilon)e^{b_2(h)}}\frac{1-e^{-b_2(h)n}}{1-e^{-b_2(h)}} + \frac{b_1(\epsilon,h)e^{b_2(h)}}{1-(1-\epsilon)e^{b_2(h)}}e^{-b_2(h)n}\,.
\end{align*}
It follows that for fixed $n\ge 0$,
\begin{align*}
   \lim_{h\to 0} \frac{1-e^{-b_2(h)n}}{1-e^{-b_2(h)}} = \lim_{h\to 0} \frac{C\kappa \psi(M)hn + O(h^2)}{C\kappa \psi(M)h + O(h^2)} = n\,.
\end{align*}
Hence for any $n\ge 0$,
\begin{align*}
    \lim_{h\to 0} C(n,h) = \sqrt{\frac{\bar c_0}{ \epsilon}} = \frac{2}{\sqrt{\epsilon}} \sqrt{\|\Delta^v(0)\|_F^2 + \|\Delta^{\overline{v}}(0)\|_F^2}\,.
\end{align*}
\end{proof}
%%%%%%%%%%%%%%%%%%%%%%%%%%
\subsection{Propagation of velocity shape discrepancy} \label{sec:4.1}

Our main goal in this subsection is to control the shape discrepancy in velocities between pairs $(X, V)$ and $(\overline{X}, \overline{V})$ of solutions by bounding how their difference propagates via the discrete MT dynamics. For this, we define $\lambda = \lambda(n)$ as
\begin{equation} \label{D-lambda}
	\lambda:= %\min_{1\leq i,j\leq N}\lambda_{ij} = 
 \min_{1\leq i,j\leq N}(\overline{\phi}_{ij}+\overline{\phi}_{ji})>0\,,\qquad 
 \overline{\phi}_{ij} = \frac{a\left(\|\overline{x_i}-\overline{x_j}\|\right)}{\sum_{k=1}^N a\left(\|\overline{x_i}-\overline{x_k}\|\right)}\,.
\end{equation}
and $\alpha = \alpha(n)$ as
\begin{equation} \label{D-alpha}
	\alpha:= \max_{1\leq i,j\leq N}\alpha_{ij} = \max_{1\leq i,j\leq N}\left(1-\overline{\phi}_{ij} - \overline{\phi}_{ii}\right)^2.
\end{equation}
\begin{proposition} \label{P4.5}
    Suppose the framework $(\mathcal{F}1) - (\mathcal{F}2)$ holds, and let $(X,V)$ and $(\overline{X}, \overline{V})$ be global solutions to \eqref{eq: discrete_MT}. Then, we have
    \begin{align*} % \label{D-18}
		&\|\Delta^v(n+1)-\Delta^{\overline{v}}(n+1)\|_F^2\notag\\
  &\qquad \le \cC_1(n,h)\|\Delta^v(n) - \Delta^{\overline{v}}(n)\|_F^2 + \cC_2(n,h)\|\Delta^v(n) - \Delta^{\overline{v}}(n)\|_F\|\Delta^v(n)\|_F + \cC_3(n,h)\|\Delta^v(n)\|_F^2\,,
  \end{align*}
  where
  \begin{align*}
  \cC_1(n,h)&:=(1-h\kappa\lambda(n))^2+ 4h^2\kappa^2\left(\alpha(n)+\frac{L_a^2M^2}{Nc_1^2}\left(1+\frac{c_2}{c_1}\right)^2\right)\\
   &\qquad+ 2\sqrt{2}h\kappa (1-h\kappa \lambda(n)) \left(\alpha(n)+\frac{L_a^2M^2}{c_1^2}\left(1+\frac{c_2}{c_1}\right)^2\right)^{1/2}\,,\\
   \cC_2(n,h)&:=\frac{8h\kappa L_a M}{c_1}(1-h\kappa\lambda(n))\left(1+\frac{c_2}{c_1}\right)\,,\\
    \cC_3(n,h)&:=\frac{32h^2\kappa^2L_a^2M^2}{c_1^2}\left(1+\frac{c_2}{c_1}\right)^2\,.
\end{align*}
\end{proposition}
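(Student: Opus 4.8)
The plan is to mimic the one-step computation behind \cref{prop: delta_xv_recurrence}, but now applied to the \emph{difference} of the two velocity shape discrepancies. Writing the velocity update in \eqref{eq: discrete_MT} for both $(X,V)$ and $(\overline{X},\overline{V})$ and using the unit-sum property $\sum_l \phi_{jl}(n)=\sum_l\overline{\phi}_{jl}(n)=1$ exactly as in the derivation of $\eqref{New-2}_2$, I would first establish the identity
\[
\ba
\Delta^v_{ij}(n+1)-\Delta^{\overline{v}}_{ij}(n+1)
&=(1-h\kappa)D_{ij}+h\kappa\sum_{l=1}^N(\overline{\phi}_{il}-\overline{\phi}_{jl})D_{li}\\
&\quad+h\kappa\sum_{l=1}^N\big[(\phi_{il}-\overline{\phi}_{il})-(\phi_{jl}-\overline{\phi}_{jl})\big]\Delta^v_{li}(n),
\ea
\]
where $D_{ij}:=\Delta^v_{ij}(n)-\Delta^{\overline{v}}_{ij}(n)$ and all weights are evaluated at step $n$. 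The first two terms form a \emph{homogeneous} part $H_{ij}$ depending only on the discrepancy difference $D$ and the reference weights $\overline{\phi}$, while the last term is a \emph{forcing} part $G_{ij}$ driven by the weight mismatch $\phi-\overline{\phi}$ and the discrepancy $\Delta^v$ of the first solution. This cleanly separates the contractive mechanism from the perturbation caused by the two configurations differing.

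Next I would extract the contraction from $H_{ij}$. Isolating the $l=i$ and $l=j$ terms (using $D_{ii}=0$ and $D_{ji}=-D_{ij}$) rewrites the $D_{ij}$-coefficient as $1-h\kappa(1-\overline{\phi}_{jj}+\overline{\phi}_{ij})$, and the elementary identities $1-\overline{\phi}_{jj}-\overline{\phi}_{ji}=\sum_{l\ne i,j}\overline{\phi}_{jl}=\sqrt{\alpha_{ji}}$ and $1-\overline{\phi}_{ii}-\overline{\phi}_{ij}=\sum_{l\ne i,j}\overline{\phi}_{il}=\sqrt{\alpha_{ij}}$ (with $\alpha_{ij}$ as in \eqref{D-alpha}) let me split this coefficient as $\big[1-h\kappa(\overline{\phi}_{ij}+\overline{\phi}_{ji})\big]-h\kappa\sqrt{\alpha_{ji}}$. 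By the definition of $\lambda$ in \eqref{D-lambda} the bracket is bounded above by $1-h\kappa\lambda$, which (for $h$ small, using condition (3) of \cref{as: a}) is a genuine contraction factor; the residual $-h\kappa\sqrt{\alpha_{ji}}D_{ij}$ together with the off-diagonal sum $h\kappa\sum_{l\ne i,j}(\overline{\phi}_{il}-\overline{\phi}_{jl})D_{li}$ is a remainder whose Frobenius norm I would bound by a multiple of $h\kappa\sqrt{\alpha(n)}\,\|\Delta^v(n)-\Delta^{\overline{v}}(n)\|_F$ via Cauchy--Schwarz and $\sum_{l\ne i,j}\overline{\phi}_{il}=\sqrt{\alpha_{ij}}\le\sqrt{\alpha(n)}$, precisely producing the $\alpha$-contributions in $\cC_1$.

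For the forcing $G_{ij}$ I would write $(\phi_{il}-\overline{\phi}_{il})-(\phi_{jl}-\overline{\phi}_{jl})=(\phi_{il}-\phi_{jl})-(\overline{\phi}_{il}-\overline{\phi}_{jl})$ and apply \cref{lem:a_lip} to each bracket, so that $|\cdots|\le\|\phi\|_{\text{Lip}}(\|x_i-x_j\|+\|\overline{x}_i-\overline{x}_j\|)$. Since the framework $(\mathcal{F}1)$--$(\mathcal{F}2)$ activates \cref{thm: flocking} for both solutions, we have $\|\Delta^x(n)\|_F,\|\Delta^{\overline{x}}(n)\|_F\le M$ for all $n$, hence each bracket is $\le 2\|\phi\|_{\text{Lip}}M=\tfrac{2L_aM}{Nc_1}(1+\tfrac{c_2}{c_1})$; summing in the spirit of \cref{lem: triple_delta} (Jensen together with $\sum_{ij}(\sum_l\|\Delta^v_{li}\|)^2\le N^2\|\Delta^v(n)\|_F^2$) then controls $\|G\|_F$ by a multiple of $h\kappa\tfrac{L_aM}{c_1}(1+\tfrac{c_2}{c_1})\|\Delta^v(n)\|_F$, which is the source of the Lipschitz terms appearing in $\cC_1$, $\cC_2$ and $\cC_3$. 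Finally, I would expand $\|\Delta^v(n+1)-\Delta^{\overline{v}}(n+1)\|_F^2=\sum_{ij}\|H_{ij}+G_{ij}\|^2$, retain the square of the contraction factor, and dispatch every cross term by Cauchy--Schwarz followed by Young's inequality, collecting the results into the quadratic form $\cC_1\|D\|_F^2+\cC_2\|D\|_F\|\Delta^v(n)\|_F+\cC_3\|\Delta^v(n)\|_F^2$. The delicate point throughout is the lack of symmetry of $\overline{\phi}$: the contraction rate must be read off from the symmetric combination $\overline{\phi}_{ij}+\overline{\phi}_{ji}$, while the genuinely non-contractive couplings are repackaged into the $\alpha$-dependent remainder without spoiling the $O(h)$ gain. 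Getting this grouping right, rather than any single estimate, is the main obstacle, and it is what dictates the precise form of the constants $\cC_1,\cC_2,\cC_3$.
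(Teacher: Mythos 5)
Your one-step identity and grouping are, after a short algebraic check, exactly the paper's \cref{L4.2}: substituting $D_{li}=D_{lj}-D_{ij}$ in your off-diagonal sum shows that your homogeneous remainder coincides with $\mathcal{I}_2^{ij}$, and your forcing term equals $\mathcal{I}_1^{ij}$ because $\sum_{l}(\phi_{jl}-\overline{\phi}_{jl})=0$, so the contraction rate $\lambda_{ij}=\overline{\phi}_{ij}+\overline{\phi}_{ji}$ is extracted just as in the paper. Your treatment of the forcing is sound and in fact slightly more economical than the paper's: regrouping as $(\phi_{il}-\phi_{jl})-(\overline{\phi}_{il}-\overline{\phi}_{jl})$ and applying \cref{lem:a_lip} twice together with the flocking bound $\max\{\|\Delta^x(n)\|_F,\|\Delta^{\overline{x}}(n)\|_F\}\le M$ bypasses the paper's separate cross-solution weight estimate \eqref{D-5-0-0} in \cref{lem: I1_bound}, and comfortably reproduces bounds of the size needed for $\cC_2$ and $\cC_3$.

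The genuine gap is in your estimate of the homogeneous remainder. You claim its Frobenius norm is controlled by a multiple of $h\kappa\sqrt{\alpha(n)}\,\|\Delta^v(n)-\Delta^{\overline{v}}(n)\|_F$ ``via Cauchy--Schwarz and $\sum_{l\neq i,j}\overline{\phi}_{il}=\sqrt{\alpha_{ij}}$,'' but this does not go through with $N$-independent constants: applying Cauchy--Schwarz directly to $\sum_{l\neq i,j}\overline{\phi}_{il}D_{li}$ gives $\|\cdot\|^2\le \alpha_{ij}\sum_l\|D_{li}\|^2$, and summing over $i,j$ then produces $\alpha N\|D\|_F^2$ --- an extra factor of $N$ --- because the $j$-sum is idle; the mixed terms $\overline{\phi}_{jl}D_{li}$ are worse still, as the index of the weight does not match the index of the discrepancy. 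The paper's \cref{lem: I2_bound} avoids this by one further telescoping \emph{inside} the remainder, $D_{li}=D_{ji}+D_{lj}$ (see \eqref{D-11}): the coefficient of $D_{ji}$ then becomes exactly $\sum_{l\neq i,j}\overline{\phi}_{il}=\sqrt{\alpha_{ij}}$, pulled out of the $l$-sum so that squaring incurs no loss, while the leftover $(\overline{\phi}_{il}-\overline{\phi}_{jl})D_{lj}$ must be estimated by the Lipschitz bound $\|\phi\|_{\text{Lip}}\|\Delta^{\overline{x}}_{ij}\|$ together with $\|\Delta^{\overline{x}}(n)\|_F\le M$ from \cref{thm: flocking}. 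This is precisely where the term $\frac{L_a^2M^2}{Nc_1^2}\left(1+\frac{c_2}{c_1}\right)^2$ sitting next to $\alpha$ inside $\cC_1$ originates, so your attribution of the Lipschitz contributions in $\cC_1$ to the forcing part is also incorrect: the forcing only generates terms proportional to $\|\Delta^v(n)\|_F$ and hence feeds $\cC_2$ and $\cC_3$ alone. The repair is local --- apply to the homogeneous remainder the same substitution $D_{li}=D_{ji}+D_{lj}$ that you already use for the forcing --- but without it your argument cannot reach the stated constants.
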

We will prepare this result with several a priori estimates. 
\begin{lemma} \label{L4.2}
Suppose the framework $(\mathcal{F}1) -  (\mathcal{F}2)$ holds, and let $(X,V)$ and $(\overline{X}, \overline{V})$ be global solutions to \eqref{eq: discrete_MT}. Then, we have
\begin{align*}
\begin{aligned}
&\Delta_{ij}^v(n+1) -\Delta_{ij}^{\overline{v}}(n+1)
=(1-h\kappa\lambda_{ij}(n))(\Delta_{ij}^v(n)-\Delta_{ij}^{\overline{v}}(n)) + \cI^{ij}_1 + \cI^{ij}_2\,,
\end{aligned}
\end{align*}
where $\lambda_{ij} = \overline{\phi}_{ij}+\overline{\phi}_{ji}>0$
and
\begin{align*}
   \cI^{ij}_1&:= h\kappa\sum_{l=1}^{N}( \phi_{il}-\overline{\phi}_{il})\Delta_{li}^v(n)-h\kappa\sum_{l=1}^{N}(\phi_{jl}-\overline{\phi}_{jl})\Delta_{lj}^{v}(n)\,,\\
   \cI^{ij}_2&:=h\kappa\sum_{l\neq i,j}\Big[\overline{\phi}_{il}\left(\Delta_{li}^v(n)-\Delta_{li}^{\overline{v}}(n)\right)-\overline{\phi}_{jl}\left(\Delta_{lj}^v(n)-\Delta_{lj}^{\overline{v}}(n)\right)\Big]\,.
\end{align*}
\end{lemma}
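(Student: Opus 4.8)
The plan is to prove this as a direct algebraic identity: write out the one-step update of each velocity difference from \eqref{eq: discrete_MT}, subtract the barred and unbarred systems, and rearrange the resulting sums through a telescoping split. First I would record the per-pair evolution in its $i\leftrightarrow j$ symmetric (unsimplified) form. Using $v_l(n)-v_i(n)=\Delta^v_{li}(n)$ and $v_l(n)-v_j(n)=\Delta^v_{lj}(n)$, the momentum update gives
\[
\Delta^v_{ij}(n+1)=\Delta^v_{ij}(n)+h\kappa\sum_{l=1}^N\phi_{il}(n)\Delta^v_{li}(n)-h\kappa\sum_{l=1}^N\phi_{jl}(n)\Delta^v_{lj}(n),
\]
together with the analogous identity for the barred configuration (with $\overline{\phi}$ and $\Delta^{\overline{v}}$ in place of $\phi$ and $\Delta^v$). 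Subtracting, the quantity $\Delta^v_{ij}(n+1)-\Delta^{\overline{v}}_{ij}(n+1)$ equals $\Delta^v_{ij}(n)-\Delta^{\overline{v}}_{ij}(n)$ plus $h\kappa$ times the difference of the two pairs of sums.

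Next I would apply the elementary splitting
\[
\phi_{il}\Delta^v_{li}-\overline{\phi}_{il}\Delta^{\overline{v}}_{li}=(\phi_{il}-\overline{\phi}_{il})\Delta^v_{li}+\overline{\phi}_{il}\left(\Delta^v_{li}-\Delta^{\overline{v}}_{li}\right)
\]
to each summand, and similarly with the indices $j,l$. Collecting the first pieces over $l$ for the $i$-sum and the $j$-sum reproduces exactly $\cI^{ij}_1$, so the remaining work is to show that the terms carrying the factor $\overline{\phi}$ account for both the diagonal coefficient $1-h\kappa\lambda_{ij}$ and the truncated sum $\cI^{ij}_2$.

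The key bookkeeping step is to isolate the indices $l=i$ and $l=j$ in the two $\overline{\phi}$-weighted sums. Using $\Delta^v_{ii}=\Delta^{\overline{v}}_{ii}=0$, the $l=i$ term of the $i$-sum and the $l=j$ term of the $j$-sum both vanish; the cross terms (namely $l=j$ in the $i$-sum and $l=i$ in the $j$-sum), together with the antisymmetry $\Delta^v_{ji}=-\Delta^v_{ij}$ and its barred analogue, contribute $-h\kappa(\overline{\phi}_{ij}+\overline{\phi}_{ji})(\Delta^v_{ij}-\Delta^{\overline{v}}_{ij})=-h\kappa\lambda_{ij}(\Delta^v_{ij}-\Delta^{\overline{v}}_{ij})$, which combines with the leading $\Delta^v_{ij}-\Delta^{\overline{v}}_{ij}$ to form the claimed coefficient $(1-h\kappa\lambda_{ij})$. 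The surviving $l\neq i,j$ terms are precisely $\cI^{ij}_2$. Finally, positivity $\lambda_{ij}>0$ is immediate from the lower bound $a(r)\ge c_1>0$ in \cref{as: a}, which forces every $\overline{\phi}_{ij}>0$.

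There is no genuine analytic difficulty here; the statement is a rearrangement identity valid for all $h,\kappa$. The only point demanding care is the index bookkeeping in the previous step, in particular keeping straight that $\phi_{jl}$ multiplies $\Delta^v_{lj}$ (not $\Delta^v_{li}$) and correctly tracking the signs produced by $\Delta^v_{ji}=-\Delta^v_{ij}$. For this reason I would deliberately \emph{not} pre-simplify the update via $\sum_l\phi_{jl}=1$ as was done in \cref{prop: delta_xv_recurrence}, since that simplification collapses the $i\leftrightarrow j$ symmetry needed to read off the diagonal factor $\lambda_{ij}=\overline{\phi}_{ij}+\overline{\phi}_{ji}$ cleanly.
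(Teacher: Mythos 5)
Your proof is correct, and while it shares the paper's skeleton, the key regrouping step is done by a genuinely different and more direct mechanism. Like the paper, you start from the unsimplified symmetric update (the paper's \eqref{D-1-1}) and apply the identical splitting $\phi_{il}\Delta^v_{li}-\overline{\phi}_{il}\Delta^{\overline{v}}_{li}=(\phi_{il}-\overline{\phi}_{il})\Delta^v_{li}+\overline{\phi}_{il}(\Delta^v_{li}-\Delta^{\overline{v}}_{li})$, which is exactly \eqref{D-1}. The divergence is in extracting the diagonal coefficient: the paper substitutes $\overline{\phi}_{ij}=1-\sum_{l\neq j}\overline{\phi}_{il}$ (row-stochasticity), regroups through the intermediate quantity $\mathcal{I}_0$ in \eqref{D-4}--\eqref{D-5}, arrives at the form $\lambda_{ij}=2-\overline{\phi}_{ii}-\overline{\phi}_{jj}-\sum_{l\neq i,j}(\overline{\phi}_{il}+\overline{\phi}_{jl})$ in \eqref{D-6}, and then invokes unit sum a second time to simplify this to $\overline{\phi}_{ij}+\overline{\phi}_{ji}$. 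You bypass all of this by peeling off the $l\in\{i,j\}$ terms of the two $\overline{\phi}$-weighted sums directly: $\Delta^v_{ii}=\Delta^{\overline{v}}_{jj}=0$ kills two terms, and antisymmetry gives
\begin{align*}
\overline{\phi}_{ij}\left(\Delta^v_{ji}-\Delta^{\overline{v}}_{ji}\right)-\overline{\phi}_{ji}\left(\Delta^v_{ij}-\Delta^{\overline{v}}_{ij}\right)=-\left(\overline{\phi}_{ij}+\overline{\phi}_{ji}\right)\left(\Delta^v_{ij}-\Delta^{\overline{v}}_{ij}\right)\,,
\end{align*}
which reads off $\lambda_{ij}=\overline{\phi}_{ij}+\overline{\phi}_{ji}$ immediately, with the surviving $l\neq i,j$ terms being $\cI^{ij}_2$ verbatim. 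What your route buys: it replaces the longest stretch of the paper's computation with a two-line bookkeeping step, and it makes transparent that the decomposition is a purely algebraic identity that never uses $\sum_l\overline{\phi}_{il}=1$ --- normalization enters only through the positivity $\overline{\phi}_{ij},\overline{\phi}_{ji}>0$ (which, as you note, follows from $a\ge c_1>0$ in \cref{as: a}), and nothing downstream in \cref{sec:4} uses the paper's intermediate form of $\lambda_{ij}$, so your shortcut loses nothing. Your closing observation is also accurate: the paper's proof of this lemma indeed works from the symmetric form rather than the pre-simplified one used in \cref{prop: delta_xv_recurrence}.
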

\begin{proof}
It follows from \eqref{eq: discrete_MT} that 
    \begin{align} 
	\begin{aligned} \label{D-1-1}
	&\Delta_{ij}^v(n+1) -\Delta_{ij}^{\overline{v}}(n+1)\\ 
	&\hspace{0.5cm}=\big(v_i(n+1)-v_j(n+1)\big)-\big(\overline{v}_i(n+1)-\overline{v}_j(n+1)\big)\\
	&\hspace{0.5cm}=\left[v_i(n)-v_j(n)+h\kappa\sum_{l=1}^{N}\big(\phi_{il}(v_l(n)-v_i(n))- \phi_{jl}(v_l(n)-v_j(n))\big)\right]\\
	&\hspace{0.5cm}-\left[\overline{v}_i(n)-\overline{v}_j(n)+h\kappa\sum_{l=1}^{N}\big(\overline{\phi}_{il}(\overline{v}_l(n)-\overline{v}_i(n))-\overline{\phi}_{jl}(\overline{v}_l(n)-\overline{v}_j(n))\big)\right]\\
	&\hspace{0.5cm}=\Delta_{ij}^v(n)-\Delta_{ij}^{\overline{v}}(n)+h\kappa\sum_{l=1}^{N}\Big[\phi_{il}\Delta_{li}^v(n)- \phi_{jl}\Delta_{lj}^v(n)-\overline{\phi}_{il}\Delta_{li}^{\overline{v}}(n)+\overline{\phi}_{jl}\Delta_{lj}^{\overline{v}}(n)\Big]\,.
\end{aligned}
\end{align}
Now, we replace $\phi_{il}$ with $\phi_{il}-\overline{\phi}_{il}+\overline{\phi}_{il}$ to obtain
\begin{equation} \label{D-1}
	\sum_{l=1}^{N}\Big[ \phi_{il}\Delta_{li}^v(n)-\overline{\phi}_{il}\Delta_{li}^{\overline{v}}(n)\Big]=\sum_{l=1}^{N}\left(\phi_{il}-\overline{\phi}_{il}\right)\Delta_{li}^{v}(n)+\sum_{l=1}^{N}\overline{\phi}_{il}\left(\Delta_{li}^v(n)-\Delta_{li}^{\overline{v}}(n)\right).
\end{equation}
Note that the second summation on the right-hand side of \eqref{D-1} can be rewritten as
\begin{align*}
\begin{aligned}
	&\sum_{l=1}^{N}\overline{\phi}_{il}\left(\Delta_{li}^v(n)-\Delta_{li}^{\overline{v}}(n)\right)\\
	&\hspace{1cm}=\overline{\phi}_{ij}\left(\Delta_{ji}^v(n)-\Delta_{ji}^{\overline{v}}(n)\right)+\sum_{l\neq j}\overline{\phi}_{il}\left(\Delta_{li}^v(n)-\Delta_{li}^{\overline{v}}(n)\right)\\
	&\hspace{1cm}=\left(1-\sum_{l\neq j}\overline{\phi}_{il}\right)\left(\Delta_{ji}^v(n)-\Delta_{ji}^{\overline{v}}(n)\right)+\sum_{l\neq j}\overline{\phi}_{il}\left(\Delta_{li}^v(n)-\Delta_{li}^{\overline{v}}(n)\right)\\
	&\hspace{1cm}=-\left(\Delta_{ij}^v(n)-\Delta_{ij}^{\overline{v}}(n)\right)+\sum_{l\neq j}\overline{\phi}_{il}\left(\Delta_{li}^v(n)-\Delta_{li}^{\overline{v}}(n)-\Delta_{ji}^v(n)+\Delta_{ji}^{\overline{v}}(n)\right).
\end{aligned}
\end{align*}
Thus, we obtain
\begin{align}\label{D-2}
\begin{aligned}
&\sum_{l=1}^{N}\Big[ \phi_{il}\Delta_{li}^v(n)-\overline{\phi}_{il}\Delta_{li}^{\overline{v}}(n)\Big] = -\left(\Delta_{ij}^v(n)-\Delta_{ij}^{\overline{v}}(n)\right)+\sum_{l=1}^{N}\left(\phi_{il}-\overline{\phi}_{il}\right)\Delta_{li}^{v}(n) \\
& \hspace{2cm} +\sum_{l\neq j}\overline{\phi}_{il}\left(\Delta_{li}^v(n)-\Delta_{li}^{\overline{v}}(n)-\Delta_{ji}^v(n)+\Delta_{ji}^{\overline{v}}(n)\right).
\end{aligned}
\end{align}
Similarly, we have
\begin{align}\label{D-3}
\begin{aligned}
& \sum_{l=1}^{N}\Big[\phi_{jl}\Delta_{lj}^v(n)- \overline{\phi}_{jl}\Delta_{lj}^{\overline{v}}(n)\Big] = -\left(\Delta_{ji}^v(n)-\Delta_{ji}^{\overline{v}}(n)\right)+\sum_{l=1}^{N}\left(\phi_{jl}-\overline{\phi}_{jl}\right)\Delta_{lj}^{v}(n)\\
&\hspace{2cm}+\sum_{l\neq i}\overline{\phi}_{jl}\left(\Delta_{lj}^v(n)-\Delta_{lj}^{\overline{v}}(n)-\Delta_{ij}^v(n)+\Delta_{ij}^{\overline{v}}(n)\right).
	\end{aligned}
\end{align}
Subtracting \eqref{D-3} from \eqref{D-2} we get 
\begin{align}
\begin{aligned} \label{D-4}
	&\sum_{l=1}^{N}\Big[\phi_{il}\Delta_{li}^v-\overline{\phi}_{il}\Delta_{li}^{\overline{v}}-\phi_{jl}\Delta_{lj}^v+\overline{\phi}_{jl}\Delta_{lj}^{\overline{v}}\Big]\\
	&\hspace{1.5cm}=-2\left(\Delta_{ij}^v-\Delta_{ij}^{\overline{v}}\right)+\sum_{l=1}^{N}(\phi_{il}-\overline{\phi}_{il})\Delta_{li}^v-\sum_{l=1}^{N}(\phi_{jl}-\overline{\phi}_{jl})\Delta_{lj}^v\\
	&\hspace{1.7cm}+ \underbrace{\sum_{l\neq j}\overline{\phi}_{il}\left(\Delta_{li}^v-\Delta_{li}^{\overline{v}}-\Delta_{ji}^v + \Delta_{ji}^{\overline{v}}\right)-\sum_{l\neq i}\overline{\phi}_{jl}\left(\Delta_{lj}^v-\Delta_{lj}^{\overline{v}}-\Delta_{ij}^v+\Delta_{ij}^{\overline{v}}\right)}_{=:{\mathcal I}_{0}}.
\end{aligned}
\end{align}
The term ${\mathcal I}_{0}$ can be treated as follows:
\begin{align}
\begin{aligned} \label{D-5}
{\mathcal I}_0 
	 &=\sum_{l\neq i,j}\Big[\overline{\phi}_{il}\left(\Delta_{li}^v-\Delta_{li}^{\overline{v}}\right)-\overline{\phi}_{il}\left(\Delta_{ji}^v-\Delta_{ji}^{\overline{v}}\right)-\overline{\phi}_{jl}\left(\Delta_{lj}^v-\Delta_{lj}^{\overline{v}}\right)+\overline{\phi}_{jl}\left(\Delta_{ij}^v-\Delta_{ij}^{\overline{v}}\right)\Big]\\
	&~~+\overline{\phi}_{ii}\left(-\Delta_{ji}^v+\Delta_{ji}^{\overline{v}}\right)-\overline{\phi}_{jj}\left(-\Delta_{ij}^v+\Delta_{ij}^{\overline{v}}\right)\\
	&=\sum_{l\neq i,j}\Big[\overline{\phi}_{il}\left(\Delta_{li}^v-\Delta_{li}^{\overline{v}}\right)-\overline{\phi}_{jl}\left(\Delta_{lj}^v-\Delta_{lj}^{\overline{v}}\right)\Big]+\sum_{l\neq i,j}\left(\overline{\phi}_{il}+\overline{\phi}_{jl}\right)\left(\Delta_{ij}^v-\Delta_{ij}^{\overline{v}}\right)\\
	&~~+\overline{\phi}_{ii}\left(\Delta_{ij}^v-\Delta_{ij}^{\overline{v}}\right)+\overline{\phi}_{jj}\left(\Delta_{ij}^v-\Delta_{ij}^{\overline{v}}\right).
\end{aligned}
\end{align}
Thus, we combine \eqref{D-4} and \eqref{D-5} to obtain
\begin{align}
\begin{aligned} \label{D-6}
& \sum_{l=1}^{N}\Big[\phi_{il}\Delta_{li}^v-\overline{\phi}_{il}\Delta_{li}^{\overline{v}}-\phi_{jl}\Delta_{lj}^v+\overline{\phi}_{jl}\Delta_{lj}^{\overline{v}}\Big]  \\
& \hspace{0.5cm} =-\underbrace{\left[2-\overline{\phi}_{ii}-\overline{\phi}_{jj}-\sum_{l\neq i,j}\left(\overline{\phi}_{il}+\overline{\phi}_{jl}\right)\right]}_{=:\lambda_{ij}>0}\left(\Delta_{ij}^v-\Delta_{ij}^{\overline{v}}\right) +\sum_{l=1}^{N}( \phi_{il}-\overline{\phi}_{il})\Delta_{li}^v \\
& \hspace{0.5cm} -\sum_{l=1}^{N}(\phi_{jl}-\overline{\phi}_{jl})\Delta_{lj}^v +\sum_{l\neq i,j}\Big[\overline{\phi}_{il}\left(\Delta_{li}^v-\Delta_{li}^{\overline{v}}\right)-\overline{\phi}_{jl}\left(\Delta_{lj}^v-\Delta_{lj}^{\overline{v}}\right)\Big].
\end{aligned}
\end{align}
Note that
\[
\lambda_{ij}(n)=2-\sum_{l\neq j}\overline{\phi}_{il}-\sum_{l\neq i}\overline{\phi}_{jl}=2-\left(1-\overline{\phi}_{ij}\right)-\left(1-\overline{\phi}_{ji}\right)=\overline{\phi}_{ij}+\overline{\phi}_{ji}>0\,.
\]
Finally, we combine \eqref{D-1-1}, \eqref{D-6} to obtain
\begin{align*} 
\begin{aligned}
& \Delta_{ij}^v(n+1) -\Delta_{ij}^{\overline{v}}(n+1) =(1-h\kappa\lambda_{ij}(n))(\Delta_{ij}^v(n)-\Delta_{ij}^{\overline{v}}(n)) \\
& \hspace{1cm} +\underbrace{h\kappa\sum_{l=1}^{N}(\phi_{il}-\overline{\phi}_{il})\Delta_{li}^v(n)-h\kappa\sum_{l=1}^{N}(\phi_{jl}-\overline{\phi}_{jl})\Delta_{lj}^{v}(n)}_{:=\mathcal{I}_1^{ij}(n)}\\
&\hspace{1cm}+\underbrace{h\kappa\sum_{l\neq i,j}\Big[\overline{\phi}_{il}\left(\Delta_{li}^v(n)-\Delta_{li}^{\overline{v}}(n)\right)-\overline{\phi}_{jl}\left(\Delta_{lj}^v(n)-\Delta_{lj}^{\overline{v}}(n)\right)\Big]}_{:=\mathcal{I}_2^{ij}(n)}\,.
	\end{aligned}
\end{align*}
\end{proof}
In what follows, we use \cref{thm: flocking} to estimate the above expressions in terms of the $\|\cdot\|_F$-norm. 
Taking the square on both sides of the equality in \cref{L4.2} and sum up  the resulting relation over all $i,j$ to obtain
\begin{align*}
\begin{aligned}
& \left\|\Delta^v(n+1)-\Delta^{\overline{v}}(n+1)\right\|_F^2 \\
& \hspace{1cm} =\sum_{i,j=1}^{N}\left(1-h\kappa\lambda_{ij}\right)^2\left \|\Delta_{ij}^v(n)-\Delta_{ij}^{\overline{v}}(n)\right \|^2+\sum_{i,j=1}^{N} \|\mathcal{I}_1^{ij}+\mathcal{I}_2^{ij} \|^2\\
&\hspace{1.2cm} + 2 \sum_{i,j=1}^{N} \left(1-h\kappa\lambda_{ij}\right)\left(\Delta_{ij}^v(n)-\Delta_{ij}^{\overline{v}}(n)\right)\cdot(\mathcal{I}_1^{ij} + \mathcal{I}_2^{ij})\\
& \hspace{1cm} \leq \sum_{i,j=1}^{N}\left(1-h\kappa\lambda_{ij}\right)^2 \left \|\Delta_{ij}^v(n)-\Delta_{ij}^{\overline{v}}(n)\right \|^2 + 2 \sum_{i,j=1}^{N}\left( \|\mathcal{I}_1^{ij} \|^2 + \|\mathcal{I}_2^{ij} \|^2\right)\\
&\hspace{1.2cm} + 2 \sum_{i,j=1}^{N} \left(1-h\kappa\lambda_{ij}\right)\left \|\Delta_{ij}^v(n)-\Delta_{ij}^{\overline{v}}(n)\right \|\cdot\left( \|\mathcal{I}_1^{ij} \| + \|\mathcal{I}_2^{ij} \|\right).
\end{aligned}
\end{align*}
We use \eqref{D-lambda} to obtain
\begin{align} \label{D-5-0}
\begin{aligned}
& \left\|\Delta^v(n+1)-\Delta^{\overline{v}}(n+1)\right\|_F^2  \\
& \hspace{1cm} \leq (1-h\kappa\lambda)^2\left\|\Delta^v(n)-\Delta^{\overline{v}}(n)\right\|_F^2 + 2\sum_{i,j=1}^{N} \|\mathcal{I}_1^{ij} \|^2 + 2\sum_{i,j=1}^{N} \|\mathcal{I}_2^{ij} \|^2\\
&\hspace{1.2cm} + 2 \left(1-h\kappa\lambda\right)\sum_{i,j=1}^{N} \left\|\Delta_{ij}^v(n)-\Delta_{ij}^{\overline{v}}(n)\right\|\,\left( \|\mathcal{I}_1^{ij} \| + \|\mathcal{I}_2^{ij} \|\right).
\end{aligned}
\end{align}
In the following two lemmas, we provide estimates for $\sum_{i,j=1}^{N} \|\mathcal{I}_1^{ij} \|^2$ and $\sum_{i,j=1}^N \|\mathcal{I}_{2}^{ij} \|^2$.
\begin{lemma}  \label{lem: I1_bound}  
For $i, j \in [N]$,  let ${\mathcal I}_1^{ij}$ be the quantity defined in \cref{L4.2}. Then, we have
\[
\sum_{i,j=1}^{N} \|\mathcal{I}_1^{ij} \|^2  \leq \frac{16h^2\kappa^2L_a^2M^2}{c_1^2}\left(1+\frac{c_2}{c_1}\right)^2\|\Delta^v(n)\|_F^2\,. \]
\end{lemma}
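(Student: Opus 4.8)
The plan is to collapse the double sum $\sum_{i,j}\|\mathcal{I}_1^{ij}\|^2$ into a single-index sum and then reduce everything to a uniform pointwise bound on the weight difference $|\phi_{il}-\overline\phi_{il}|$. First I would write $\mathcal{I}_1^{ij}=h\kappa(A_i-B_j)$ with
$A_i:=\sum_{l=1}^N(\phi_{il}-\overline\phi_{il})\Delta_{li}^v(n)$ and $B_j:=\sum_{l=1}^N(\phi_{jl}-\overline\phi_{jl})\Delta_{lj}^v(n)$, and use $\|A_i-B_j\|^2\le 2\|A_i\|^2+2\|B_j\|^2$. Since $A_i$ is independent of $j$ (and $B_j$ of $i$), summing over the free index contributes a factor $N$, and after relabelling $A$ and $B$ carry the same form, giving $\sum_{i,j}\|\mathcal{I}_1^{ij}\|^2\le 4h^2\kappa^2 N\sum_i\|A_i\|^2$.

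Next I would estimate each $\|A_i\|^2$ by Cauchy--Schwarz, $\|A_i\|^2\le\big(\sum_l|\phi_{il}-\overline\phi_{il}|^2\big)\big(\sum_l\|\Delta_{li}^v(n)\|^2\big)$. Summing over $i$ and recognizing $\sum_{i,l}\|\Delta_{li}^v(n)\|^2=\|\Delta^v(n)\|_F^2$, the whole problem reduces to a uniform bound on $\sum_l|\phi_{il}-\overline\phi_{il}|^2$, i.e. to controlling the difference of the two configurations' normalized weights at the same index pair.

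To obtain this I would rerun the algebra of \cref{lem:a_lip}, but comparing \emph{across configurations} instead of across first indices. Writing $a_{il}=a(\|x_i-x_l\|)$, $\overline a_{il}=a(\|\overline x_i-\overline x_l\|)$, $S_i=\sum_k a_{ik}$, $\overline S_i=\sum_k\overline a_{ik}$, I would split $|\phi_{il}-\overline\phi_{il}|\le |a_{il}-\overline a_{il}|/S_i+\overline a_{il}|S_i-\overline S_i|/(S_i\overline S_i)$. This is exactly where \cref{thm: flocking} enters: under $(\mathcal{F}1)-(\mathcal{F}2)$ it yields $\|\Delta^x(n)\|_F\le M$ and $\|\Delta^{\overline x}(n)\|_F\le M$ for all $n$, so every pairwise distance satisfies $\|x_i-x_l\|\le M$ and $\|\overline x_i-\overline x_l\|\le M$, whence $|\,\|x_i-x_l\|-\|\overline x_i-\overline x_l\|\,|\le 2M$. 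The Lipschitz property \eqref{eq: phi_lip} then gives $|a_{il}-\overline a_{il}|\le 2L_aM$ and $|S_i-\overline S_i|\le 2NL_aM$; combined with $S_i,\overline S_i\ge Nc_1$ and $\overline a_{il}\le c_2$ this produces $|\phi_{il}-\overline\phi_{il}|\le \frac{2L_aM}{Nc_1}\big(1+\frac{c_2}{c_1}\big)=2M\|\phi\|_{\text{Lip}}$, hence $\sum_l|\phi_{il}-\overline\phi_{il}|^2\le 4NM^2\|\phi\|_{\text{Lip}}^2=\frac{4L_a^2M^2}{Nc_1^2}\big(1+\frac{c_2}{c_1}\big)^2$. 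Substituting back through the chain above reproduces exactly the claimed constant $16$.

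The hard part will be precisely this pointwise control of $|\phi_{il}-\overline\phi_{il}|$. A naive cross-configuration Lipschitz estimate would involve the absolute displacements $\|x_i-\overline x_i\|$ between the two ensembles, which are not bounded uniformly in time and would ruin the estimate. The observation that saves the argument is that the normalized weights depend only on the intra-configuration distances, each of which the flocking bound forces to stay below the shape diameter $M$; this converts a difference of distances into the time-independent bound $2M$. Everything else is Cauchy--Schwarz bookkeeping together with the index counting that supplies the factor $N$.
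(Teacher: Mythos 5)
Your proof is correct and lands on exactly the claimed constant $16$, but it takes a genuinely different route from the paper. The paper never goes to a uniform pointwise bound on $|\phi_{il}-\overline{\phi}_{il}|$: it first telescopes $\Delta_{li}^v(n)=\Delta_{ji}^v(n)+\Delta_{lj}^v(n)$ to split $\mathcal{I}_1^{ij}$ into a part with coefficients $(\phi_{il}-\overline{\phi}_{il})$ acting on $\Delta_{ji}^v(n)$ and a part with coefficients $(\phi_{il}-\phi_{jl})-(\overline{\phi}_{il}-\overline{\phi}_{jl})$ acting on $\Delta_{lj}^v(n)$, handles the second group via the Lipschitz estimate of \cref{lem:a_lip}, and bounds the first via $|a_{il}-\overline{a}_{il}|\le L_a\|\Delta_{il}^x-\Delta_{il}^{\overline{x}}\|$ — so its intermediate bound \eqref{D-10} is expressed in terms of $\|\Delta^x(n)-\Delta^{\overline{x}}(n)\|_F^2$ and $\|\Delta^x(n)\|_F^2+\|\Delta^{\overline{x}}(n)\|_F^2$, and the flocking bound $M$ is substituted only at the very last step. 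You instead exploit the symmetric structure $\mathcal{I}_1^{ij}=h\kappa(A_i-A_j)$ (your $B_j$ is literally $A_j$), pick up the factor $N$ from the free index, and convert the flocking bound $\|\Delta^x(n)\|_F,\|\Delta^{\overline{x}}(n)\|_F\le M$ into the time-independent pointwise bound $|\phi_{il}-\overline{\phi}_{il}|\le 2M\|\phi\|_{\text{Lip}}$ immediately; all the index accounting checks out, since $4h^2\kappa^2 N\cdot\frac{4L_a^2M^2}{Nc_1^2}(1+c_2/c_1)^2=\frac{16h^2\kappa^2L_a^2M^2}{c_1^2}(1+c_2/c_1)^2$. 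Your argument is shorter; the paper's buys a finer intermediate estimate in which the $\mathcal{I}_1$ contribution degrades gracefully as $\|\Delta^x-\Delta^{\overline{x}}\|_F\to 0$ (potentially useful for a sharper stability statement), although the paper itself then discards that refinement by bounding everything through $M$. Two small remarks: your bound $\big|\,\|x_i-x_l\|-\|\overline{x}_i-\overline{x}_l\|\,\big|\le 2M$ could be sharpened to $M$ since both distances lie in $[0,M]$, but $2M$ is what reproduces the stated constant; and your worry that a cross-configuration Lipschitz estimate would drag in the unbounded displacements $\|x_i-\overline{x}_i\|$ is slightly overstated — the paper's estimate $|a_{il}-\overline{a}_{il}|\le L_a\|\Delta_{il}^x-\Delta_{il}^{\overline{x}}\|$ shows the comparison can be kept at the level of shape discrepancies without ever passing to the uniform bound. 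Note also that, like the paper, your use of \cref{thm: flocking} for \emph{both} configurations is licensed by the framework $(\mathcal{F}1)$--$(\mathcal{F}2)$ standing behind \cref{L4.2}, so no hypothesis is missing.
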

\begin{proof}
Similar to the proof of \cref{lem:a_lip}, we have
\begin{align} 
\begin{aligned} \label{D-5-0-0}
	\left| \phi_{il}-\overline{\phi}_{il}\right|&=\left|\frac{a_{il}}{\sum_{k=1}^{N}a_{ik}}-\frac{\overline{a}_{il}}{\sum_{k=1}^{N}\overline{a}_{ik}}\right|=\left|\frac{a_{il}\sum_k\overline{a}_{ik}-\overline{a}_{il}\sum_ka_{ik}}{\sum_ka_{ik}\sum_k\overline{a}_{ik}}\right|\\\\
	&=\left|\frac{\left(a_{il}-\overline{a}_{il}\right)\sum_k\overline{a}_{ik}+\overline{a}_{il}\left(\sum_k\overline{a}_{ik}-\sum_ka_{ik}\right)}{\sum_ka_{ik}\sum_k\overline{a}_{ik}}\right|\\\\
	&\leq \frac{L_{a}}{Nc_1}\left \|\Delta_{il}^x-\Delta_{il}^{\overline{x}}\right \|+\frac{c_2L_{a}}{N^2c_1^2}\sum_{k=1}^N \left \|\Delta_{ik}^x-\Delta_{ik}^{\overline{x}}\right \|.
	\end{aligned}
\end{align}
We sum the square of \eqref{D-5-0-0} over $l$ to find 
\begin{align}\label{D-5-1}
	\begin{aligned}
	 &\sum_{l=1}^{N}| \phi_{il} - \overline{\phi}_{il}|^2 \\
	& \hspace{1cm} \leq \sum_{l=1}^{N} \left( \frac{L_{a}}{Nc_1}\left \|\Delta_{il}^x-\Delta_{il}^{\overline{x}}\right \|+\frac{c_2L_{a}}{N^2c_1^2}\sum_{k=1}^N \left \|\Delta_{ik}^x-\Delta_{ik}^{\overline{x}}\right \| \right)^2\\
		& \hspace{1cm} \leq \frac{L_a^2}{N^2c_1^2}\sum_{l=1}^{N} \|\Delta_{il}^x - \Delta_{il}^{\overline{x}} \|^2 + \frac{2c_2L_a^2}{N^3c_1^3}\left(\sum_{l=1}^N \|\Delta_{il}^x-\Delta_{il}^{\overline{x}} \|\right)\left(\sum_{k=1}^N \|\Delta_{ik}^x-\Delta_{ik}^{\overline{x}} \|\right) \\
		& \hspace{1cm} + \frac{c_2^2 L_a^2}{N^3c_1^4}\left(\sum_{k=1}^{N} \|\Delta_{ik}^x-\Delta_{ik}^x \|\right)^2\\
		& \hspace{1cm} \leq \frac{L_a^2}{N^2c_1^2} \left(1+\frac{c_2}{c_1}\right)^2 \sum_{l=1}^{N} \|\Delta_{il}^x - \Delta_{il}^{\overline{x}} \|^2\,,
	\end{aligned}
\end{align}
where we used Jensen's inequality for the last bound. 
Next, we use $\Delta_{li}^v(n) = \Delta_{ji}^v(n) + \Delta_{lj}^v(n)$ to find 
\begin{equation*}
	\mathcal{I}_1^{ij} = h\kappa\sum_{l=1}^{N}( \phi_{il} - \overline{\phi}_{il})\Delta^v_{ji}(n) + h\kappa\sum_{l=1}^{N}\left( \phi_{il} - \phi_{jl} -(\overline{\phi}_{il} - \overline{\phi}_{jl})\right)\Delta_{lj}^v(n)\,.
\end{equation*}
This leads to 
\begin{equation} \label{D-6-a}
	\|\mathcal{I}_1^{ij} \| \leq h\kappa\sum_{l=1}^{N}| \phi_{il} - \overline{\phi}_{il}| \|\Delta^v_{ji}(n) \| + h\kappa \sum_{l=1}^{N}\left(|\phi_{il} - \phi_{jl}| + |\overline{\phi}_{il} - \overline{\phi}_{jl}|\right) \|\Delta_{lj}^v(n) \|.
\end{equation}
We take the square on both sides of \eqref{D-6-a} and sum up the resulting relation over all $i, j$ to obtain
\begin{align} \label{eq: I_1bound}
\begin{aligned}
\sum_{i,j=1}^{N} \|\mathcal{I}_1^{ij} \|^2 \leq 2h^2\kappa^2 \sum_{i,j=1}^{N}&\left(\sum_{l=1}^N|\phi_{il} - \overline{\phi}_{il}|^2\sum_{l=1}^{N} \|\Delta_{ji}^v(n) \|^2  \right.
\\& \quad
+ \left. \sum_{l=1}^N\left(|\phi_{il} - \phi_{jl}| + |\overline{\phi}_{il} - \overline{\phi}_{jl}|\right)^2\sum_{l=1}^{N} \|\Delta_{lj}^v(n) \|^2\right),
\end{aligned}
\end{align}
where we used the Cauchy-Schwarz inequality. 

\noindent Now, we use \eqref{D-5-1} to see that the first part in the right-hand side of \eqref{eq: I_1bound} can be bounded by
\begin{align} \label{D-8}
	\begin{aligned}
		&\sum_{i,j=1}^N\left(\sum_{l=1}^{N}| \phi_{il} - \overline{ \phi}_{il}|^2\sum_{l=1}^{N} \|\Delta_{ji}^v(n) \|^2\right)\\
		&\hspace{0.5cm} \leq \frac{L_a^2}{N^2c_1^2} \left(1+\frac{c_2}{c_1}\right)^2 \sum_{i,j=1}^{N} \left(\sum_{l=1}^{N} \|\Delta_{il}^x(n)-\Delta_{il}^{\overline{x}}(n) \|^2 \sum_{l=1}^{N} \|\Delta_{ji}^v(n) \|^2 \right)\\
		&\hspace{0.5cm} \leq \frac{L_a^2}{N^2c_1^2} \left(1+\frac{c_2}{c_1}\right)^2 \left(\sum_{j=1}^{N}\sum_{i,l=1}^{N} \|\Delta_{il}^x(n) - \Delta_{il}^{\overline{x}}(n) \|^2\right)\times \left(\sum_{l=1}^{N}\sum_{j,i=1}^{N} \|\Delta_{ji}^v(n)\|^2\right)\\
		&\hspace{0.5cm} = \frac{L_a^2}{c_1^2} \left(1+\frac{c_2}{c_1}\right)^2\|\Delta^x(n) - \Delta^{\overline{x}}(n)\|_F^2\|\Delta^v(n)\|_F^2\,.
	\end{aligned}
\end{align}
Next, the second part in the right-hand side of \eqref{eq: I_1bound} can be bounded by
\begin{align} \label{D-9}
	\begin{aligned}
		&\sum_{i,j=1}^{N}\sum_{l=1}^{N}\left(| \phi_{il} -  \phi_{jl}| + |\overline{\phi}_{il} - \overline{ \phi}_{jl}|\right)^2\sum_{l=1}^{N} \|\Delta_{lj}^v(n) \|^2\\
		& \hspace{0.5cm}\leq \sum_{i,j=1}^{N} \| \phi\|_{\text{Lip}}^2\sum_{l=1}^N \Big( \|\Delta_{ij}^x(n) \| + \|\Delta_{ij}^{\overline{x}}(n) \| \Big)^2\sum_{l=1}^{N} \|\Delta_{lj}^v(n) \|^2\\
		& \hspace{0.5cm} \leq \sum_{i,j=1}^{N} \| \phi\|_{\text{Lip}}^2N\cdot 2 \left( \|\Delta_{ij}^x(n) \|^2+ \|\Delta_{ij}^{\overline{x}}(n) \|^2\right)\sum_{l=1}^{N} \|\Delta_{lj}^v(n) \|^2\\
		& \hspace{0.5cm} \leq 2\| \phi\|_{\text{Lip}}^2N \left(\sum_{i,j=1}^{N}\left( \|\Delta_{ij}^x(n) \|^2+ \|\Delta_{ij}^{\overline{x}}(n) \|^2\right)\right)\times \left(\sum_{l,j=1}^{N} \|\Delta_{lj}^v(n) \|^2\right)\\
		&\hspace{0.5cm} \leq \frac{2L_a^2}{Nc_1^2}\ \Big( 1+\frac{c_2}{c_1} \Big)^2\left( \|\Delta^x(n)\|_F^2+\|\Delta^{\overline{x}}(n)\|_F^2\right)\|\Delta^v(n)\|_F^2\,,
	\end{aligned}
\end{align}
where we used again \cref{lem:a_lip}.
Now we combine \eqref{eq: I_1bound}, \eqref{D-8}, \eqref{D-9} and use flocking estimate to derive the desired estimate:
\begin{align} \label{D-10}
	\begin{aligned}
		\sum_{i,j=1}^{N} \|\mathcal{I}_1^{ij} \|^2 &\leq 2h^2\kappa^2 \left[\frac{L_a^2}{c_1^2}\left(1+\frac{c_2}{c_1}\right)^2\|\Delta^x(n) - \Delta^{\overline{x}}(n)\|_F^2\right.\\ 
		&\hspace{2cm} + \left.\frac{2L_a^2}{Nc_1^2}\left(1+\frac{c_2}{c_1}\right)^2\left(\|\Delta^x(n)\|_F^2+\|\Delta^{\overline{x}}(n)\|_F^2\right)\right]\|\Delta^v(n)\|_F^2\\
		&\leq \frac{2h^2\kappa^2L_a^2}{c_1^2}\left(1+\frac{c_2}{c_1}\right)^2\Big[2\left(\|\Delta^x(n)\|_F^2 +  \|\Delta^{\overline{x}}(n)\|_F^2\right)\\ 
		&\hspace{4cm}  + 2\left(\|\Delta^x(n)\|_F^2+\|\Delta^{\overline{x}}(n)\|_F^2\right)\Big]\|\Delta^v(n)\|_F^2\\
		&\leq \frac{16h^2\kappa^2L_a^2M^2}{c_1^2}\left(1+\frac{c_2}{c_1}\right)^2\|\Delta^v(n)\|_F^2\,.
	\end{aligned}
\end{align}
\end{proof}

\begin{lemma}  \label{lem: I2_bound}  
For $i, j \in [N]$,  let ${\mathcal I}_2^{ij}$ be the quantity defined in \cref{L4.2}. Then, we have
\[
\sum_{i,j=1}^{N} \|\mathcal{I}_2^{ij} \|^2  \leq 2h^2\kappa^2 \left(\alpha + \frac{L_a^2M^2}{Nc_1^2}\left(1+\frac{c_2}{c_1}\right)^2 \right)\|\Delta^v(n) - \Delta^{\overline{v}}(n)\|_F^2\,, \]
where $\alpha=\alpha(n)$ is given in \eqref{D-alpha}.
\end{lemma}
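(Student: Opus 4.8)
The plan is to first rewrite $\cI_2^{ij}$ so that the two structural constants $\alpha$ and the Lipschitz quantity arise from two cleanly separated sources. Setting $w_{li}:=\Delta_{li}^v(n)-\Delta_{li}^{\overline{v}}(n)$ and using additivity $\Delta_{lj}^v=\Delta_{li}^v+\Delta_{ij}^v$ (so that $w_{lj}=w_{li}+w_{ij}$), I would split
\[
\cI_2^{ij}=h\kappa\sum_{l\neq i,j}(\overline{\phi}_{il}-\overline{\phi}_{jl})w_{li}-h\kappa\Big(\sum_{l\neq i,j}\overline{\phi}_{jl}\Big)w_{ij}=:h\kappa P_{ij}-h\kappa Q_{ij}.
\]
Then $\|\cI_2^{ij}\|^2\le 2h^2\kappa^2(\|P_{ij}\|^2+\|Q_{ij}\|^2)$, which already supplies the prefactor $2h^2\kappa^2$ appearing in the claim; the two pieces $P_{ij}$ and $Q_{ij}$ will produce the Lipschitz term and the $\alpha$ term, respectively.

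For the $Q$-term, the unit-sum relation $\sum_l\overline{\phi}_{jl}=1$ gives $\sum_{l\neq i,j}\overline{\phi}_{jl}=1-\overline{\phi}_{jj}-\overline{\phi}_{ji}$, so that $Q_{ij}=(1-\overline{\phi}_{jj}-\overline{\phi}_{ji})w_{ij}$ and $\|Q_{ij}\|^2=\alpha_{ji}\|w_{ij}\|^2$, with $\alpha_{ji}=(1-\overline{\phi}_{ji}-\overline{\phi}_{jj})^2$. Since relabelling $i\leftrightarrow j$ is a bijection of ordered pairs, $\alpha=\max_{ij}\alpha_{ij}=\max_{ij}\alpha_{ji}$, and summing over $i,j$ yields $\sum_{ij}\|Q_{ij}\|^2\le\alpha\,\|\Delta^v(n)-\Delta^{\overline{v}}(n)\|_F^2$.

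For the $P$-term I would bound $\|P_{ij}\|\le\sum_{l\neq i,j}|\overline{\phi}_{il}-\overline{\phi}_{jl}|\,\|w_{li}\|$ and apply Cauchy-Schwarz in $l$, then invoke \cref{lem:a_lip} to get $\sum_{l}|\overline{\phi}_{il}-\overline{\phi}_{jl}|^2\le N\|\phi\|_{\text{Lip}}^2\|\Delta_{ij}^{\overline{x}}\|^2=\frac{L_a^2}{Nc_1^2}(1+\tfrac{c_2}{c_1})^2\|\Delta_{ij}^{\overline{x}}\|^2$, leaving $\|P_{ij}\|^2\le\frac{L_a^2}{Nc_1^2}(1+\tfrac{c_2}{c_1})^2\|\Delta_{ij}^{\overline{x}}\|^2\sum_{l}\|w_{li}\|^2$. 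The delicate point — and the main obstacle — is the subsequent summation over $i,j$: replacing $\|\Delta_{ij}^{\overline{x}}\|^2$ by the pointwise bound $M^2$ too early and then summing over $j$ would introduce a spurious factor $N$ and destroy the $\tfrac1N$ in the target constant. Instead I would keep $\|\Delta_{ij}^{\overline{x}}\|^2$ and use $\sum_{j}\|\Delta_{ij}^{\overline{x}}\|^2\le\|\Delta^{\overline{x}}\|_F^2\le M^2$ for each fixed $i$ (the last inequality from \cref{thm: flocking} applied to $\overline{X}$ under $(\mathcal F1)$--$(\mathcal F2)$). This gives $\sum_{ij}\|\Delta_{ij}^{\overline{x}}\|^2\sum_{l}\|w_{li}\|^2\le M^2\|\Delta^v(n)-\Delta^{\overline{v}}(n)\|_F^2$ and hence $\sum_{ij}\|P_{ij}\|^2\le\frac{L_a^2M^2}{Nc_1^2}(1+\tfrac{c_2}{c_1})^2\|\Delta^v(n)-\Delta^{\overline{v}}(n)\|_F^2$.

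Combining the two bounds through $\|\cI_2^{ij}\|^2\le 2h^2\kappa^2(\|P_{ij}\|^2+\|Q_{ij}\|^2)$ and summing over $i,j$ reproduces the stated estimate exactly. The only genuinely subtle bookkeeping is securing the $\tfrac1N$ factor in the Lipschitz contribution, which hinges on exploiting the Frobenius structure of $\Delta^{\overline{x}}$ rather than the crude pointwise diameter bound.
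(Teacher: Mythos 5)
Your proof is correct and takes essentially the same route as the paper: the paper's proof expands $\Delta_{li}^v = \Delta_{ji}^v + \Delta_{lj}^v$ so that its $\alpha$-term carries the weights $\overline{\phi}_{il}$ with coefficient $\alpha_{ij}$ while yours carries $\overline{\phi}_{jl}$ with $\alpha_{ji}$ --- a harmless mirror image, since $\alpha$ is the maximum over all ordered pairs --- and then, just as you do, applies Cauchy--Schwarz in $l$ together with \cref{lem:a_lip} and sums in Frobenius norm using $\|\Delta^{\overline{x}}(n)\|_F \le M$ from \cref{thm: flocking}. The ``delicate point'' you flag is handled identically in spirit by the paper, which likewise retains $\sum_{i,j}\|\Delta_{ij}^{\overline{x}}\|^2 = \|\Delta^{\overline{x}}(n)\|_F^2$ rather than using a pointwise bound, so the $1/N$ factor survives in both arguments.
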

\begin{proof}
In \eqref{eq: I_1bound}, we use 
\[ \Delta_{li}^v(n) = \Delta_{ji}^v(n) + \Delta_{lj}^v(n) \]
to get 
\begin{equation} \label{D-11}
	\mathcal{I}_2^{ij} = h\kappa\sum_{l\neq i,j}\Big[\overline{ \phi}_{il}\left(\Delta_{ji}^v(n)-\Delta_{ji}^{\overline{v}}(n)\right) + (\overline{ \phi}_{il} - \overline{ \phi}_{jl})\left(\Delta_{lj}^v(n)-\Delta_{lj}^{\overline{v}}(n)\right)\Big].
\end{equation}
We take the square of both sides of \eqref{D-11} and sum up the resulting relation over all $i,j$ to obtain
\begin{align} \label{D-12}
 \begin{aligned}
&\sum_{i,j=1}^{N} \|\mathcal{I}_2^{ij} \|^2 \\
&\hspace{0.3cm}\leq 2h^2\kappa^2 \sum_{i,j=1}^{N} \left(\left \|\sum_{l\neq i,j}\overline{ \phi}_{il}(\Delta_{ji}^v(n) - \Delta_{ji}^{\overline{v}}(n))\right \|^2 + \left \|\sum_{l\neq i,j}(\overline{ \phi}_{il} - \overline{ \phi}_{jl})\left(\Delta_{lj}^v(n) - \Delta_{lj}^{\overline{v}}(n)\right) \right \|^2 \right) \\
&\hspace{0.3cm} =: 2h^2 \kappa^2 ( {\mathcal I}_{21} +   {\mathcal I}_{22} ).
\end{aligned}
\end{align}
By direct estimate, one has 
\begin{align} \label{D-13}
\begin{aligned}
{\mathcal I}_{21} = \sum_{i,j=1}^{N} \left \|\sum_{l\neq i,j}\overline{ \phi}_{il}\left(\Delta^v_{ji}(n) - \Delta^{\overline{v}}_{ji}(n)\right)\right \|^2 &= \sum_{i,j=1}^N \underbrace{\left(1-\overline{ \phi}_{ii} - \overline{\phi}_{ij}\right)^2}_{:=\alpha_{ij}}\left \|\Delta_{ji}^v(n) - \Delta_{ji}^{\overline{v}}(n)\right \|^2,
	\end{aligned}
\end{align}
and
\begin{align} \label{D-14}
\begin{aligned}
{\mathcal I}_{22} &=	 \sum_{i,j=1}^{N}\left \|\sum_{l\neq i,j}(\overline{\phi}_{il} - \overline{\phi}_{jl})\left(\Delta_{lj}^v(n) - \Delta_{lj}^{\overline{v}}(n)\right) \right \|^2 \\
&\leq \sum_{i,j=1}^{N} \left(\sum_{l\neq i,j}\left(\overline{\phi}_{il}-\overline{\phi}_{jl}\right)^2\sum_{l\neq i,j}\left \|\Delta_{lj}^v(n) - \Delta_{lj}^{\overline{v}}(n)\right \|^2\right)\\
&\leq \sum_{i,j=1}^{N}(N-2)\|\phi\|_{\text{Lip}}^2 \|\Delta_{ij}^{\overline{x}} \|^2\sum_{l\neq i,j}\left \|\Delta_{lj}^v(n) - \Delta_{lj}^{\overline{v}}(n)\right \|^2
\\&\leq (N-2)\|\phi \|_{\text{Lip}}^2\|\Delta^{\overline{x}}(n)\|_F^2\|\Delta^v(n) - \Delta^{\overline{v}}(n)\|_F^2\,.\\
	\end{aligned}
\end{align}
Substituting \eqref{D-13}, \eqref{D-14}, \eqref{D-alpha} into  \eqref{D-12}, we obtain 
\begin{align} \label{D-16}
	\begin{aligned}
		\sum_{i,j=1}^N\|\mathcal{I}_{2}^{ij}\|^2 &\leq 2h^2\kappa^2 \left(\alpha + \frac{(N-2)L_a^2}{N^2c_1^2}\left(1+\frac{c_2}{c_1}\right)^2\|\Delta^x(n)\|_F^2 \right)\|\Delta^v(n) - \Delta^{\overline{v}}(n)\|_F^2\\
		&\leq 2h^2\kappa^2 \left(\alpha + \frac{L_a^2M^2}{Nc_1^2}\left(1+\frac{c_2}{c_1}\right)^2 \right)\|\Delta^v(n) - \Delta^{\overline{v}}(n)\|_F^2\,.
	\end{aligned}
\end{align}
\end{proof}

Now we are ready to conclude the proof of \cref{P4.5}.
\begin{proof}[Proof of \cref{P4.5}]
We estimate the last term in right-hand side of \eqref{D-5-0} using the Cauchy-Schwarz inequality and \cref{lem: I1_bound} and \cref{lem: I2_bound} to obtain
\begin{align}
		&\sum_{i,j=1}^{N} \left \|\Delta_{ij}^v(n)-\Delta_{ij}^{\overline{v}}(n)\right \|\cdot\left( \|\mathcal{I}_1^{ij} \| + \|\mathcal{I}_2^{ij} \|\right) \notag\\
		&\hspace{1cm}\leq \left(\sum_{i,j=1}^{N} \|\Delta_{ij}^v(n) - \Delta_{ij}^{\overline{v}}(n) \|^2\right)^{1/2} \left[\left(\sum_{i,j=1}^{N} \|\mathcal{I}_1^{ij} \|^2\right)^{1/2} + \left(\sum_{i,j=1}^{N} \|\mathcal{I}_2^{ij} \|^2\right)^{1/2}\right]\notag\\
		&\hspace{1cm} \leq \sqrt{2}h\kappa\left(\alpha + \frac{L^2_\phi M^2}{Nc_1^2}\left(1+\frac{c_2}{c_1}\right)^2\right)^{1/2}\|\Delta^v(n) - \Delta^{\overline{v}}(n)\|_F^2\notag\\
		&\hspace{1.2cm} +\frac{4h\kappa L_a M}{c_1}\left(1+\frac{c_2}{c_1}\right)\|\Delta^v(n) - \Delta^{\overline{v}}(n)\|_F \cdot \|\Delta^v(n)\|_F\,. \label{D-17}
\end{align}
Finally, we estimate \eqref{D-5-0} using \eqref{D-10}, \eqref{D-16}, and \eqref{D-17} to obtain the desired estimate in \cref{P4.5}:
\begin{align*}
	\begin{aligned}
		&\|\Delta^v(n+1)-\Delta^{\overline{v}}(n+1)\|_F^2\\
		&\hspace{1cm}\leq (1-h\kappa\lambda)^2 \|\Delta^v(n) - \Delta^{\overline{v}}(n)\|_F^2 + \frac{32h^2\kappa^2L_a^2M^2}{c_1^2}\left(1+\frac{c_2}{c_1}\right)^2\|\Delta^v(n)\|_F^2\\
		&\hspace{1.3cm} + 4h^2\kappa^2\left(\alpha+\frac{L_a^2M^2}{Nc_1^2}\left(1+\frac{c_2}{c_1}\right)^2\right)\|\Delta^v(n)-\Delta^{\overline{v}}(n)\|_F^2\\
		&\hspace{1.3cm} + 2\sqrt{2}h\kappa (1-h\kappa \lambda) \left(\alpha+\frac{L_a^2M^2}{c_1^2}\left(1+\frac{c_2}{c_1}\right)^2\right)^{1/2}\|\Delta^v(n) - \Delta^{\overline{v}}(n)\|_F^2\\
		&\hspace{1.3cm} + \frac{8h\kappa L_a M}{c_1}(1-h\kappa\lambda)\left(1+\frac{c_2}{c_1}\right)\|\Delta^v(n) - \Delta^{\overline{v}}(n)\|_F\|\Delta^v(n)\|_F.
	\end{aligned}
\end{align*}
\end{proof}

%%%%%%%%%%%%%%%%%%%%%%%%%%%%%%
\subsection{Decay of velocity shape discrepancy}\label{sec:4.2}
The main goal of this section is to prove the following restatement of $\eqref{D-0-0}_2$. 
\begin{lemma}\label{lem:D-0-0-b}
For $h>0$ sufficiently small, suppose that framework $(\mathcal{F}1) - (\mathcal{F}2)$ holds and let $(X,V)$ and $(\overline{X}, \overline{V})$ be solutions to \eqref{eq: discrete_MT}, respectively. For any $0\le\epsilon\le 1$ there exists $b_1=b_1(\epsilon,h), b_2=b_2(h)>0$ given by
\begin{align*}
b_1(\epsilon,h)&:=\left(\bar c_0 \epsilon+\bar c_1 h + \bar c_2 h^2\right)^{1/2}\,,\qquad b_2(h):=C\kappa \psi(M)h
\end{align*}
for constants $\bar c_0, \bar c_1, \bar c_2$ depending on the initial condition $V(0)$, $\overline{V}(0)$ and all model parameters,
such that
\begin{equation}\label{D-19}
		\|\Delta^v(n+1) - \Delta^{\overline{v}}(n+1)\|_F \leq (1-\epsilon)\|\Delta^v(n) - \Delta^{\overline{v}}(n)\|_F + b_1e^{-b_2n}\,.
\end{equation}  
\end{lemma}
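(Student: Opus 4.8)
The plan is to convert the quadratic recursion of \cref{P4.5} into the linear recursion \eqref{D-19}, using the exponential decay of the individual velocity shape discrepancies as the engine. Write $D(n):=\|\Delta^v(n)-\Delta^{\overline{v}}(n)\|_F$ for brevity. Since both $(X,V)$ and $(\overline{X},\overline{V})$ satisfy $(\mathcal{F}1)-(\mathcal{F}2)$, \cref{thm: flocking} applies to each solution with the \emph{same} threshold $M$, and hence the \emph{same} decay rate $b_2=b_2(h):=C\kappa\psi(M)h$: one has $\|\Delta^v(n)\|_F\le\|\Delta^v(0)\|_F e^{-b_2 n}$ and $\|\Delta^{\overline{v}}(n)\|_F\le\|\Delta^{\overline{v}}(0)\|_F e^{-b_2 n}$. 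The triangle inequality then yields the a priori bound
\[
D(n)\le\big(\|\Delta^v(0)\|_F+\|\Delta^{\overline{v}}(0)\|_F\big)e^{-b_2 n},\qquad D(n)^2\le 2\big(\|\Delta^v(0)\|_F^2+\|\Delta^{\overline{v}}(0)\|_F^2\big)e^{-2b_2 n},
\]
which is the crucial input for everything that follows.

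The central idea is to insert the desired contraction factor by hand, splitting the leading term in the estimate of \cref{P4.5} as $\cC_1(n,h)D(n)^2=(1-\epsilon)^2D(n)^2+[\cC_1(n,h)-(1-\epsilon)^2]D(n)^2$, so that
\[
D(n+1)^2\le(1-\epsilon)^2 D(n)^2+[\cC_1(n,h)-(1-\epsilon)^2]D(n)^2+\cC_2(n,h)D(n)\|\Delta^v(n)\|_F+\cC_3(n,h)\|\Delta^v(n)\|_F^2.
\]
Each of the last three terms carries at least one factor of $D(n)$ or $\|\Delta^v(n)\|_F$, both $O(e^{-b_2 n})$ by the previous paragraph; substituting those decay bounds collapses them into a single remainder $b_1^2 e^{-2b_2 n}$, where $b_1^2$ is the sum of the three coefficients weighted by the squared initial shape discrepancies. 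Importantly, the quantities $\lambda(n),\alpha(n)$ entering $\cC_1,\cC_2,\cC_3$ are bounded by model constants uniformly in $n$, so $b_1$ can be taken independent of $n$ after a supremum.

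To read off the advertised form of $b_1$, I would use the explicit expressions in \cref{P4.5}: as $h\to0$ we have $\cC_1(n,h)\to1$, whence $\cC_1-(1-\epsilon)^2=(2\epsilon-\epsilon^2)+O(h)\le 2\epsilon+O(h)$, while $\cC_2=O(h)$ and $\cC_3=O(h^2)$ uniformly in $n$. Collecting the $\epsilon$-, $h$- and $h^2$-contributions gives $b_1^2=\bar c_0\epsilon+\bar c_1 h+\bar c_2 h^2$ with $\bar c_0=4(\|\Delta^v(0)\|_F^2+\|\Delta^{\overline{v}}(0)\|_F^2)$ and $\bar c_1,\bar c_2$ depending only on the initial data and model parameters. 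Finally, from $D(n+1)^2\le(1-\epsilon)^2 D(n)^2+b_1^2 e^{-2b_2 n}$ the elementary subadditivity $\sqrt{u+w}\le\sqrt{u}+\sqrt{w}$ for $u,w\ge0$ produces $D(n+1)\le(1-\epsilon)D(n)+b_1 e^{-b_2 n}$, which is exactly \eqref{D-19}.

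The main obstacle is conceptual rather than computational. The leading coefficient $\cC_1(n,h)$ furnishes no genuine contraction: it tends to $1$ and is in fact typically larger than $1$ for small $h$ (for instance when $\sqrt{2}(\alpha+\cdots)^{1/2}>\lambda$), so the factor $(1-\epsilon)$ cannot be extracted from the dynamics and must be inserted artificially, at the cost of the non-vanishing term $\bar c_0\epsilon$ in $b_1$. What rescues the argument is precisely the a priori flocking decay of $D(n)$ itself: the excess $[\cC_1-(1-\epsilon)^2]D(n)^2$ has an $O(1)$ coefficient but multiplies an exponentially small quantity, so it may be folded into the remainder. Keeping this trade-off quantitatively honest — matching a single decay rate $b_2$ for both solutions and tracking the $\epsilon/h$ bookkeeping so that $b_1\to\sqrt{\bar c_0\epsilon}$ as $h\to0$ — is the delicate point on which Steps B and C of \cref{thm: stability} ultimately rely.
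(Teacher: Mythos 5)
Your proof is correct and takes essentially the same route as the paper: both start from \cref{P4.5}, insert the contraction artificially via the split $\cC_1 = (1-\epsilon)^2 + [\cC_1-(1-\epsilon)^2]$ (the paper writes this as $\cC_1 - 1 + 2\epsilon - \epsilon^2$), use the uniform-in-$n$ bounds $\lambda(n)\ge 2c_1/(Nc_2)$ and $\alpha(n)\le 1$ together with the flocking decay of \cref{thm: flocking} applied to both solutions at the common rate $b_2 = C\kappa\psi(M)h$ to absorb everything into $b_1^2 e^{-2b_2 n}$ with the same $\bar c_0 = 4\left(\|\Delta^v(0)\|_F^2+\|\Delta^{\overline{v}}(0)\|_F^2\right)$, and conclude by subadditivity of the square root. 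The only cosmetic difference is that the paper first applies a Young-type inequality to the cross term $\cC_2\,\|\Delta^v(n)-\Delta^{\overline{v}}(n)\|_F\,\|\Delta^v(n)\|_F$ before invoking the decay, whereas you bound each factor by its exponential decay directly---an immaterial variation in the constants $\bar c_1, \bar c_2$.
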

\begin{proof}
We have from \cref{P4.5} that
    \[\|\Delta^v(n+1)-\Delta^{\overline{v}}(n+1)\|_F^2\leq (1-\epsilon)^2 \|\Delta^v(n) - \Delta^{\overline{v}}(n)\|_F^2 +\mathcal{I}_{3},\]
    where the term $ \mathcal{I}_{3}$ is defined as follows: 
    \begin{align*}
    \mathcal{I}_{3} &:= (\cC_1-1+2\epsilon-\epsilon^2) \|\Delta^v(n) - \Delta^{\overline{v}}(n)\|_F^2 + \cC_2 \|\Delta^v(n) - \Delta^{\overline{v}}(n)\|_F\|\Delta^v(n)\|_F + \cC_3 \|\Delta^v(n)\|_F^2\\
    &\le (\cC_1-1+2\epsilon-\epsilon^2 + \frac{\cC_2}{2}) \|\Delta^v(n) - \Delta^{\overline{v}}(n)\|_F^2 
    +\left(\cC_3+\frac{\cC_2}{2}\right) \|\Delta^v(n)\|_F^2\,.\\
    \end{align*}
    Note that 
\[\overline{\phi}_{ij}(n) = \frac{\overline{a}_{ij}}{\sum_{k=1}^{N}\overline{a}_{ik}}\geq \frac{c_1}{Nc_2} \quad \mbox{and} \quad \frac{2c_1}{Nc_2} \leq \lambda(n) \leq 2, \]
Thus, we have
\[\alpha(n)\leq 1.\] 
Choosing a sufficiently small $h>0$ such that
\begin{equation*} 
	0 < h < \frac{c_1}{\kappa N c_2\left\{2
  +\frac{L_a^2M^2}{Nc_1^2}\left(1+\frac{c_2}{c_1}\right)^2\right\}}\,,
\end{equation*}
we estimate from above independent of $n$,
      \begin{align*}
  &\cC_1(n,h)-1+2\epsilon-\epsilon^2\\
  &\quad\le\epsilon(2-\epsilon)-2h\kappa\frac{2c_1}{Nc_2}+ 4h^2\kappa^2
  + 4h^2\kappa^2\left(1+\frac{L_a^2M^2}{Nc_1^2}\left(1+\frac{c_2}{c_1}\right)^2\right)\\
   &\qquad+ 2\sqrt{2}h\kappa \left(1-h\kappa \frac{2c_1}{Nc_2}\right) \left(1+\frac{L_a^2M^2}{c_1^2}\left(1+\frac{c_2}{c_1}\right)^2\right)^{1/2}\\
    &\quad\le\epsilon(2-\epsilon)-4h\kappa\left[\frac{c_1}{Nc_2}- h\kappa\left\{2
  +\frac{L_a^2M^2}{Nc_1^2}\left(1+\frac{c_2}{c_1}\right)^2\right\}\right]\\
   &\qquad+ 2\sqrt{2}h\kappa  \left(1+\frac{L_a^2M^2}{c_1^2}\left(1+\frac{c_2}{c_1}\right)^2\right)^{1/2}\\
    % &\quad\le\epsilon(2-\epsilon)+ 2\sqrt{2}h\kappa  \left(1+\frac{L_a^2M^2}{c_1^2}\left(1+\frac{c_2}{c_1}\right)^2\right)^{1/2}\\
    &\quad\le\epsilon(2-\epsilon)+ 2\sqrt{2}h\kappa  \left(1+\frac{L_a M}{c_1}\left(1+\frac{c_2}{c_1}\right)\right)\,.
\end{align*}
In addition, for small enough $h>0$,
\begin{align*}
      \cC_2(n,h)&\le\frac{8h\kappa L_a M}{c_1}\left(1-h\kappa\frac{2c_1}{Nc_2}\right)\left(1+\frac{c_2}{c_1}\right)
      \le\frac{8h\kappa L_a M}{c_1}\left(1+\frac{c_2}{c_1}\right)
      \,,\\
    \cC_3(n,h)&=\frac{32h^2\kappa^2L_a^2M^2}{c_1^2}\left(1+\frac{c_2}{c_1}\right)^2\,.
\end{align*}
Now we use the exponential decay of $\|\Delta^{v}(n)\|_F$ and $\|\Delta^{\overline{v}}(n)\|_F$ by \cref{thm: flocking} to see
\begin{align*}
    \mathcal{I}_{3} 
    &\le \left(\epsilon(2-\epsilon)+ 2\sqrt{2}h\kappa  \left(1+\frac{L_aM}{c_1}\left(1+\frac{c_2}{c_1}\right)\right) +\frac{4h\kappa L_a M}{c_1}\left(1+\frac{c_2}{c_1}\right) \right) \|\Delta^v(n) - \Delta^{\overline{v}}(n)\|_F^2 \\
    &\quad +\left(\frac{32h^2\kappa^2L_a^2M^2}{c_1^2}\left(1+\frac{c_2}{c_1}\right)^2
    +\frac{4h\kappa L_a M}{c_1}\left(1+\frac{c_2}{c_1}\right)\right) \|\Delta^v(n)\|_F^2\\
    &\le \left(\epsilon(2-\epsilon)+ 2\sqrt{2}h\kappa  \left(1+\frac{L_aM}{c_1}\left(1+\frac{c_2}{c_1}\right)\right) +\frac{4h\kappa L_a M}{c_1}\left(1+\frac{c_2}{c_1}\right) \right) \times 2(\|\Delta^v(n)\|_F^2\\
    &\quad  + \|\Delta^{\overline{v}}(n)\|_F^2)+\left(\frac{32h^2\kappa^2L_a^2M^2}{c_1^2}\left(1+\frac{c_2}{c_1}\right)^2
    +\frac{4h\kappa L_a M}{c_1}\left(1+\frac{c_2}{c_1}\right)\right) \|\Delta^v(n)\|_F^2\\
    &\le \left(2\epsilon+ 8h\kappa  \left(1+\frac{L_aM}{c_1}\left(1+\frac{c_2}{c_1}\right)\right)\right) \cdot 2(\|\Delta^v(0)\|_F^2 + \|\Delta^{\overline{v}}(0)\|_F^2) e^{-2C\kappa \psi(M)nh}  \\
    &\quad +\left(\frac{32h^2\kappa^2L_a^2M^2}{c_1^2}\left(1+\frac{c_2}{c_1}\right)^2
    +\frac{4h\kappa L_a M}{c_1}\left(1+\frac{c_2}{c_1}\right)\right)\|\Delta^v(0)\|_F^2 e^{-2C\kappa \psi(M)nh} \\
    &= b_1^2 e^{-2b_2 n},
    \end{align*}
where
\begin{align*}
b_1(\epsilon,h)&:=\left(\bar c_0 \epsilon+\bar c_1 h + \bar c_2 h^2\right)^{1/2}\,,\qquad b_2(h):=C\kappa \psi(M)h,
\end{align*}
for constants $\bar c_0, \bar c_1, \bar c_2$ depending on the initial condition $V(0)$, $\overline{V}(0)$ and all model parameters,
\begin{align*}
    \bar c_0&:=4(\|\Delta^v(0)\|_F^2 + \|\Delta^{\overline{v}}(0)\|_F^2)\,,\\
    \bar c_1&:= 16\kappa  \left(1+\frac{L_aM}{c_1}\left(1+\frac{c_2}{c_1}\right)\right)  (\|\Delta^v(0)\|_F^2 + \|\Delta^{\overline{v}}(0)\|_F^2)
    + \frac{4\kappa L_a M}{c_1}\left(1+\frac{c_2}{c_1}\right)\|\Delta^v(0)\|_F^2\,,\\
    \bar c_2&:=\frac{32\kappa^2L_a^2M^2}{c_1^2}\left(1+\frac{c_2}{c_1}\right)^2
    \|\Delta^v(0)\|_F^2\,.
\end{align*}
\end{proof}

\begin{corollary}\label{cor: v_fluctuation_bound}
    For $h>0$ sufficiently small, suppose that framework $(\mathcal{F}1) - (\mathcal{F}2)$ holds and let $(X,V)$ and $(\overline{X}, \overline{V})$ be solutions to \eqref{eq: discrete_MT}, respectively. Then for any $0<\epsilon < 1$,
    \[\|\Delta^v(n) - \Delta^{\overline{v}}(n)\|_F\leq (1-\epsilon)^n\|\Delta^v(0) - \Delta^{\overline{v}}(0)\|_F + \frac{b_1e^{b_2}}{1-(1-\epsilon)e^{b_2}}e^{-b_2n}\,.\]
\end{corollary}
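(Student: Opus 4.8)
The plan is to recognize the estimate \eqref{D-19} from \cref{lem:D-0-0-b} as an inhomogeneous linear recursion and solve it explicitly. Writing $a_n := \|\Delta^v(n) - \Delta^{\overline{v}}(n)\|_F$, $\rho := 1-\epsilon \in (0,1)$, and $q := e^{-b_2} \in (0,1)$, the inequality \eqref{D-19} reads
\[
a_{n+1} \le \rho\, a_n + b_1 q^n, \qquad n \ge 0.
\]
Since $b_2 = C\kappa\psi(M)h \to 0$ as $h \to 0$, for $h$ sufficiently small we have $(1-\epsilon)e^{b_2} < 1$, equivalently $q > \rho$, so the denominator $1 - (1-\epsilon)e^{b_2}$ appearing in the claimed bound is strictly positive. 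I set $D := \dfrac{b_1}{q - \rho} = \dfrac{b_1 e^{b_2}}{1 - (1-\epsilon)e^{b_2}}$; the goal is to show $a_n \le \rho^n a_0 + D\, q^n$, which is exactly the assertion.

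I would prove this by induction on $n$. The base case $n=0$ is immediate, since $a_0 \le a_0 + D$ because $D \ge 0$. For the inductive step, assuming $a_n \le \rho^n a_0 + D q^n$, the recursion gives
\[
a_{n+1} \le \rho\left(\rho^n a_0 + D q^n\right) + b_1 q^n = \rho^{n+1} a_0 + (\rho D + b_1)q^n.
\]
It then suffices to verify $\rho D + b_1 \le D q$, i.e. $b_1 \le D(q - \rho)$, which holds with equality by the very definition of $D$. This closes the induction and yields
\[
a_n \le \rho^n a_0 + D q^n = (1-\epsilon)^n \|\Delta^v(0) - \Delta^{\overline{v}}(0)\|_F + \frac{b_1 e^{b_2}}{1-(1-\epsilon)e^{b_2}}\, e^{-b_2 n},
\]
as claimed in \cref{cor: v_fluctuation_bound}.

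Alternatively, one may unroll the recursion directly to obtain $a_n \le \rho^n a_0 + b_1 \sum_{k=0}^{n-1} \rho^{n-1-k} q^k$ and evaluate the geometric sum via the identity $\sum_{k=0}^{n-1} \rho^{n-1-k} q^k = \dfrac{q^n - \rho^n}{q - \rho}$, then bound $q^n - \rho^n \le q^n$; this reproduces the same constant $D$ and hence the same bound. The only point requiring genuine care, rather than routine algebra, is the positivity of $q - \rho$ (equivalently of $1 - (1-\epsilon)e^{b_2}$): this is precisely where the hypothesis ``$h$ sufficiently small'' enters, and it is needed both for $D$ to be finite and positive and for $q^n$ to dominate the geometric sum. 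I expect this smallness constraint on $h$ to be the only substantive obstacle, and it is already guaranteed by the standing assumption in the corollary.
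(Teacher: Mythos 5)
Your proposal is correct and takes essentially the same approach as the paper: your ``alternative'' route (unroll \eqref{D-19}, evaluate the geometric sum $\sum_{k=0}^{n-1}(1-\epsilon)^{k}e^{-b_2(n-1-k)}$ in closed form, and drop the $(1-\epsilon)^n$ term using that $(1-\epsilon)e^{b_2}-1<0$ for small $h$) is verbatim the paper's computation, and your primary induction with the ansatz $\rho^n a_0 + Dq^n$ is an equivalent repackaging of it. Your direct check that $1-(1-\epsilon)e^{b_2}>0$ for small $h$ (via $b_2(h)\to 0$) is the same smallness condition the paper invokes by noting $\lim_{h\to 0} b_1(\epsilon,h)e^{b_2(h)}/\bigl((1-\epsilon)e^{b_2(h)}-1\bigr) = -\sqrt{\bar c_0/\epsilon}<0$.
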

\begin{proof}
We use the mathematical induction on \eqref{D-19} to get
\begin{align*}
	\|\Delta^v(n) - \Delta^{\overline{v}}(n)\|_F&\leq (1-\epsilon)^{n}\|\Delta^v(0) - \Delta^{\overline{v}}(0)\|_F + b_1\sum_{k=0}^{n-1}(1-\epsilon)^ke^{-b_2(n-1-k)}\\
	& = (1-\epsilon)^n\|\Delta^v(0) - \Delta^{\overline{v}}(0)\|_F + b_1e^{-b_2(n-1)}\frac{\left((1-\epsilon)e^{b_2}\right)^{n}-1}{(1-\epsilon)e^{b_2}-1}\\
        & = (1-\epsilon)^n\|\Delta^v(0) - \Delta^{\overline{v}}(0)\|_F + \frac{b_1e^{b_2}}{(1-\epsilon)e^{b_2}-1}\left((1-\epsilon)^n-e^{-b_2n}\right)\\
        &\leq (1-\epsilon)^n\|\Delta^v(0) - \Delta^{\overline{v}}(0)\|_F + \frac{b_1e^{b_2}}{1-(1-\epsilon)e^{b_2}}e^{-b_2n}\,.
        \end{align*}
Here, we used
\begin{equation*}
    \frac{b_1e^{b_2}}{(1-\epsilon)e^{b_2}-1} < 0\,
\end{equation*}
for small enough $h>0$ since
\begin{equation*}
    \lim_{h\to0}\frac{b_1(\epsilon,h)e^{b_2(h)}}{(1-\epsilon)e^{b_2(h)}-1} = -\sqrt{\frac{\bar c_0}{\epsilon}} < 0\,.
\end{equation*}

\end{proof}

\begin{remark}\label{rmk:orbital-stability}
    In \cref{thm: stability}, we showed that the shape discrepancy of two pairs of solutions remains uniformly bounded by the shape discrepancy between two sets of initial data. The conditions $\mathcal{F}1$ and $\mathcal{F}2$ for this result are imposed to guarantee the formation of mono-cluster flocking. We can view our stability result as a kind of orbital stability of solution structure. To see this, we recall the analogous result of viscous conservation laws. In viscous conservation laws, the speed of the shock structure (traveling wave connecting two constant far-field states) is determined by the Rankine-Hugoniot condition. Moreover, the perturbed shock structure converges to the shifted original shock structure with the same shape. This phenomenon is called the orbital stability of the shock structure and the shift is also determined by the mass of a perturbation via the conservation law. In our case, the MT model does not have momentum conservation, so we cannot know the asymptotic velocity a priori. Nevertheless, our uniform stability result illustrates that the differences in the shape of the configurations can be uniformly controlled by that of the initial data. This is why we can interpret our uniform stability results as a kind of orbital stability of solution structure. 
\end{remark}

\section{Numerical Simulations} \label{sec:5}
\setcounter{equation}{0}
In this section, we numerically illustrate and substantiate the theoretical results in \cref{thm: flocking} and \cref{cor: v_fluctuation_bound}.  For the communication function $a$ we consider a typical choice from the Cucker-Smale literature that satisfies all the conditions in \cref{as: a}: For some positive constant $\beta>0$, and $c_2 > c_1 > 0$, 
\begin{equation}\label{eq:a-Sec5}
    a(s) = c_1 + \frac{c_2 - c_1}{ (1 +s^2)^{\beta}}\,.
\end{equation}
Unlike the results for the continuous-time model in \cite{MT14}, where unconditional flocking is guaranteed, our flocking result imposes certain assumptions on the initial configuration and achieves therefore only \emph{conditional flocking}. We recall that these conditions are given by \eqref{eq: initial}:
\begin{align*}
    &\|\Delta^x(0)\|_F < M:=\frac{1}{4N\phiLip}\,, \qquad
    \|\Delta^v(0)\|_F < \int_{\|\Delta^x(0)\|_F}^M \kappa \psi(s) ds\,.
\end{align*} 
Since $N\phiLip = \frac{L_{a} (1 + c_2/c_1)}{c_1}$ is independent of $N$, the value is affected by $L_{a}, c_1, c_2 > 0$. It is easy to observe that $L_{a}$ decreases (increases) as the exponent $\beta$ decreases (increases). This will make $M$ larger (smaller). Hence, $M$ cannot be made arbitrarily large, but choosing small $\beta > 0$ allows us to consider less restricted initial particle configurations. An explicit calculation leads to 
\begin{align*}
    &a'(s) = -2s\beta(c_2 - c_1) (1 + s^2)^{-\beta - 1}\,,\\
    &a''(s) = 2(c_2 - c_1) \beta (1 + s^2)^{-\beta -1}\{2(\beta + 1) (1+ s^2)^{-1}s^2 - 1 \}\,.
\end{align*}
Since $a''(s) = 0$ at $ s^*= \sqrt{\frac{1}{2\beta + 1}}$ and $a''(s)<0$ for $0\le s<s^*$ and $a''(s)<0$ for $s>s^*$,  the function $a':[0,\infty)\to (-\infty,0]$ has a minimum at $s^*$. Hence, $|a'|$ has a maximum at $s^*$, and so for all $s\ge 0$,
\begin{align*}
    \left|a'(s)\right|\leq \left|a'(s^*)\right| = \frac{2\beta(c_2 - c_1) }{\sqrt{2\beta + 1}}\left(1 + \frac{1}{2\beta + 1}\right)^{-\beta - 1} =: L_{a}\,.
\end{align*}
Hence, we get \begin{align*}
    M = \frac{1}{4N\phiLip} = \frac{c_1}{4L_{a} (1 + c_2/c_1)} = \frac{c_1\sqrt{2\beta + 1}}{8\beta(c_2 - c_1) (1 + c_2/c_1)} \left(1 + \frac{1}{2\beta + 1}\right)^{\beta +1}\,.
\end{align*}
\begin{figure}[ht!]
    \centering
\includegraphics[width = 0.7\linewidth]{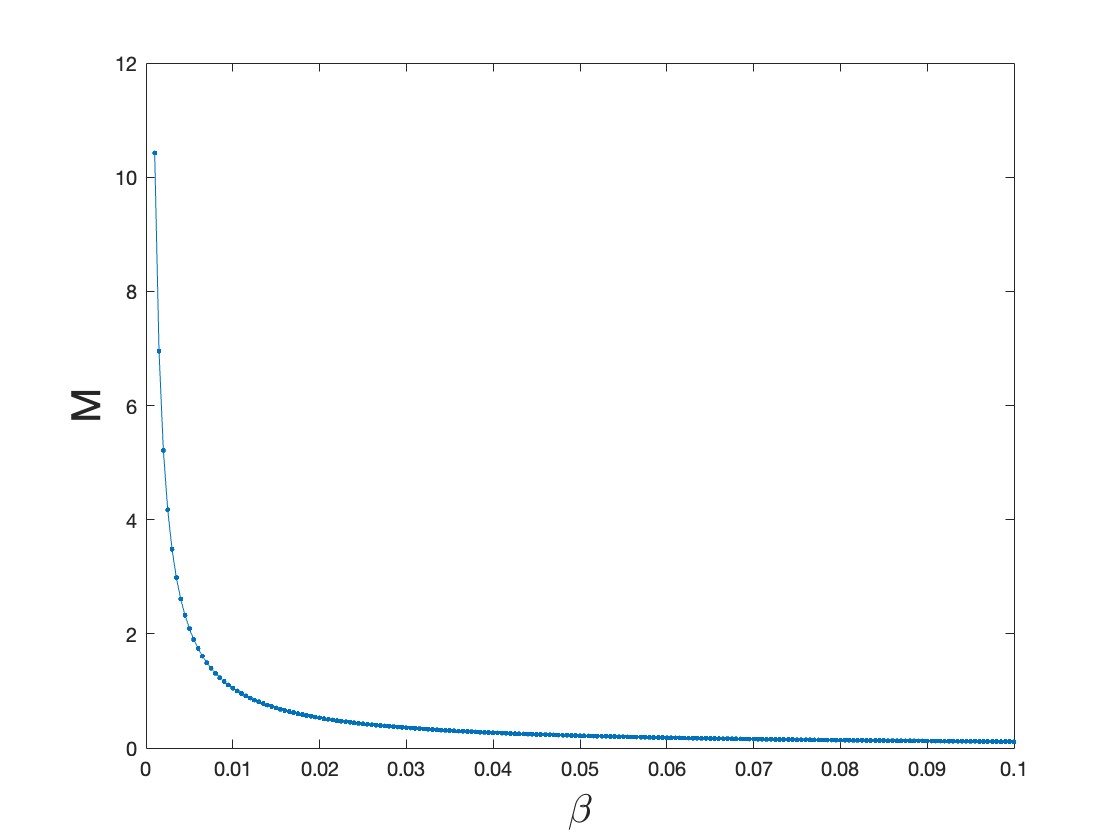}
    \caption{Bound $M$ as a function of $\beta$ for the communication weight $a$ as defined in \eqref{eq:a-Sec5} with $c_1=0.1$ and $c_2=0.5$.}
    \label{fig:M_beta}
\end{figure}
Figure~\ref{fig:M_beta} shows $M$ as a function of $\beta$ with $c_1 = 0.1, c_2 = 0.5$.
To test the robustness of our analysis, we consider a wide range of exponents $0 < \beta < 1$.
For the Cucker-Smale model,  $\beta = 1/2$ is a critical exponent for the transition between conditional and unconditional flocking \cite{H-Liu}. Although in the case of the MT model, \cref{thm: conti_flocking} and \cref{thm: flocking} are conditional flocking results for any choice of $\beta \geq 0$, setting $\beta$ closer to 1 imposes a more restrictive initial condition to achieve flocking. This is consistent with the original Cucker-Smale model, where setting $ \beta > 1/2$ leads to conditional flocking, whereas $\beta \in [0, 1/2]$ corresponds to unconditional flocking.
To set the initial values of $(X, V)$, we randomly sample either from a uniform distribution or from a normal distribution truncated to be supported on a fixed interval. Throughout this section, we fixed $\kappa = 1, h = 0.01, c_1 =0.1, c_2 = 0.5$ and $d=2$ or $d=4$ as changing the communication strength, step size, $c_1, c_2$ values, and dimension of the space do not affect the trend of the dynamics much. 
As long as particles are initialized conforming to the conditions \eqref{eq: initial}, our numerical simulations exhibit the decay and boundedness behavior predicted by \cref{thm: flocking}.
%
%%%%%%%% conditions satisfied
Figures~\ref{fig:deviation_allbetas} and \ref{fig:rate_b} show the deviation of positions and velocities of particles $\|\Delta^x\|_F, \|\Delta^v\|_F$, respectively, for different values of $\beta$ ($10^{-1}, 10^{-2}$ and $10^{-3}$). 
The numerical solution to \eqref{eq: discrete_MT} is shown as a fat line, and the theoretical upper bound $M$ from \cref{thm: flocking} is shown as a dotted line. 
\begin{figure}[ht!]
    \hspace*{-0.5cm}
    \includegraphics[width=0.5\linewidth]{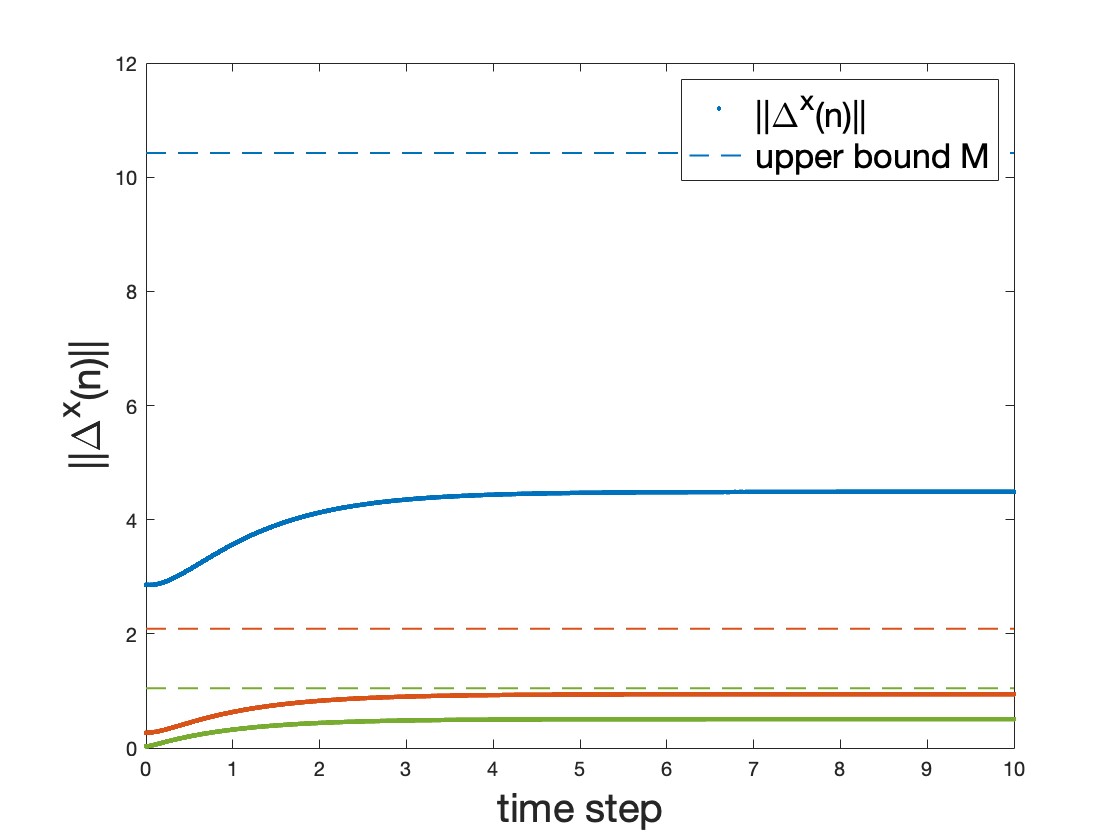}
    \includegraphics[width=0.5\linewidth]{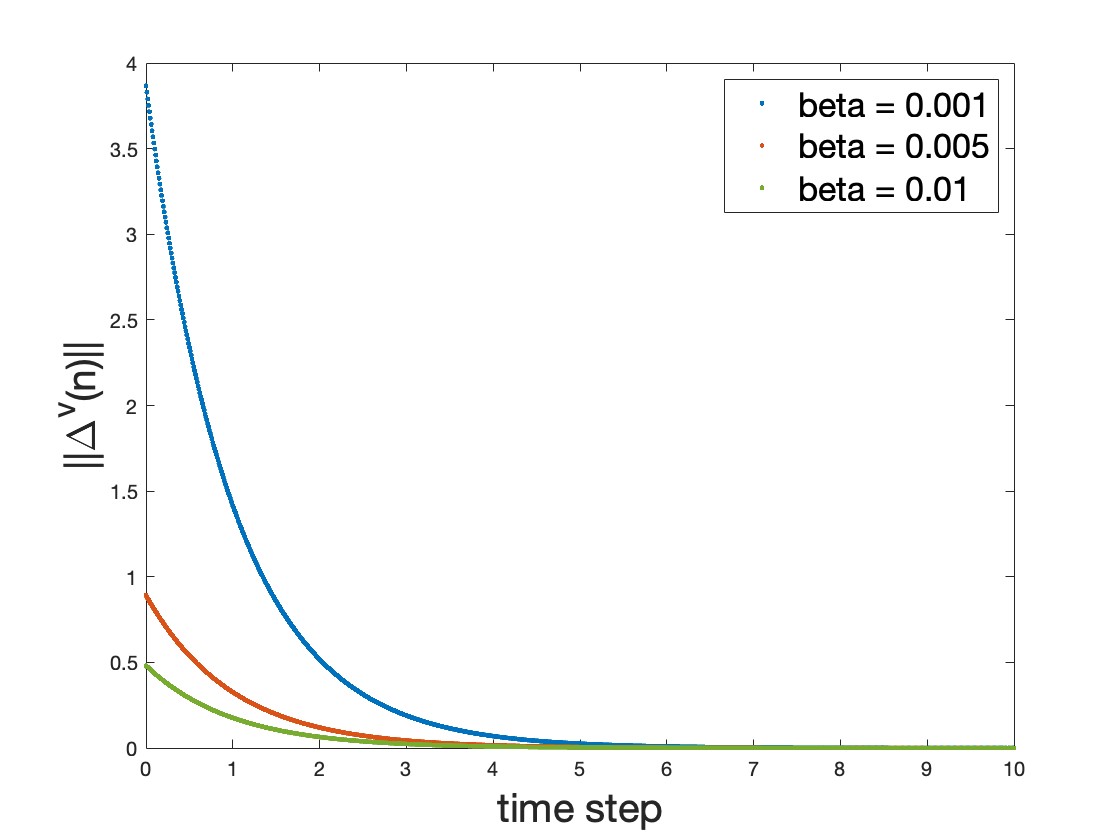}
    \captionof{figure}{Deviation of position (left) and velocity (right) for  $\beta = 0.001, 0.005, 0.01$ and $d=4$. The behavior is as predicted in \cref{thm: flocking}.}
    \label{fig:deviation_allbetas}
\end{figure}
\begin{figure}[ht!]
    \hspace*{-0.5cm}
    \includegraphics[width=0.6\linewidth]{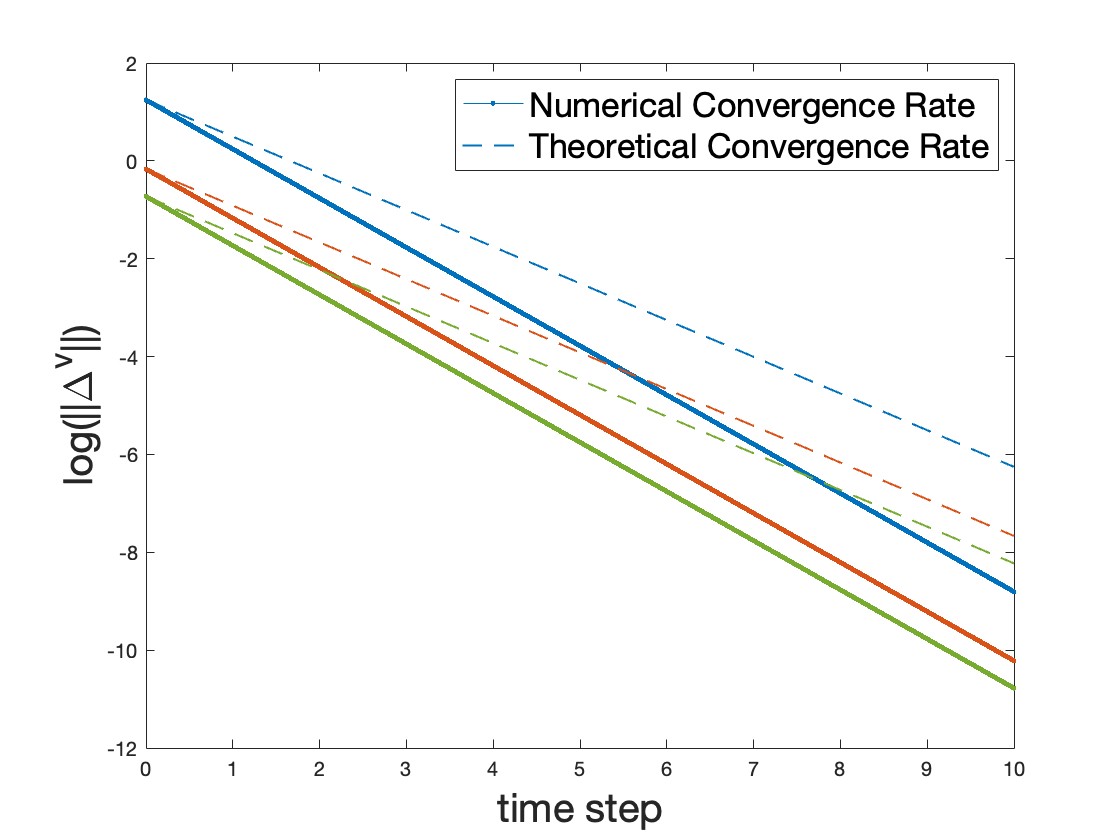}
    \captionof{figure}{Convergence of $\log(\|\Delta^v (n)\|_F)$ for $\beta = 0.001, 0.005, 0.01$ and $d=4$.}
    \label{fig:rate_b}
\end{figure}

From \eqref{eq: discrete_v_flocking}, the decay for $\|\Delta^v\|_F$ is exponential with rate close to $-\kappa \psi(M)$. As can be seen in Figure~\ref{fig:rate_b}, for $(X(n), V(n))$ solving \eqref{eq: discrete_MT} the slope of $\|\Delta^v(n)\|_F$ may be steeper in practice than that of the theoretical prediction. A similar conclusion can be made regarding the analysis for $\|\Delta^x(n)\|_F$, and it is, in fact, the non-optimality of the bound in position fluctuations via $M$ that is the source of information loss for $\|\Delta^v(n)\|_F$. Setting $M$ smaller than the predicted value in \cref{thm: flocking} choosing for example $\frac{1}{\tilde{C}N\aLip}$ with $\tilde{C} > 4$ can enhance the decay rate, but then also imposes stronger conditions on the initial configurations. This illustrates a trade-off between faster theoretical convergence rates and more restrictive initialization.

%%%%%% conditions not satisfied
However, flocking may still occur even if the conditions of \cref{thm: flocking} are not satisfied. For instance, Figure~\ref{fig:dynamics_b = 0.001} demonstrates the emergence of flocking for $\beta=0.001$ for randomly sampled initial positions and velocities. The initial positions were sampled from a normal distribution with mean and variance tuned to violate the condition \eqref{eq: initial}. For such choice of initial data, even though the solution $\left(X(n), V(n)\right)_{n\in \N}$ iteratively solving \eqref{eq: discrete_MT} 
does not satisfy the uniform upper bound $M$ in position fluctuations $\|\Delta^x(n)\|_F$, the fluctuations in velocities $\|\Delta^v(n)\|_F$ are still seen to exhibit exponential decay; see Figure~\ref{fig:flocking_b = 0.001}. In the left figure of Figure~\ref{fig:flocking_b = 0.001}, we can check that the position deviation $\|\Delta^x(n)\|_F$ is not uniformly bounded by $M$ necessarily, but by some larger constant. This shows that our theoretical estimates to guarantee flocking are rather coarse. 
\begin{figure}[ht!]
    % \centering
\includegraphics[width = 0.3\linewidth]{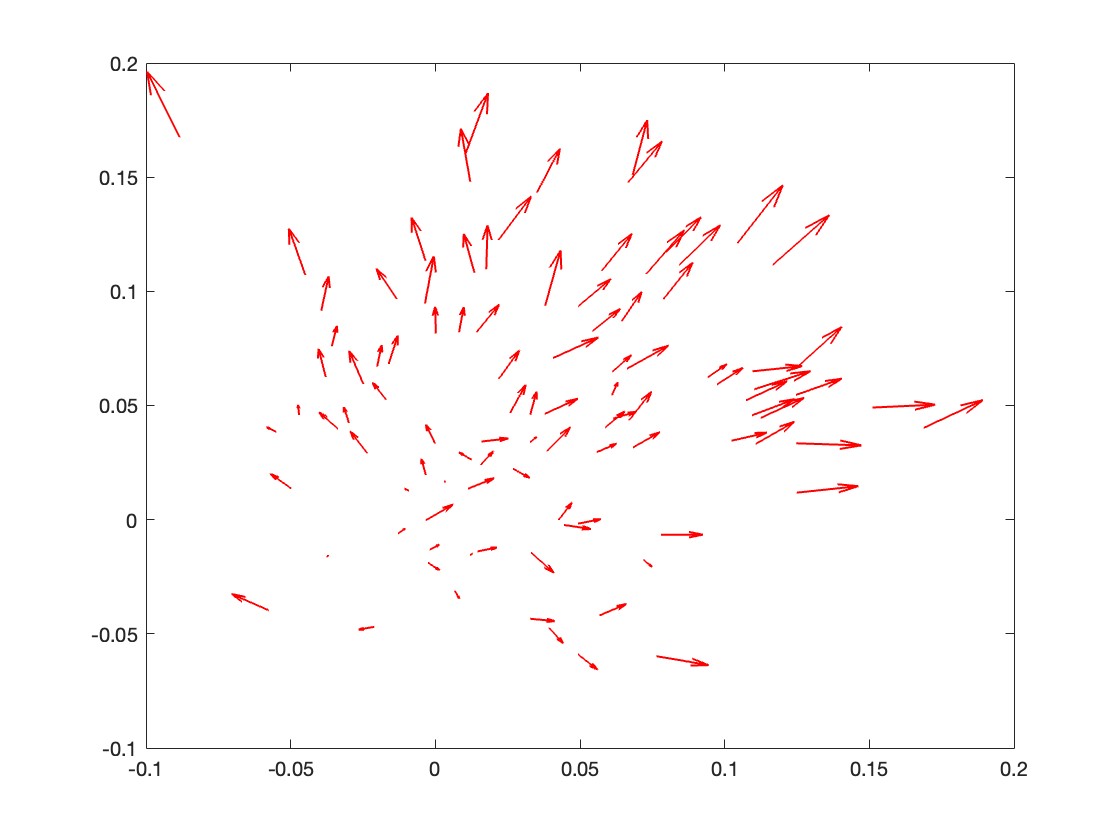}
\includegraphics[width = 0.3\linewidth]{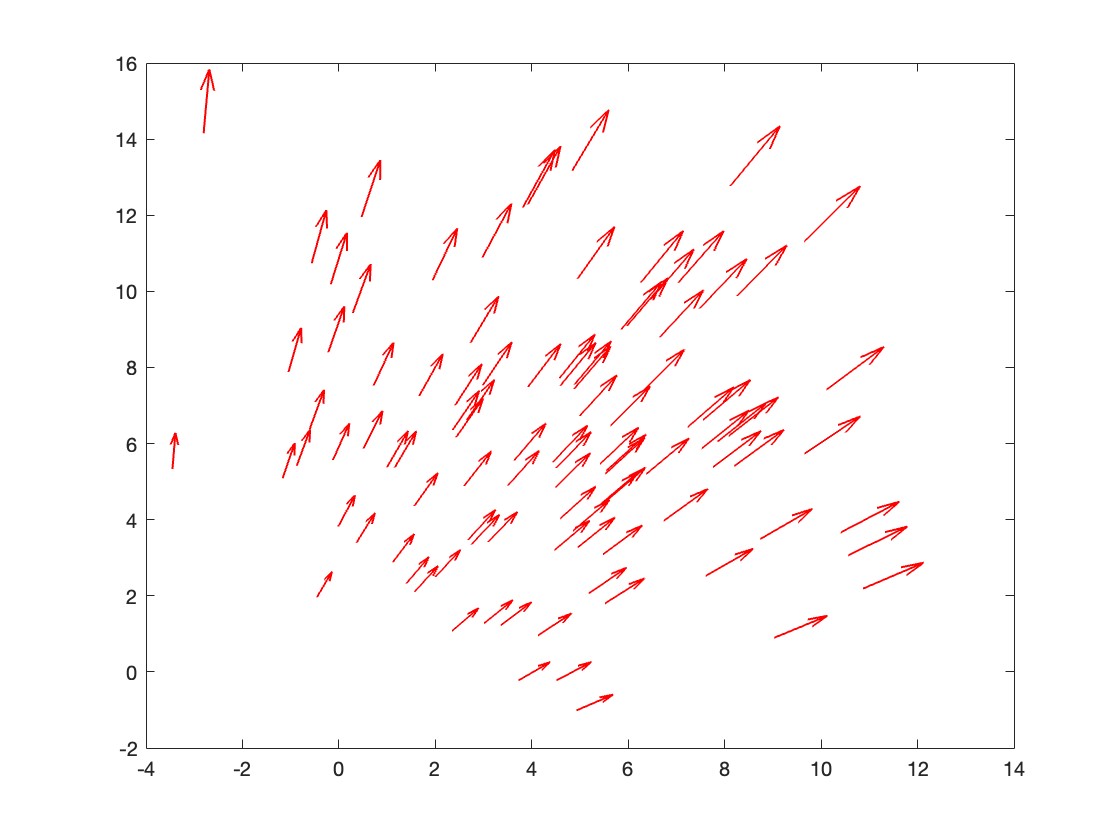}
\includegraphics[width = 0.3\linewidth]{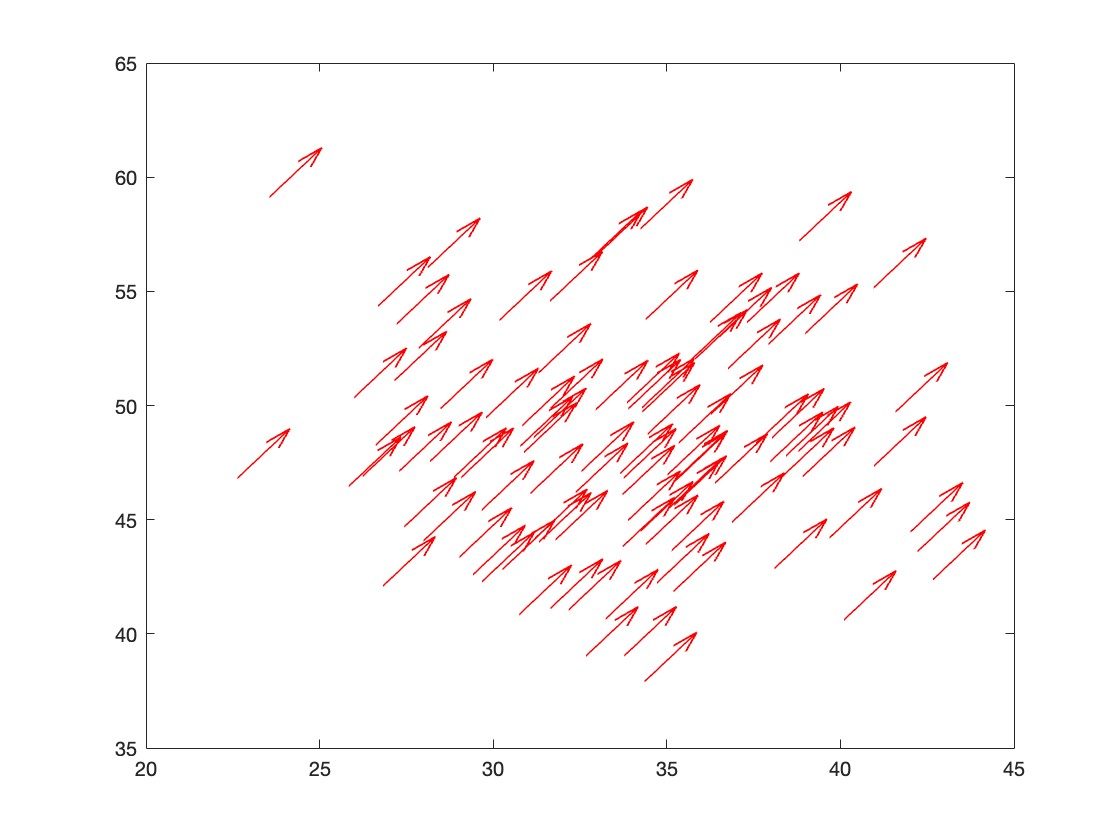}
    \caption{ Distributions of particles and their velocity vectors at time $t= 0, 1.25, 10$ for $\beta = 0.001$ and $d=2$ for a setting not satisfying \eqref{eq: initial}.}
    \label{fig:dynamics_b = 0.001}
\end{figure}
\begin{figure}[ht!]
    \hspace*{-0.5cm}
\includegraphics[width = 0.5\linewidth]{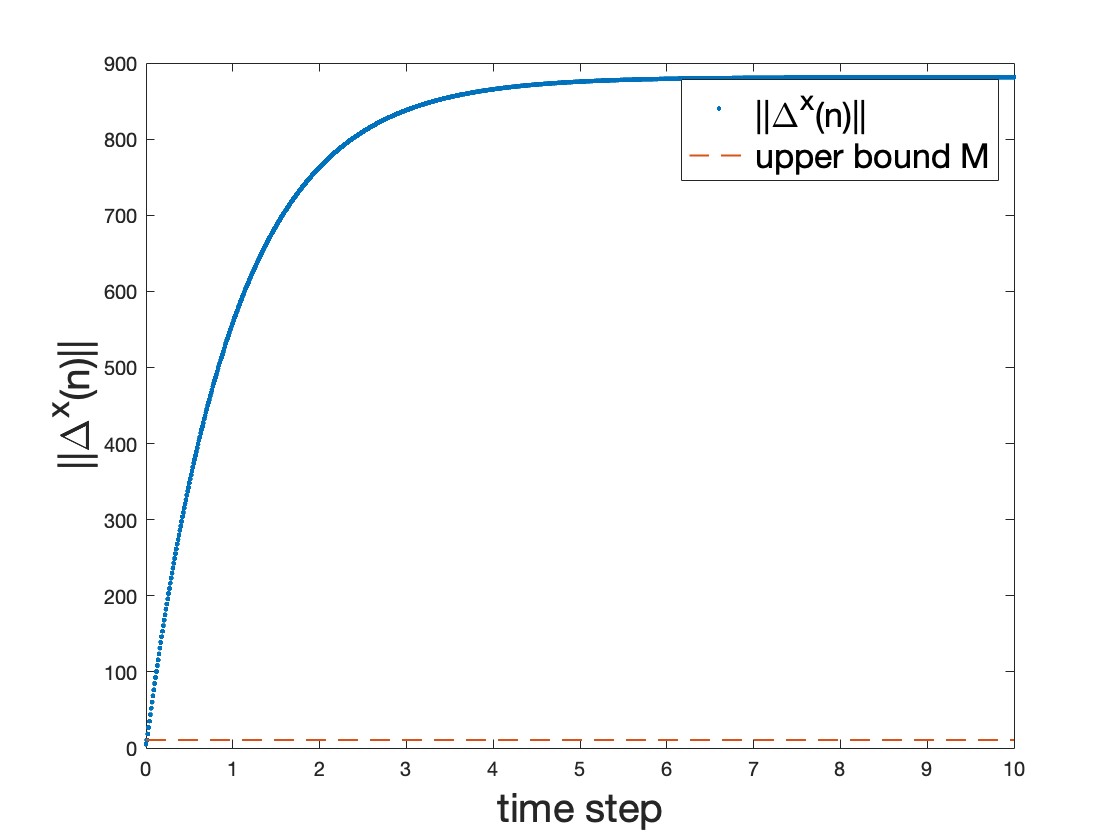}
\includegraphics[width = 0.5\linewidth]{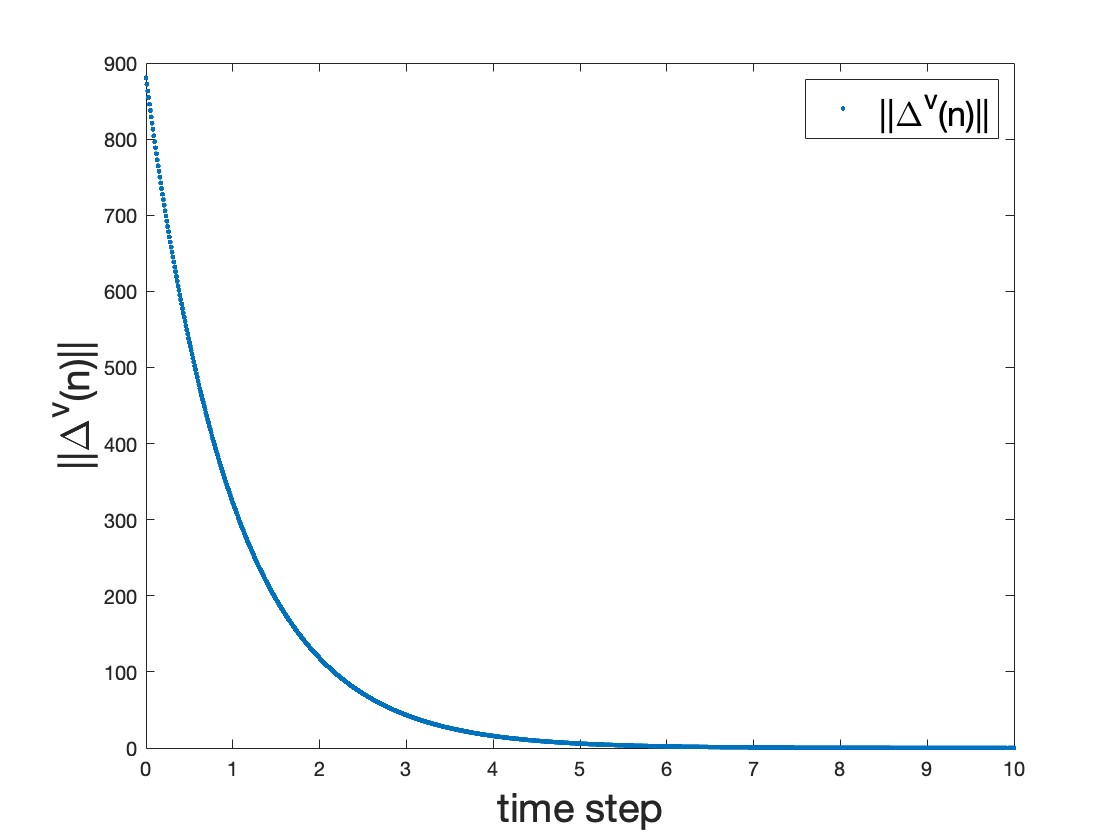}
    \caption{$\|\Delta^x(n)\|_F$ (left) and $\|\Delta^v(n)\|_F$ (right) corresponding to the particle evolutions from Figure~\ref{fig:dynamics_b = 0.001} for $\beta = 0.001$ and $d=2$. Flocking emerges even though the conditions of \cref{thm: flocking} are not satisfied.}
    \label{fig:flocking_b = 0.001}
\end{figure}
See Figure~\ref{fig:dynamics_b= 0.1} and \ref{fig:flocking_b = 0.1} for a similar behavior in the case $\beta=0.1$.

\begin{figure}[ht!]
    % \centering
\includegraphics[width = 0.3\linewidth]{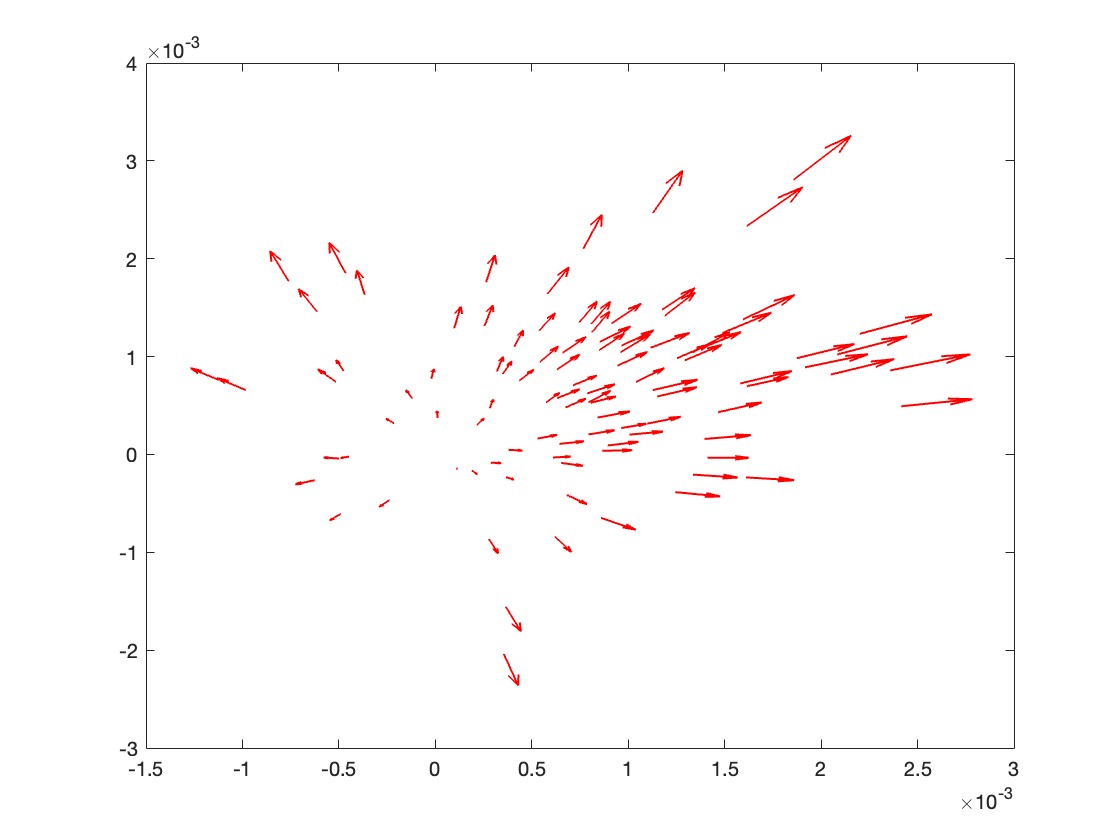}
\includegraphics[width = 0.3\linewidth]{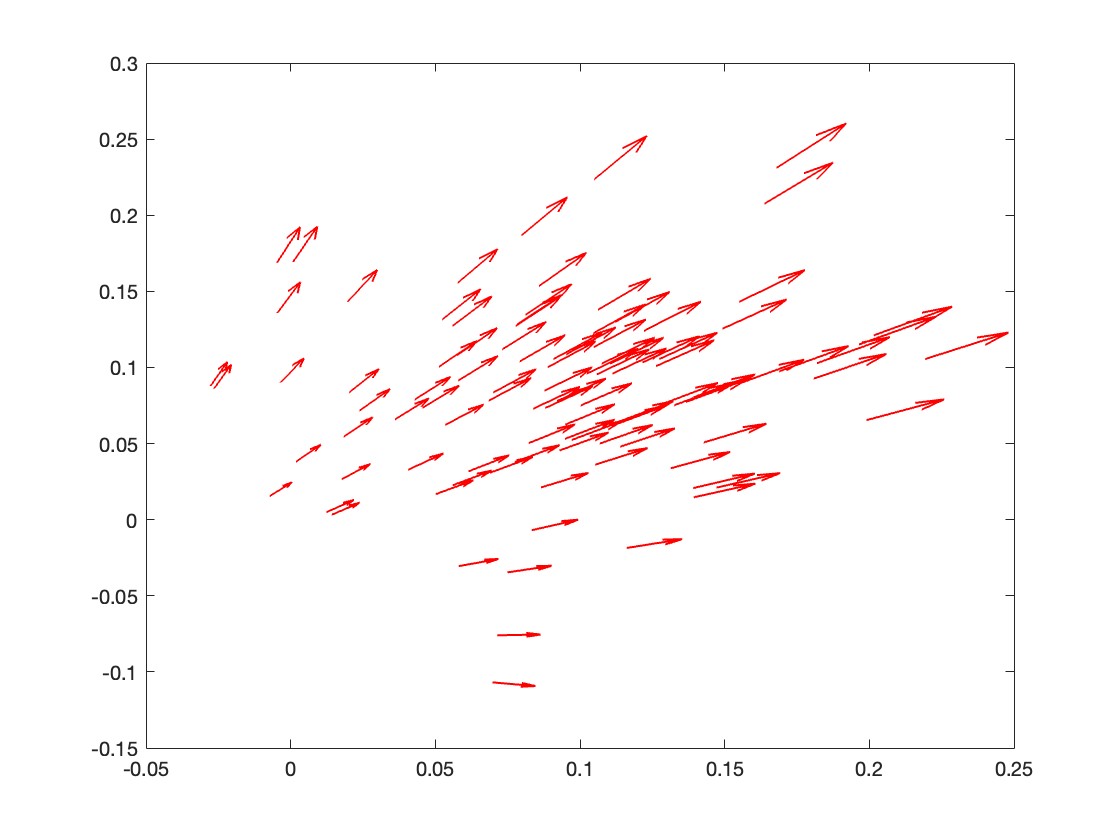}
\includegraphics[width = 0.3\linewidth]{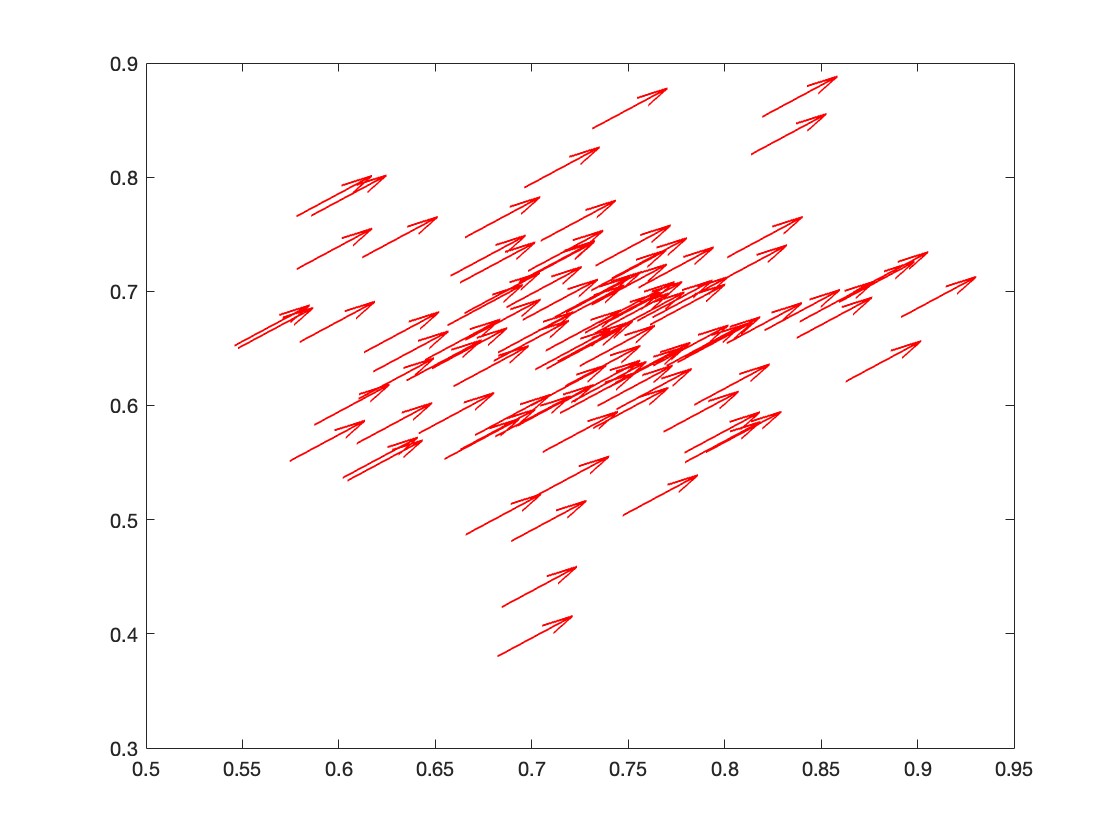}
    \caption{ Distributions of particles and their vectors at time $t= 0, 1.25, 10$ for $\beta = 0.1$}
    \label{fig:dynamics_b= 0.1}
\end{figure}
\begin{figure}[ht!]
    \hspace*{-0.5cm}
\includegraphics[width = 0.5\linewidth]{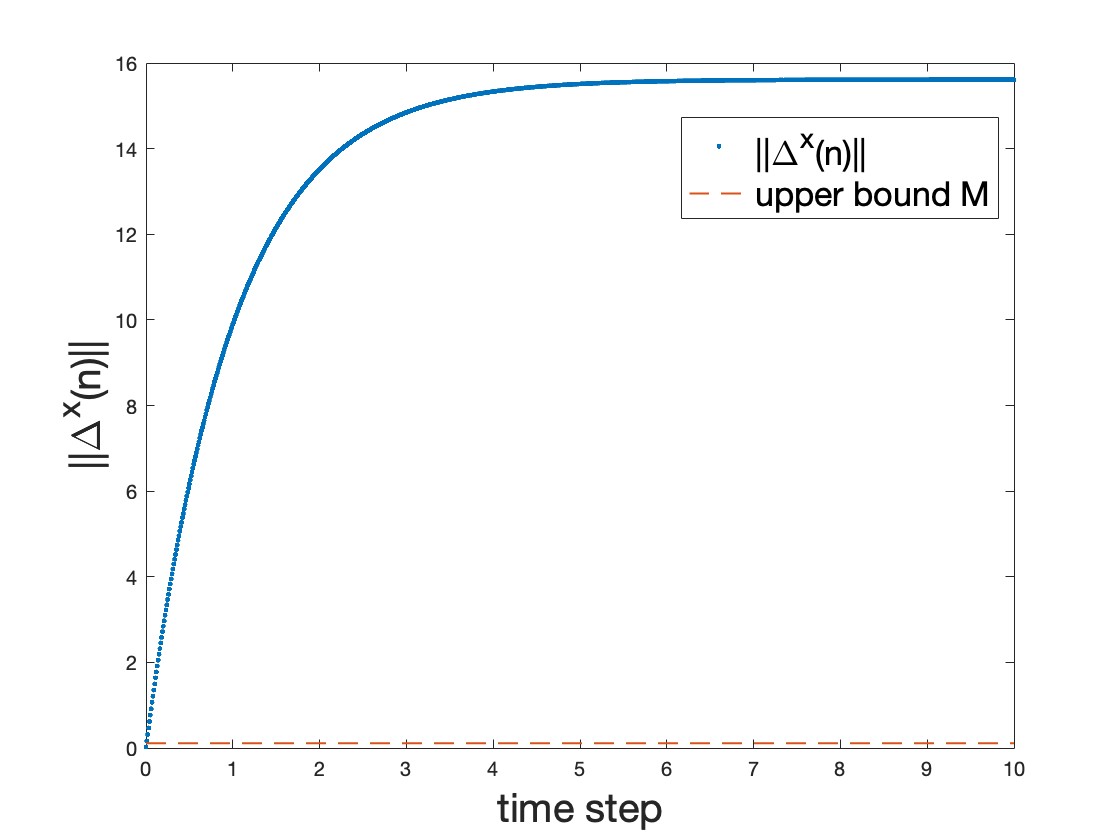}
\includegraphics[width = 0.5\linewidth]{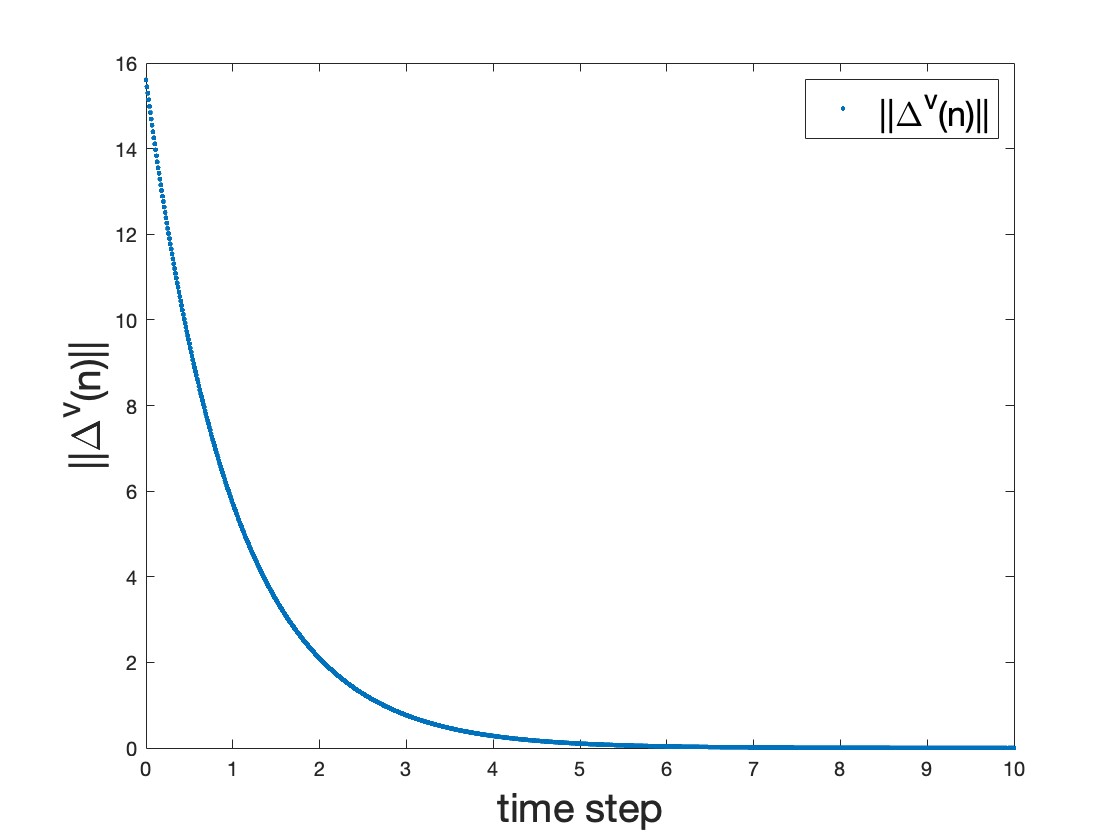}
    \caption{$\|\Delta^x(n)\|_F$ (left) and $\|\Delta^v(n)\|_F$ (right) corresponding to the particle evolutions from Figure~\ref{fig:dynamics_b= 0.1} for $\beta = 0.1$. Flocking emerges even though the conditions of \cref{thm: flocking} are not satisfied.}
    \label{fig:flocking_b = 0.1}
\end{figure}
To see if our theoretical threshold $M$ for the deviations in positions corresponds to a physical phase transition, we tested for emergence of flocking with initial deviations in position closely below and above $M$; see Figure~\ref{fig:deviation_close} for $0.9 \cdot M \le \|\Delta^x(0)\|_F \le 0.95\cdot M$, and Figure~\ref{fig:deviation_close_violated} for $1.05 \cdot M \le \|\Delta^x(0)\|_F \le 1.1 \cdot M$. No sharp transition occurs, and in all cases, we observe flocking eventually.
This indicates that our estimates in \cref{thm: flocking} are likely not sharp. 
\begin{figure}[ht!]
    \hspace*{-0.5cm}
    \includegraphics[width=0.5\linewidth]{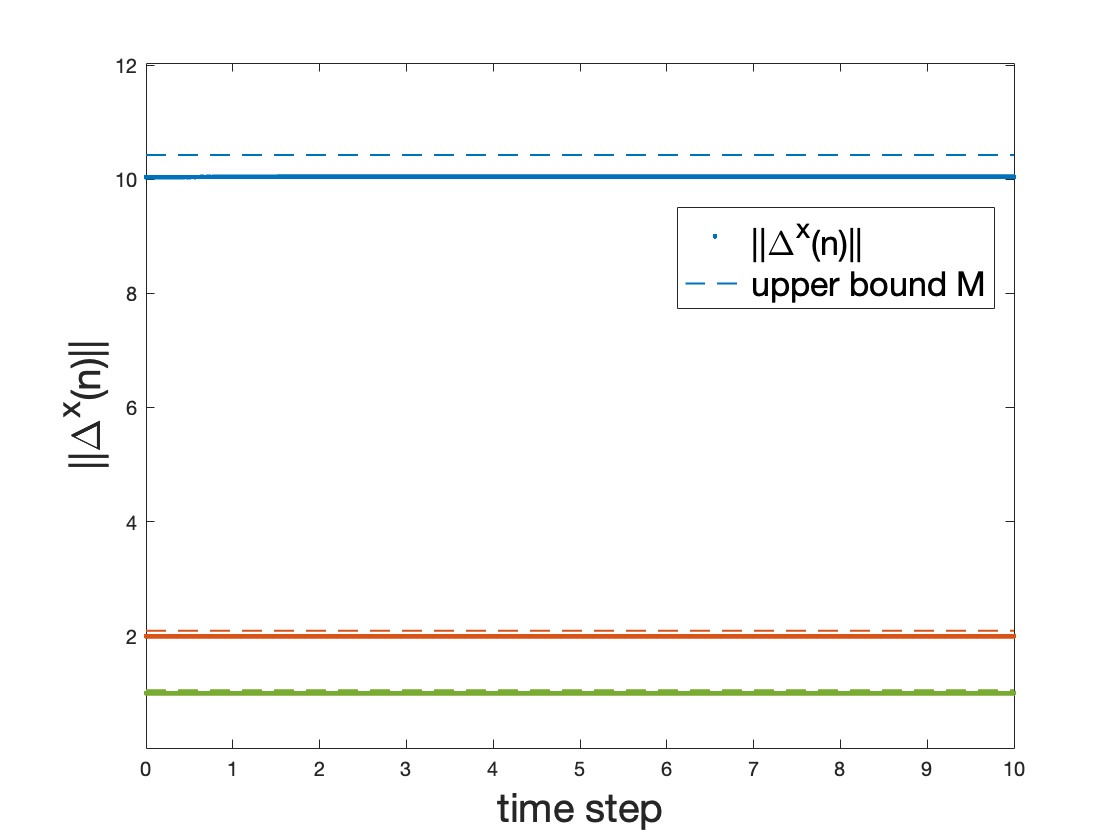}
    \includegraphics[width=0.5\linewidth]{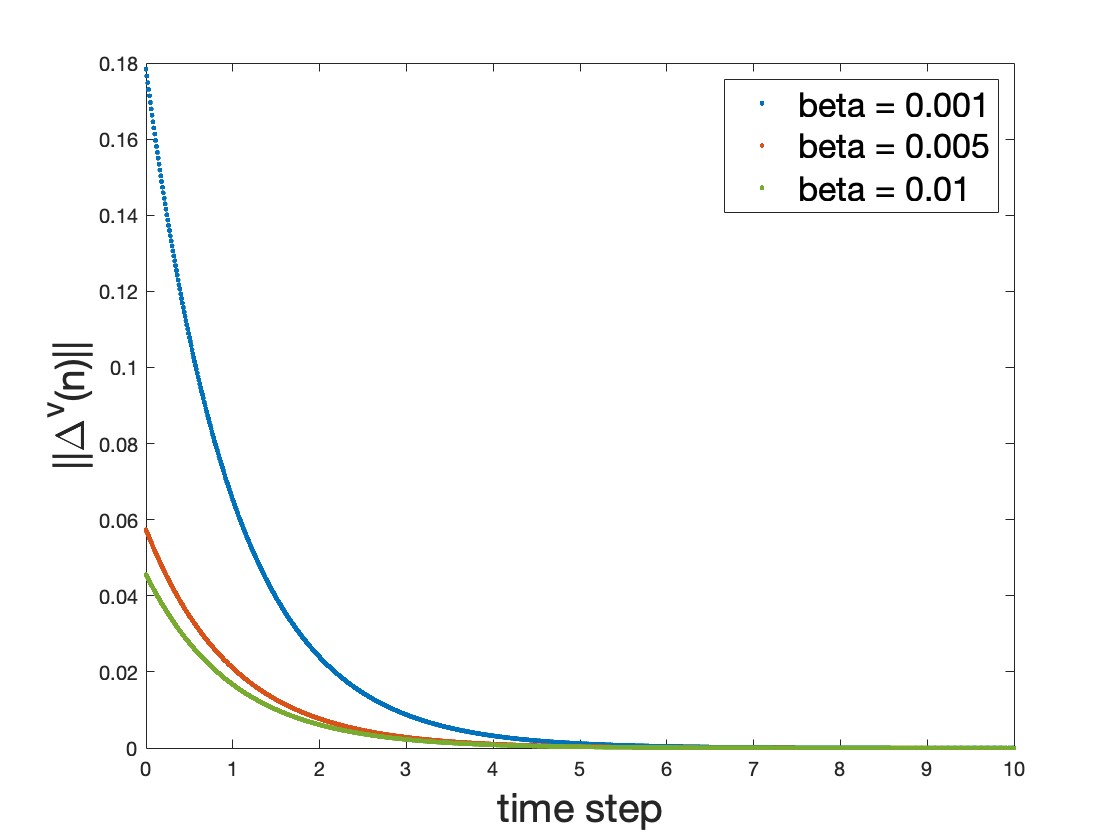}
    \captionof{figure}{Deviation of position (left) and velocity (right) when $\|\Delta^x(0)\|_F$ is between $ 0.9\cdot M$ and $0.95\cdot M$; ($\beta = 0.001, 0.005, 0.01$) with ($M = 0.9606, 0.9352, 0.9032$).}
    \label{fig:deviation_close}
\end{figure}

\begin{figure}[ht!]
    \hspace*{-0.5cm}
    \includegraphics[width=0.5\linewidth]{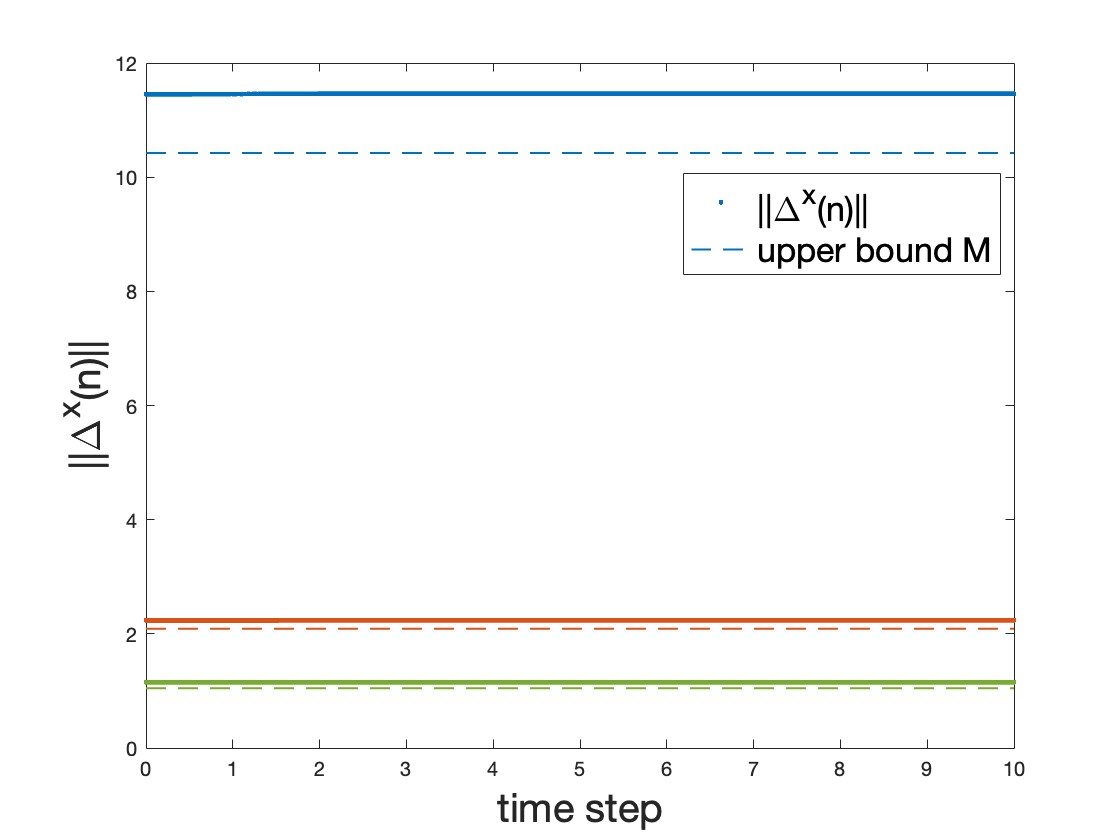}
    \includegraphics[width=0.5\linewidth]{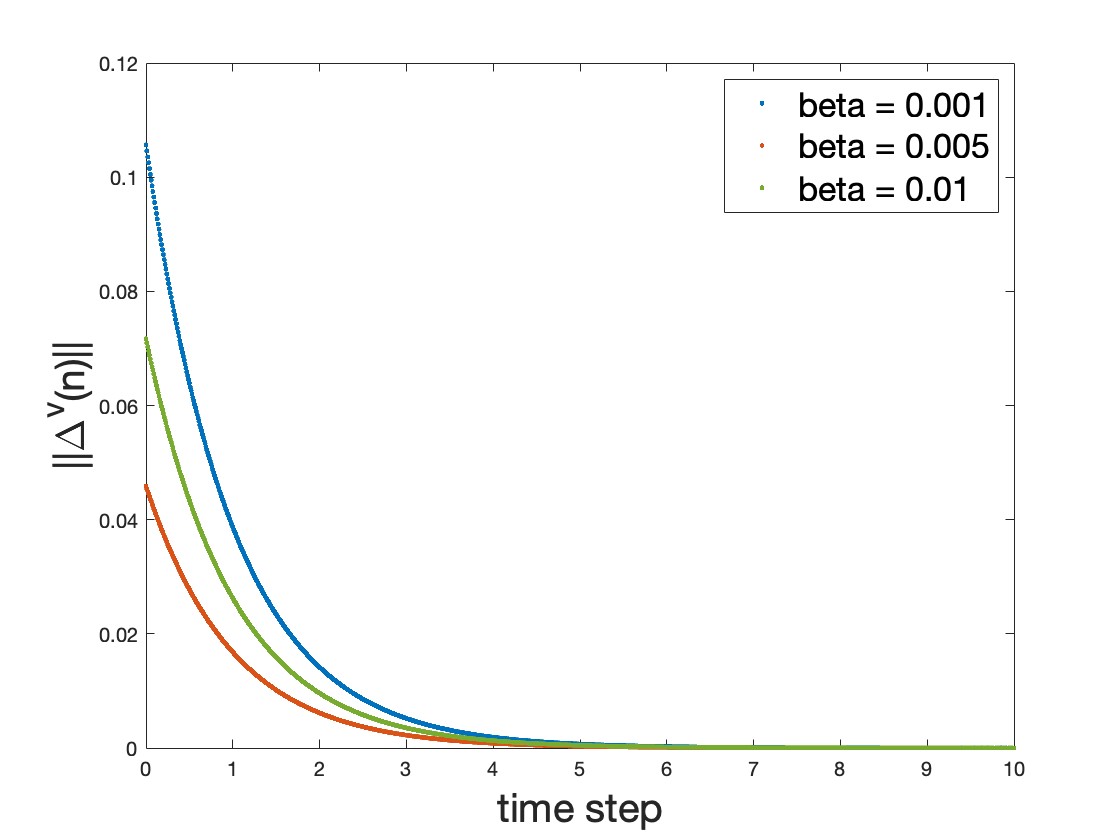}
    \captionof{figure}{Deviation of position (left) and velocity (right) when $\|\Delta^x(0)\|_F$ is between $1.05\cdot M$ and $1.1 \cdot M$; ($\beta = 0.001, 0.005, 0.01$) with $M=(1.0684, 1.100, 1.0791)$.}
    \label{fig:deviation_close_violated}
\end{figure}
\pagebreak
\newpage

Next, we numerically investigate the $l_2$ stability result \cref{thm: stability} under the same setting as the experiments for the flocking estimate. The numerical results (see Figure~\ref{fig:Delta_allbetas}) are consistent with our theoretical analysis: 
The shape discrepancy between two pairs of solutions $(X, V)$, and $(\overline{X}, \overline{V})$ are controlled uniformly in time by its initial value. 

\begin{figure}[ht!]
    \hspace*{-0.5cm}
    % \minipage
    \includegraphics[width=0.7\linewidth]{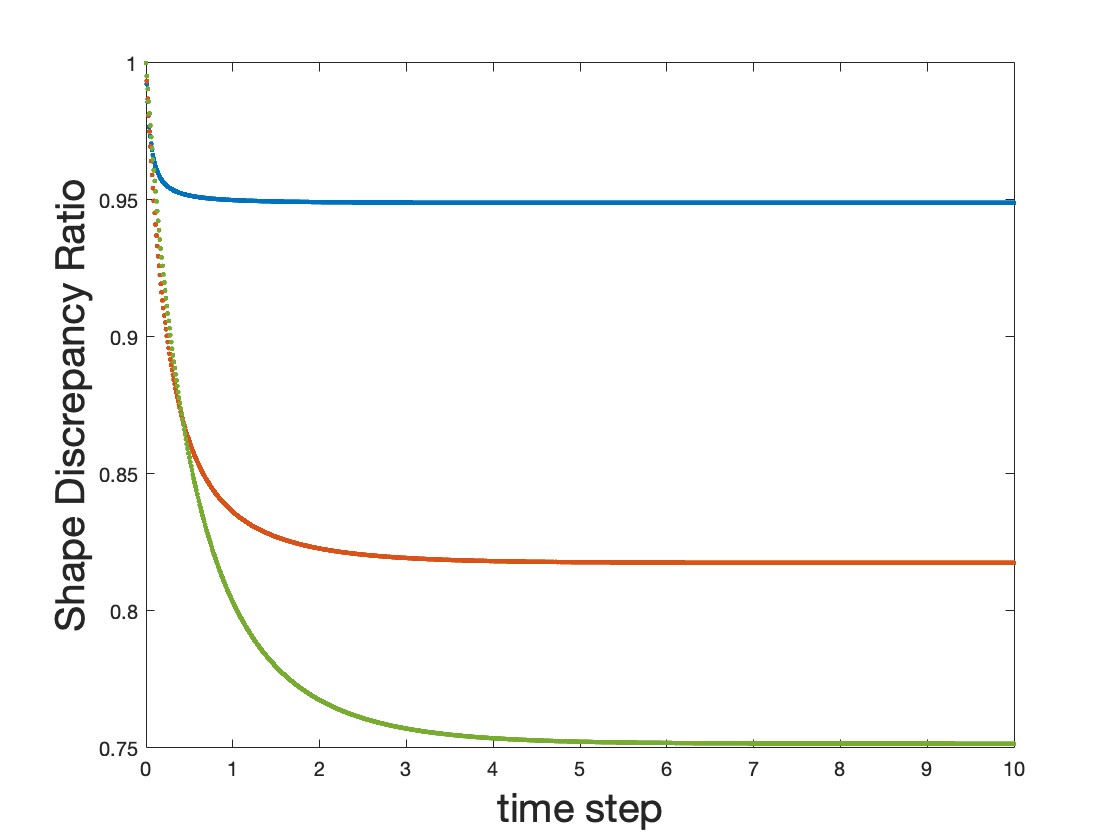}
    \captionof{figure}{Shape Discrepancy Ratio $ \frac{\|\Delta^x(n) - \Delta^{\overline{x}}(n)\|_F + \|\Delta^v(n) - \Delta^{\overline{v}}(n)\|_F}{\|\Delta^x(0) - \Delta^{\overline{x}}(0)\|_F + \|\Delta^v(0) - \Delta^{\overline{v}}(0)\|_F}$ for ($\beta = 0.001, 0.005, 0.01$).}
    \label{fig:Delta_allbetas}
\end{figure}
\pagebreak
\newpage

\section{Conclusion} \label{sec:6}
\setcounter{equation}{0} 
In this paper, we have addressed three quantitative estimates for the discrete MT model, corresponding to a forward Euler discretization of the continuous MT model. Compared to the CS model in which the forcing term is skew-symmetric in the index exchange map, the flocking force in the MT model does not have this skew-symmetric property. Hence, the total momentum for the MT model is not conserved. This lack of conservation law poses technical challenges for the flocking and stability analysis. For example, the standard energy estimates cannot be used for the continuous MT model as it is. 

Fortunately, however, the nonlinear functional approach introduced in Ha-Liu \cite{H-Liu} can be applied with the spatial and velocity diameters as suitable nonlinear functionals. In \cite{MT14}, such a nonlinear functional approach was adapted to prove the flocking result for the continuous MT model. 

For the discrete MT model, we provide the following three quantitative estimates. First, we show that the discrete model can exhibit asymptotic flocking under a suitable framework in terms of initial data and system parameters. Second, we show that the discrepancy between the discrete and continuous solutions tends to zero uniformly in time. For this uniform transition from the discrete model to the continuous model, we used the standard finite-time transition result as well as the asymptotic flocking estimates for both the discrete and the continuum models. 
Since the flocking estimate on the discrete level is a conditional result, the uniform-in-time transition relies on the same constraints for the initial data. In contrast, the finite-time transition can be shown for a general class of initial data. Thus, there is a trade-off between using a uniform-in-time transition with a limited choice of admissible initial data and a finite-time transition with a limited time horizon. Finally, we show that the difference in shape discrepancy between two solutions measured in the $\ell^2$-distance can be controlled by their initial value. 

Our numerical investigations in Section~\ref{sec:5} suggest that the conditions for flocking to occur may be weakened. Noting that unconditional flocking for a slowly decaying communication function $\phi$ was obtained for the continuous model \cite{MT14} in Proposition 2.9, it is a natural next step to seek an unconditional flocking result in the discrete setting as well.

Here, we have chosen an explicit first-order Euler discretization scheme as a starting point for developing the methodology for uniform-in-time flocking and stability. Other discretizations are of interest such as implicit schemes and higher-order schemes; a fourth-order Runge-Kutta scheme for instance is practically desirable from an implementation perspective. This work opens up avenues to develop a uniform-in-time theory for much more general classes of numerical algorithms.

\appendix
% \newpage

\section{Reindexing estimate}\label{sec:App-A}
\setcounter{equation}{0}
In this appendix, we provide a detailed proof of \eqref{eq: sum_ineq} in \cref{sec:3}. The argument follows the proof strategy in \cite[Appendix A]{Ha_Zhang_CS_continuous_transition}.
\begin{lemma}\label{lem:reindex}
Consider the scalar sequence $\{ \|\Delta^x(n) \|_F\}_{n\in\N}$ given via solutions to the discrete MT model~\eqref{eq: discrete_MT}.
Assume there exists a positive constant $n^\infty = n^{\infty}(M) > 0$ such that 
    \begin{equation}\label{eq: assump-appendix}
        \|\Delta^x(n) \|_F \leq M \quad \forall~n < n ^{\infty} \quad \text{ and } \quad \|\Delta^x(n ^{\infty}) \|_F > M\,.
    \end{equation} 
Then it is possible to reindex the set
\[ \{\|\Delta^x(n)\|_F:\|\Delta^x(n)\|_F\geq \|\Delta^x(0)\|_F\}_{n=0}^{n^\infty} \]
into $\{ \|\Delta^x_q \|_F\}_{q = 0}^{K}$ so that it satisfies 
\[ \|\Delta^x(0)\|_F = \|\Delta^x_0\|_F, \quad \|\Delta^x(n^{\infty})\|_F = \|\Delta^x_K\|_F, \quad \|\Delta^x_q\|_F < \|\Delta^x_{q +1}\|_F. \]
and
\[ \sum_{q=0}^{K-1} \left(\|\Delta^x_{q+1}\|_F - \|\Delta^x_{q}\|_F \right)\psi(\|\Delta^x_q\|_F) \leq \sum_{n=0}^{n^\infty-1} \left(\|\Delta^x(n+1)\|_F-\|\Delta^x(n)\|_F\right) \psi(\|\Delta^x(n)\|_F).
\]
\end{lemma}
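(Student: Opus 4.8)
The plan is to exploit the explicit affine form $\psi(s) = 1 - N\phiLip\, s$ to reduce the claimed inequality to an exact algebraic identity plus one purely combinatorial estimate. Write $d_n := \|\Delta^x(n)\|_F$ and $D_q := \|\Delta^x_q\|_F$, and set $c := N\phiLip > 0$ so that $\psi(s) = 1 - cs$; denote by $R$ and $L$ the right- and left-hand sides of the asserted inequality. Observe first that $D_0 = d_0$ and that, since $d_{n^\infty} > M \ge d_n$ for every $n < n^\infty$ by \eqref{eq: assump-appendix}, the value $D_K = d_{n^\infty}$ is the strict maximum of the reindexed set; both chains $(d_n)$ and $(D_q)$ therefore share the endpoints $d_0 = D_0$ and $d_{n^\infty} = D_K$.

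First I would substitute $\psi(d_n) = 1 - cd_n$ and use the elementary identity $(u-v)v = \tfrac12(u^2-v^2) - \tfrac12(u-v)^2$ termwise. Applying it to both $R$ and $L$ and telescoping, the linear contribution $D_K - D_0$ and the quadratic endpoint contribution $\tfrac{c}{2}(D_K^2 - D_0^2)$ are identical for the two sums and cancel, leaving the exact identity
\[
R - L = \frac{c}{2}\left(\sum_{n=0}^{n^\infty-1}(d_{n+1}-d_n)^2 - \sum_{q=0}^{K-1}(D_{q+1}-D_q)^2\right).
\]
Since $c>0$, the lemma reduces to showing that the monotone rearrangement has no larger squared-increment sum, namely $\sum_q (D_{q+1}-D_q)^2 \le \sum_n (d_{n+1}-d_n)^2$.

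The core of the argument is this squared-increment inequality, which I would establish by counting upcrossings of levels. By construction the open gaps $(D_q, D_{q+1})$ contain no value of the sequence, so the midpoint $c_q := \tfrac12(D_q + D_{q+1})$ is attained by no $d_n$ and satisfies $d_0 < c_q < d_{n^\infty}$; since the path starts strictly below $c_q$ and ends strictly above it, its net number of upcrossings of $c_q$ equals $+1$, so $c_q$ is upcrossed at least once. Any single step upcrossing $c_q$ must jump from $d_n \le D_q$ to $d_{n+1}\ge D_{q+1}$ (no values lie inside the gap); more generally the gaps upcrossed by a fixed upward step $n$, collected in an index set $U_n$, form a consecutive block contained in $[d_n,d_{n+1}]$, whence by super-additivity of squares $\bigl(\sum_p a_p\bigr)^2 \ge \sum_p a_p^2$ for $a_p\ge 0$,
\[
\sum_{q\in U_n}(D_{q+1}-D_q)^2 \le \Bigl(\sum_{q\in U_n}(D_{q+1}-D_q)\Bigr)^2 \le (d_{n+1}-d_n)^2.
\]
Letting $m_q\ge 1$ be the number of upcrossings of $c_q$ and exchanging the order of summation, $\sum_q (D_{q+1}-D_q)^2 \le \sum_q m_q (D_{q+1}-D_q)^2 = \sum_n \sum_{q\in U_n}(D_{q+1}-D_q)^2 \le \sum_n (d_{n+1}-d_n)^2$, which closes the proof.

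The main obstacle is precisely this squared-increment inequality: the sequence $(d_n)$ need not be monotone and may even dip below $d_0$, and a single large step may traverse several gaps at once, so gaps cannot be matched to steps one-to-one. The super-additivity observation is exactly what allows a multi-gap step to pay for every gap it crosses, while the endpoint parity (start below, end above each midpoint level) guarantees each gap is crossed upward at least once; the dips below $d_0$ are harmless since only the endpoints $d_0,d_{n^\infty}$ enter the telescoping and extra excursions only enlarge the right-hand sum. Everything else is routine telescoping.
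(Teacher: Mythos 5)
Your proof is correct, and it takes a genuinely different route from the paper's. The paper's Appendix~A argument (following the strategy of Ha--Zhang) uses only that $\psi$ is nonincreasing: it first \emph{refines} the time grid so that no step flies over an intermediate value of the sequence --- an operation shown to only decrease the right-hand sum precisely because $\psi$ is decreasing --- and then, after splitting indices into $\mathcal{N}^\pm$, pairs upcrossings and downcrossings of each elementary gap via the surjections $\gamma,\gamma'$, using that upcrossings outnumber downcrossings by exactly one on $\mathcal{N}^+$ (and balance on $\mathcal{N}^-$), each matched up--down pair contributing the nonnegative quantity $\left(\|\Delta^x_q\|_F-\|\Delta^x_{q+1}\|_F\right)\left(\psi(\|\Delta^x_{q+1}\|_F)-\psi(\|\Delta^x_q\|_F)\right)\ge 0$. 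You instead exploit the specific affine form $\psi(s)=1-N\phiLip\, s$ actually used in the paper: in your notation, the identity $(u-v)v=\tfrac12(u^2-v^2)-\tfrac12(u-v)^2$ telescopes both sums, the shared endpoints $D_0=d_0$ and $D_K=d_{n^\infty}$ (the latter being the strict maximum by \eqref{eq: assump-appendix}, since $d_{n^\infty}>M\ge d_n$ for $n<n^\infty$) cancel the linear and quadratic boundary terms, and the lemma becomes \emph{equivalent} (as $c=N\phiLip>0$ under \cref{as: a}) to the squared-increment comparison $\sum_q(D_{q+1}-D_q)^2\le\sum_n(d_{n+1}-d_n)^2$ via the exact defect identity $R-L=\tfrac{c}{2}\bigl(\sum_n(d_{n+1}-d_n)^2-\sum_q(D_{q+1}-D_q)^2\bigr)$. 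Your proof of that comparison is sound: since the open gaps $(D_q,D_{q+1})$ contain no sequence values (any value $\ge d_0$ is by construction one of the $D$'s, and values $<d_0$ lie below $D_q$), each midpoint level is upcrossed at least once, an upcrossing step must straddle the entire gap, the gaps crossed by one upward step are disjoint subintervals of $[d_n,d_{n+1}]$, and superadditivity of squares then lets a multi-gap step pay for all gaps it crosses; dips below $d_0$ and repeated crossings only enlarge the right-hand side. What each approach buys: yours is shorter, dispenses with the refinement/relabeling bookkeeping of the paper's Step~A, and yields a quantitative identity for the slack; the paper's argument is more robust, since it survives replacing the affine $\psi$ by an arbitrary nonincreasing function (as one would want when mimicking the continuous theory with a general communication weight $a$), whereas your telescoping is tied to $\psi$ being affine.
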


\begin{proof}
Recall that $\psi(s):= 1 - \phiLip N s$ is a decreasing function. Now, we claim that the sequence $\{\|\Delta^x_q\|_F\}_{q=0}^K$ is a monotone increasing sequence, if necessary, by re-indexing the set 
\[ \Big \{\|\Delta^x(n)\|_F:\|\Delta^x(n)\|_F\geq \|\Delta^x(0)\|_F \Big \}_{n=0}^{n^\infty}.\] In other words, we have
\begin{equation} \label{App-A}
\|\Delta^x(0)\|_F = \|\Delta^x_0\|_F < \|\Delta^x_1\|_F < \cdots < \|\Delta^x_K\|_F = \|\Delta^x(n^\infty)\|_F.
\end{equation}
In what follows, we verify \eqref{App-A} in two steps. \newline

\noindent $\bullet$ \textbf{Step A} (Refinement):~Note that the finite sequence $\{\|\Delta^x(n)\|_F\}_{n=0}^{n^\infty}$ might not be monotonic. So when we move from $\|\Delta^x(n)\|_F$ to $\|\Delta^x(n+1)\|_F$, we might fly over other values in $\{\|\Delta^x(n)\|_F\}_{n=0}^{n^\infty}$. To get rid of such situations, we refine the discrete-time steps $\{0,h,\ldots, n^\infty h\}$ as follows. \newline

Suppose there exists some $ n' <n^\infty$ such that 
\begin{align}
\begin{aligned} \label{A-1}
& \mbox{Either}~~\|\Delta^x(n)\|_F < \|\Delta^x(n')\|_F < \|\Delta^x(n+1)\|_F \\
& \hspace{1.5cm}  \text{or}\quad \|\Delta^x(n)\|_F > \|\Delta^x(n')\|_F > \|\Delta^x(n+1)\|_F.
\end{aligned}
\end{align}
Then we add a new grid point $\overline{n}\in(n, n+1)$ such that 
\[ \|\Delta^x(\overline{n})\|_F = \|\Delta^x(n')\|_F. \]
and remove $n'$ from the previous index set. Note that we do not require that $\|\Delta^x(\overline{n})\|_F$ corresponds to the shape discrepancy of a set of particle solutions $\{x_i(\overline{n})\}_{i=1}^N$ of the discrete MT model. Here, we are simply relabeling a set of scalars.
We repeat this process until the relation \eqref{A-1} does not happen for a new index set $\{l\}_0^I$ with 
\[ 0\leq l \leq I, \quad  \|\Delta^x(0)\|_F = \|\Delta^x_0\|_F \quad \mbox{and} \quad \|\Delta^x(I)\|_F = \|\Delta^x(n^\infty)\|_F. \]
Note that $\{\|\Delta^x(n)\|_F\}_{n=0}^{n^\infty}$ and $\{\|\Delta^x(l)\|_F\}_{l=0}^I$ are identical as sets. For the new index set $\{l\}_0^I$ and corresponding sequence $\{\|\Delta^x(l)\|_F\}_{l=0}^{I}$, we claim
\begin{align}\label{A-2}
\begin{aligned} 
	&\sum_{l=0}^{I-1} \left(\|\Delta^x(l+1)\|_F-\|\Delta^x(l)\|_F\right) \psi(\|\Delta^x(l)\|_F)\\
	&\hspace{1cm}\leq
	\sum_{n=0}^{n^\infty-1} \left(\|\Delta^x(n+1)\|_F-\|\Delta^x(n)\|_F\right) \psi(\|\Delta^x(n)\|_F).
\end{aligned}
\end{align}
If we have a new grid point $\overline{n}\in(n, n+1)$, then we have
\begin{align*}
	&\left(\|\Delta^x(n+1)\|_F - \|\Delta^x(\overline{n})\|_F\right) \psi (\|\Delta^x(\overline{n})\|_F) + 
	\left(\|\Delta^x(\overline{n})\|_F - \|\Delta^x(n)\|_F\right) \psi (\|\Delta^x(n)\|_F)\\
	&\hspace{1cm}\leq \left(\|\Delta^x(n+1)\|_F - \|\Delta^x(\overline{n})\|_F\right) \psi (\|\Delta^x(n)\|_F) + 
	\left(\|\Delta^x(\overline{n})\|_F - \|\Delta^x(n)\|_F\right) \psi (\|\Delta^x(n)\|_F)\\
	&\hspace{1cm}=\left(\|\Delta^x(n+1)\|_F-\|\Delta^x(n)\|_F\right) \psi(\|\Delta^x(n)\|_F).
\end{align*}
Here we used that $\psi$ is a decreasing function. We can conclude for \eqref{A-2} using this observation several times. It then remains to show that
\begin{align}\label{A-remaingoal}
\sum_{q=0}^{K-1} \left(\|\Delta^x_{q+1}\|_F - \|\Delta^x_{q}\|_F \right)\psi(\|\Delta^x_q\|_F) 
\leq \sum_{l=0}^{I-1} \left(\|\Delta^x(l+1)\|_F-\|\Delta^x(l)\|_F\right) \psi(\|\Delta^x(l)\|_F)\,.
\end{align}
We split the set $\{l\}_0^I$ into two subsets as follows:
\[
 \mathcal{N}^+ :=\{l:\|\Delta^x(l)\|_F\geq \|\Delta^x(0)\|_F\}, \quad  \mathcal{N}^- :=\{l:\|\Delta^x(l)\|_F \leq \|\Delta^x(0)\|_F\}.
\]		
By the construction of $\{l\}_0^I$, two consecutive indices $l$ and $l+1$ should belong to either $\mathcal{N}^+$ or $\mathcal{N}^-$, except when 
\[ \|\Delta^x(l)\|_F = \|\Delta^x(l+1)\|_F = \|\Delta^x(0)\|_F. \]
 Note that the latter is of little importance since in this case
\[\left(\|\Delta^x(l+1)\|_F-\|\Delta^x(l)\|_F\right) \psi(\|\Delta^x(l)\|_F)=0.\]
Therefore, the summation in the left-hand side of \eqref{A-2} can be rewritten as 
\begin{align}
\begin{aligned} \label{A-3}
	&\sum_{l=0}^{I-1} \left(\|\Delta^x(l+1)\|_F-\|\Delta^x(l)\|_F\right) \psi(\|\Delta^x(l)\|_F) \\
	&\hspace{1cm} = \left(\sum_{l,(l+1)\in\mathcal{N}^+}+\sum_{l,(l+1)\in\mathcal{N}^-}\right)  \left(\|\Delta^x(l+1)\|_F-\|\Delta^x(l)\|_F\right) \psi(\|\Delta^x(l)\|_F). 
\end{aligned}
\end{align}

\vspace{0.5cm}

\noindent $\bullet$ \textbf{Step B} (Estimate):~We denote $\mathcal{T}:=\{\|\Delta^x_q\|_F\}_{q=0}^K$. 
Then, we can define a surjective function $\alpha: l \in {\mathcal N}^+~~\mapsto~~\|\Delta^x_q\|_F \in  \mathcal{T}$ and a bijective function $\beta:~
\|\Delta^x_q\|_F \in  \mathcal{T}~~\mapsto~~q  \in \{0,\dots K\}$.
Now we define $\gamma:\mathcal{N}^+ \to \{q\}_0^K$ as $\gamma:=\beta \circ \alpha$, the composition of $\beta$ and $\alpha$, which is obviously surjective. From the construction of the sequence $\{l\}_0^I$, we know that $\|\Delta^x(l)\|_F$ and $\|\Delta^x(l+1)\|_F$ are two adjacent elements in $\mathcal{T}=\{\|\Delta^x_q\|_F\}_{q=0}^K$. In simpler terms, we have
\[\gamma(l+1) - \gamma(l) = \pm1.\]
Therefore, we denote the pre-image of $\gamma$ as $\gamma^{-1}$, and we can rewrite the first summation of \eqref{A-3} as 
\begin{align}\label{A-4}
	\begin{aligned}
		&\sum_{l,(l+1)\in\mathcal{N}^+} \left(\|\Delta^x(l+1)\|_F - \|\Delta^x(l)\|_F\right) \psi(\|\Delta^x(l)\|_F)\\
		&\hspace{0.5cm} =\sum_{q=0}^{K-1} \left(\sum_{\substack{l\in \gamma^{-1}(q)\\l+1\in\gamma^{-1}(q+1)}} + \sum_{\substack{l+1\in \gamma^{-1}(q)\\l\in\gamma^{-1}(q+1)}}\right)\left(\|\Delta^x(l+1)\|_F - \|\Delta^x(l)\|_F\right) \psi(\|\Delta^x(l)\|_F).
	\end{aligned}
\end{align}
Next, we note that both $\{\|\Delta^x(l)\|_F\}_{l=0}^I$ and $\{\|\Delta^x_q\|_F\}_{q=0}^K$ construct a path from $\|\Delta^x(0)\|_F$ to $\|\Delta^x(n^\infty)\|_F$ with 
\begin{equation*}
	\begin{cases}
	 \displaystyle	\|\Delta^x(l+1)\|_F - \|\Delta^x(l)\|_F = \|\Delta^x_{q+1}\|_F - \|\Delta^x_{q}\|_F, &l\in\gamma^{-1}(q) \text{ and }l+1\in\gamma^{-1}(q+1),\\
	\displaystyle	\|\Delta^x(l+1)\|_F - \|\Delta^x(l)\|_F = -\left(\|\Delta^x_{q+1}\|_F - \|\Delta^x_{q}\|_F\right), &l\in\gamma^{-1}(q+1)\text{ and }l+1\in\gamma^{-1}(q).
	\end{cases}
\end{equation*}
In other words, each path from $\|\Delta^x(l)\|_F$ to $\|\Delta^x(l+1)\|_F$ corresponds to a path from $\|\Delta^x_{q}\|_F$ to $\|\Delta^x_{q+1}\|_F$ or from $\|\Delta^x_{q+1}\|_F$ to $\|\Delta^x_{q}\|_F$. Since $\|\Delta^x(n^\infty)\|_F > \|\Delta^x(0)\|_F$ we can check that for fixed $q$, 
\begin{equation*}
	\left|\{l:l\in\gamma^{-1}(q)~\text{and } l+1\in\gamma^{-1}(q+1)\}\right| = \left|\{l:l\in\gamma^{-1}(q+1)~\text{and } l+1\in\gamma^{-1}(q)\}\right| + 1,
\end{equation*}
where $|\cdot|$ gives the cardinality of the set. Therefore, for fixed $q$, we obtain 
\begin{align}
	% \begin{aligned} 
	&\left(\sum_{\substack{l\in \gamma^{-1}(q)\\l+1\in\gamma^{-1}(q+1)}} + \sum_{\substack{l+1\in \gamma^{-1}(q)\\l\in\gamma^{-1}(q+1)}}\right)\left(\|\Delta^x(l+1)\|_F - \|\Delta^x(l)\|_F\right) \psi(\|\Delta^x(l)\|_F)\notag\\
	&\hspace{0.5cm}= \sum_{\substack{l+1\in \gamma^{-1}(q)\\l\in\gamma^{-1}(q+1)}} \left(\|\Delta^x(l+1)\|_F - \|\Delta^x(l)\|_F\right) \Big(\psi(\|\Delta^x(l)\|_F) - \psi(\|\Delta^x(l+1)\|_F)\Big)\notag\\
	&\hspace{2.5cm} + (\|\Delta^x_{q+1}\|_F-\|\Delta^x_q\|_F)\psi(\|\Delta^x_q\|_F)\notag\\
	&\hspace{0.5cm}= \sum_{\substack{l+1\in \gamma^{-1}(q)\\l\in\gamma^{-1}(q+1)}} \left(\|\Delta^x_q\|_F - \|\Delta^x_{q+1}\|_F\right) \Big(\psi(\|\Delta^x_{q+1}\|_F) - \psi(\|\Delta^x_q\|_F)\Big)\notag\\
	&\hspace{2.5cm} + (\|\Delta^x_{q+1}\|_F-\|\Delta^x_q\|_F)\psi(\|\Delta^x_q\|_F)\notag\\
	&\hspace{0.5cm} \geq (\|\Delta^x_{q+1}\|_F-\|\Delta^x_q\|_F)\psi(\|\Delta^x_q\|_F)\,,\label{A-5}
\end{align}
where the last inequality follows from the decreasing property of $\psi$. 
Now we combine \eqref{A-4} and \eqref{A-5} to get
\begin{align} \label{A-6}
	\begin{aligned}
	&\sum_{l,(l+1)\in\mathcal{N}^+} \left(\|\Delta^x(l+1)\|_F - \|\Delta^x(l)\|_F\right) \psi(\|\Delta^x(l)\|_F)\\
	&\hspace{1cm} =\sum_{q=0}^{K-1} \left(\sum_{\substack{l\in \gamma^{-1}(q)\\l+1\in\gamma^{-1}(q+1)}} + \sum_{\substack{l+1\in \gamma^{-1}(q)\\l\in\gamma^{-1}(q+1)}}\right)\left(\|\Delta^x(l+1)\|_F - \|\Delta^x(l)\|_F\right) \psi(\|\Delta^x(l)\|_F)\\
	&\hspace{1cm} \geq\sum_{q=0}^{K-1} (\|\Delta^x_{q+1}\|_F-\|\Delta^x_q\|_F)\psi(\|\Delta^x_q\|_F)\,.
	\end{aligned}
\end{align}
Finally, we use a similar argument as above to derive 
\begin{equation} \label{A-7}
	\sum_{l,(l+1)\in\mathcal{N}^-} \left(\|\Delta^x(l+1)\|_F - \|\Delta^x(l)\|_F\right) \psi(\|\Delta^x(l)\|_F)\geq 0.
\end{equation}
More precisely, reindexing the set
\[ \{\|\Delta^x(l)\|_F:\|\Delta^x(l)\|_F\leq \|\Delta^x(0)\|_F\}_{l=0}^{I} \]
into a monotone increasing sequence $\mathcal{T}':=\{\|\Delta^x_p\|_F\}_{p=0}^{K'}$. Define $\alpha': l \in {\mathcal N}^-~~\mapsto~~\|\Delta^x_p\|_F \in  \mathcal{T}'$, $\beta':
\|\Delta^x_p\|_F \in  \mathcal{T}'~~\mapsto~~p  \in \{0,\dots K'\}$ and $\gamma':\mathcal{N}^- \to \{p\}_0^{K'}$ as $\gamma' := \beta'\circ\alpha'$. Since $\|\Delta^x(n^\infty)\|_F>\|\Delta^x(0)\|_F$, for fixed $p$, 
\begin{equation*}
	\left|\{l:l\in\gamma'^{-1}(p)~\text{and } l+1\in\gamma'^{-1}(p+1)\}\right| = \left|\{l:l\in\gamma'^{-1}(p+1)~\text{and } l+1\in\gamma'^{-1}(p)\}\right|.
\end{equation*}
Therefore, for fixed $p$, we have
\begin{align*}
	&\left(\sum_{\substack{l\in \gamma'^{-1}(p)\\l+1\in\gamma'^{-1}(p+1)}} + \sum_{\substack{l+1\in \gamma'^{-1}(p)\\l\in\gamma'^{-1}(p+1)}}\right)\left(\|\Delta^x(l+1)\|_F - \|\Delta^x(l)\|_F\right) \psi(\|\Delta^x(l)\|_F)\notag\\
	&\hspace{0.5cm}= \sum_{\substack{l+1\in \gamma'^{-1}(p)\\l\in\gamma'^{-1}(p+1)}} \left(\|\Delta^x(l+1)\|_F - \|\Delta^x(l)\|_F\right) \Big(\psi(\|\Delta^x(l)\|_F) - \psi(\|\Delta^x(l+1)\|_F)\Big)\notag\\
	&\hspace{0.5cm}= \sum_{\substack{l+1\in \gamma'^{-1}(p)\\l\in\gamma'^{-1}(p+1)}} \left(\|\Delta^x_p\|_F - \|\Delta^x_{p+1}\|_F\right) \Big(\psi(\|\Delta^x_{p+1}\|_F) - \psi(\|\Delta^x_p\|_F)\Big)\notag\\
	&\hspace{0.5cm} \geq 0,
\end{align*}
where we used the decreasing property of $\psi$ at the last inequality. This leads to
\begin{align*}
	\begin{aligned}
	&\sum_{l,(l+1)\in\mathcal{N}^-} \left(\|\Delta^x(l+1)\|_F - \|\Delta^x(l)\|_F\right) \psi(\|\Delta^x(l)\|_F)\\
	&\hspace{1cm} =\sum_{p=0}^{K'-1} \left(\sum_{\substack{l\in \gamma'^{-1}(p)\\l+1\in\gamma'^{-1}(p+1)}} + \sum_{\substack{l+1\in \gamma'^{-1}(p)\\l\in\gamma'^{-1}(p+1)}}\right)\left(\|\Delta^x(l+1)\|_F - \|\Delta^x(l)\|_F\right) \psi(\|\Delta^x(l)\|_F)\\
	&\hspace{1cm} \geq0\,.
	\end{aligned}
\end{align*}
Combining \eqref{A-3} with \eqref{A-6} and \eqref{A-7} shows \eqref{A-remaingoal}. Then the final estimate in \cref{lem:reindex} follows from \eqref{A-2} and \eqref{A-remaingoal}.
\end{proof}

\printbibliography

\end{document}